\documentclass[reqno]{amsart}
\usepackage{amsfonts,latexsym,amsmath, amssymb}
\usepackage{mathrsfs,enumerate}
\usepackage{bm}
\usepackage{color}
\usepackage{amsmath}
\usepackage{cite}
\usepackage{graphicx,enumerate}

\usepackage{enumitem}

\newcommand{\bea}{\begin{eqnarray}}
\newcommand{\eea}{\end{eqnarray}}
\def\beaa{\begin{eqnarray*}}
\def\eeaa{\end{eqnarray*}}
\def\ba{\begin{array}}
\def\ea{\end{array}}
\def\be#1{\begin{equation} \label{#1}}
\def \eeq{\end{equation}}

\def\be{{\beta}}

\def\AA{{\mathcal A}}

\def\FF{{\mathcal F}}

\def\LL{{\mathcal L}}
\def\GG{{\mathcal G}}

\def\AA{{\mathcal A}}

\def\R{{\mathbb{R}}}
\def\C{{\mathbb{C}}}

\def\N{{\mathbb N}}

\def\sgH2{\sigma_H^2}
\def\sgL2{\sigma_L^2}

\newtheorem{theorem}{Theorem}[section]
\newtheorem{lemma}[theorem]{Lemma}

\newtheorem{corollary}[theorem]{Corollary}
\newtheorem{definition}[theorem]{Definition}
\newtheorem{remark}[theorem]{Remark}

\textheight 8.8in \textwidth 5.8in \voffset -0.15in \hoffset -0.4in

\numberwithin{equation}{section}

\allowdisplaybreaks
\begin{document}

\title[Stability in a credit rating migration problem]{Stability of traveling wave solutions in a credit rating migration Free Boundary Problem}
\author[C.-M. Brauner]{Claude-Michel Brauner}
\address{Institut de Math\unexpanded{\'e}matiques de Bordeaux, Universit\unexpanded{\'e} de Bordeaux, 33405 Talence, France}
\email{claude-michel.brauner@u-bordeaux.fr}
\author[Y. Dong]{Yuchao Dong}
\address{School of Mathematical Sciences, Tongji University, Shanghai 200092, China}
\email{ycdong@tongji.edu.cn}
\author[J. Liang]{Jin Liang}
\address{School of Mathematical Sciences, Tongji University, Shanghai 200092, China}
\email{liang\_jin@tongji.edu.cn}
\author[L. Lorenzi]{Luca Lorenzi}
\address{Plesso di Matematica, Dipartimento di Scienze Matematiche, Fisiche e Informatiche, Universit\`a di Parma, Parco Area delle Scienze 53/A, I-43124 Parma, Italy }
\email{luca.lorenzi@unipr.it}

\keywords{Free boundary problems; stability; traveling waves;  fully nonlinear parabolic problems; credit rating migration}
\subjclass[2020]{Primary: 35R35, 35B25; Secondary: 35C07, 91G20}

\begin{abstract}
In this paper, we study the stability of traveling wave solutions arising from a credit rating migration problem with a free boundary, After some transformations, we turn the Free Boundary Problem into a fully nonlinear parabolic problem on a fixed domain and establish a rigorous stability analysis of the equilibrium in an exponentially weighted function space. It implies the convergence of the discounted value of bonds that stands as an attenuated traveling wave solution. 
\end{abstract}
\maketitle
\section{Introduction}

\subsection{Financial motivation}
The evolution and globalization of the financial market have amplified the significance of credit risks, often elevating them to pivotal roles within the market dynamics. This influence became unmistakably evident during the 2008 financial crisis and the more recent wave of defaults among small banks in the United States. These instances serve as poignant illustrations of the profound impact that credit risks can exert on the financial landscape. Consequently, the imperative of effectively managing credit risk has surged in importance over time, underlining the critical need for robust risk mitigation strategies and practices.

Credit risks encompass two primary facets: default risk and credit rating migration risk. Default risk signifies the likelihood that a company will fail to meet its debt obligations, while credit rating migration risk pertains to alterations in a company's credit rating triggered by shifts in its financial condition, typically impacting the company's equities and the potential risks associated with related financial contracts. The quantification of credit risk through mathematical methodologies falls within two distinct frameworks: structural models and intensity models. The intensity model posits that credit risk emanates from external factors, often modeled using Markov chains, as referenced in \cite{Ma}. In this context, the occurrences of default and/or credit rating migrations hinge on an external transition intensity, as elucidated by Jarrow and Longstaff in \cite{JLT,JT}. However, in practical markets, it is evident that a company's prevailing financial condition significantly influences the probabilities of default and credit rating migrations. To incorporate this endogenous factor, structural models emerge as pivotal tools for credit risk modeling, with historical roots tracing back to Merton's seminal work in 1974, as outlined in \cite{M}. Within such models, credit rating migrations and defaults are linked to the firm's asset value and its outstanding obligations.

In 2015, Liang and Zeng \cite{LZ} first study the pricing problem of the corporate bond with credit rating migration risk, where a predetermined migration threshold is given to divide asset value into high and low rating regions, in which the asset value follows different stochastic processes. Hu, Liang, and Wu \cite{HLW} further develop this model, where the migration boundary is a free boundary governed by a ratio of the firm's asset value and debt. Furthermore, one contribution of their work is to show that this problem admits an asymptotic traveling wave. Dong, Liang and Brauner \cite{DLB} further extend this framework to include default in the model. They also find the asymptotic traveling wave. For some other works relative to this model, the readers can see the survey paper \cite{CHLY}

In previous research, the asymptotic behavior takes an important position and it also brings up stability problem. Besides its own mathematical interest, the study of stability is also of financial importance. As the traveling wave has a closed form, stability means the approximation of the original solution by the traveling wave. In \cite{HLW}, the asymptotic traveling wave equation offers insights into pricing a specific debt instrument subject to credit rating risk. This pricing methodology hinges on evaluating the discounted value of a company's assets. Given that \cite{HLW} focuses on a particular type of debt, a natural question is whether this pricing rule is suitable for other kinds of long-term debts and robust when confronted with slight variations in model coefficients. From a mathematical point of view, this question is closely related  to the stability of the traveling wave solutions. If stability can be established, it signifies the existence of a robust and general pricing framework with credit risk, thereby expanding its utility beyond the specific debt instrument initially considered in the study.
\subsection{The model}
Throughout the paper we will adopt the following notations as far as financial parameters are concerned:
\begin{itemize}
\item $\delta>0$ is the risk discount rate;
\item $0<\gamma<1$ is the threshold proportion of the debt and value of the firm's rating;
\item $r>0$ is the risk free interest rate;
\item $\sigma_H>0$ and $\sigma_L>0$ represent the volatilities of the firm under the high and low credit grades, respectively.
\end{itemize}
They verify the condition (see \cite[(4.12)]{LWH16}),
\begin{eqnarray}\label{admissible}
\frac{1}{2}\sgH2 <\delta< \frac{1}{2}\sgL2,
\end{eqnarray}
which hereafter defines the domain $(\AA)$ of financially admissible parameters.

Our starting point is the Free Boundary Problem for the discounted value of the bond in finance $\phi$ (see \cite[Section 3]{LWH16})
\begin{align}\label{equ_phi}
\frac{\partial \phi}{\partial t}(t,x) -\frac{1}{2}\sigma^2\frac{\partial^2 \phi}{\partial x^2}(t,x) -\left(r-\frac{1}{2}\sigma^2\right)\frac{\partial \phi}{\partial x}(t,x) +r\phi(t,x)=0,
\end{align}
where $x\in\R$ is the spatial coordinate and $t$ denotes the time. In the equation \eqref{equ_phi},
\begin{align*}
\sigma= \sigma(\phi,x)= \sigma_H +(\sigma_L-\sigma_H)H(\phi-\gamma e^{x-\delta t}),
\end{align*}
where $H$ is the Heaviside function.

The free boundary (or free interface) $s(t)$ and the interface conditions are
 \begin{equation}
\left\{
\begin{aligned}\label{interface_phi}
 &\big[\phi(t,\cdot)\big]_{x=s(t)} = \bigg [\frac{\partial \phi}{\partial x}(t,\cdot)\bigg ]_{x=s(t)} =0, \\
&\phi(t,s(t))= \gamma e^{s(t)-\delta t},
 \end{aligned}
 \right.
 \end{equation}
 where $\big[f\big]_{\xi=\xi_0} =f(\xi_0^+)-f(\xi_0^-)$.

In order to establish convergence, as $t$ tends to $+\infty$, of $\phi(t,x)$ to a traveling wave solution, it is convenient to make the following change of dependent variable (see \cite[Section 3]{LWH16})
\begin{align}
u(t,x)= e^{rt}\phi(t,x).
\label{u-phi}
\end{align}
From \eqref{equ_phi} and \eqref{interface_phi} it follows that $u$ satisfies the differential equation
  \begin{align}\label{equ_u}
  \frac{\partial u}{\partial t}(t,x) -\frac{1}{2}\sigma^2\frac{\partial^2 u}{\partial x^2}(t,x) -\left(r-\frac{1}{2}\sigma^2\right)\frac{\partial u}{\partial x}(t,x)=0,
  \end{align}
   with interface conditions at $x=s(t)$:
   \begin{equation}
\left\{
\begin{aligned}\label{interface_u}
&\big[u(t)\big]_{x=s(t)} = \bigg [\frac{\partial u}{\partial x}(t)\bigg ]_{x=s(t)} =0, \\
&u(t,s(t))= \gamma e^{s(t)+(r-\delta)t},
\end{aligned}
\right.
\end{equation}
  In addition, the following boundary conditions hold at $\pm\infty$:
\begin{eqnarray}
\lim_{x \to -\infty } u(t,x) = 0, \quad  \lim_{x \to +\infty } u(t,x) = 1. \label{BC}
\end{eqnarray}

 We look for a traveling wave solution $K(x+ct)$ of system \eqref{equ_u}--\eqref{BC} which propagates at velocity $c=r-\delta$. In the moving coordinate (see \cite[Section 3]{LWH16})
   \begin{align*}
  \xi=x+ct,\qquad\;\, \eta(t)= s(t)+ct,
  \end{align*}
 the system for $u=u(t,\xi)$ and free boundary $\eta=\eta(t)$ reads
 \begin{equation}\label{S1}
 \begin{aligned}
 &\frac{\partial u}{\partial t}(t,\xi)  = \frac{1}{2}\sgL2 \frac{\partial^2 u}{\partial \xi^2}(t,\xi)  + \left(\delta -
 \frac{1}{2}\sgL2\right)\frac{\partial u}{\partial \xi}(t,\xi) , \quad \xi < \eta(t), \\
 &\frac{\partial u}{\partial t}(t,\xi) = \frac{1}{2}\sgH2 \frac{\partial^2 u}{\partial \xi^2}(t,\xi)  + \left(\delta
 - \frac{1}{2}\sgH2\right)\frac{\partial u}{\partial \xi}(t,\xi) , \quad \xi > \eta(t).
 \end{aligned}
 \end{equation}
 From \eqref{interface_u} we derive the following free interface conditions at $\xi=\eta(t)$:
 \begin{eqnarray}
\left\{
\begin{aligned}
&\big[u(t,\cdot)\big]_{\xi=\eta(t)} = \bigg [\frac{\partial u}{\partial \xi}(t,\cdot)\bigg ]_{\xi=\eta(t)} =0,\\
& u(t,\eta(t))= \gamma e^{\eta(t)},
\label{S4}
\end{aligned}
\right.
\label{S3-S4}
\end{eqnarray}
together with the boundary conditions:
 \begin{eqnarray}
 \lim_{\xi \to -\infty } u(t,\xi) = 0, \quad  \lim_{\xi \to +\infty } u(t,\xi) = 1. \label{S5}
 \end{eqnarray}

System \eqref{S1}-\eqref{S5} admits a unique steady solution $(K,\eta^*)$ which verifies the following system (see \cite[Lemma 4.7]{LWH16})
\begin{eqnarray*}
\left\{
\begin{array}{ll}
\displaystyle\frac{1}{2}\sgL2 \frac{d^2K}{d\xi^2} + \bigg (\delta - \frac{1}{2}\sgL2\bigg )\frac{dK}{d\xi} =0, \quad & \xi < \eta^*, \\[3mm]
\displaystyle\frac{1}{2}\sgH2 \frac{d^2K}{d\xi^2} + \bigg (\delta - \frac{1}{2}\sgH2\bigg )\frac{dK}{d\xi} =0, \quad & \xi > \eta^*,\\[3mm]
\displaystyle [K]_{\xi=\eta^*} = \bigg[\frac{dK}{d\xi}\bigg]_{\xi=\eta^*}=0, \quad K(\eta^*)= \gamma e^{\eta^*}, \\[3mm]
K(-\infty)=0, \quad K(+\infty)=1,
\end{array}
\right.
\end{eqnarray*}
which is defined by
\begin{equation*}
\eta^*=\ln\bigg (\frac{\sigma^2_L(2\delta-\sigma_H^2)}{2\delta\gamma(\sigma_L^2-\sigma_H^2)}\bigg )
\end{equation*}
and
\begin{eqnarray}
K(\xi)=\left\{
\begin{array}{ll}
\displaystyle\gamma e^{\eta^*}\exp\bigg [\bigg (1-\frac{2\delta}{\sigma_L^2}\bigg )(\xi-\eta^*)\bigg ], & \xi\le\eta^*,\\[4mm]
\displaystyle 1-(1-\gamma e^{\eta^*})\exp\bigg [\bigg (1-\frac{2\delta}{\sigma_H^2}\bigg )(\xi-\eta^*)\bigg ],\quad & \xi>\eta^*
\end{array}
\right.
\label{funct-K}
\end{eqnarray}
The convergence of $u(t,\xi)$ to $K(\xi)$ as $t$ tends to $+\infty$ was proved in \cite{LWH16} in the case of a particular initial condition. In this paper, we  prove the nonlinear stability of the traveling wave $K$ for all the admissible values of parameters, using the techniques of  \cite{ABLZ20,BL18,BHL00,BLZ20}
developed for applications in physics, especially in combustion theory. As we have already said, here stability indicates a robust general rule of pricing under credit risk and our mathematical results
confirm the validity of the model of \cite{LWH16}. In other scientific fields, domains of instability are more likely to be identified (see, e.g., \cite{BL21}).

The paper is organized as follows: in Section \ref{linearized_model}, we fix the free interface $\eta(t)$ to $\eta^*$. The new spatial variable is denoted by $y=\xi -\eta(t) + \eta^*$. Then, we consider the perturbations
\begin{eqnarray*}
v(t,y)=u(t,y)-K(y), \qquad\;\, f(t)=\eta(t)-\eta^*.
\end{eqnarray*}
According to the method of \cite{BHL00}, we introduce the ansatz:
\begin{eqnarray*}
v(t,y) =f(t)\frac{dK}{dy}(y) + w(t,y) \end{eqnarray*}
where $w$ is the new unknown. After eliminating $f$ and its derivative, we show that $w$ verifies the problem
\begin{equation}\label{FNLP_intro}
\left\{
\begin{array}{ll}
\displaystyle \frac{d w}{d t}(t)= \LL w(t) + \FF(w(t)),\\[3mm]
B_1 w(t)= \GG(w(t)).
\end{array}
\right.
\end{equation}
The system for $w$ does not contain the free interface which has been eliminated. It is a \textit{fully nonlinear parabolic problem} (see \cite{Lunardi96}) because the nonlinear term contains traces of second-order derivatives with respect to the spatial variable $y$ and, therefore, is of the same order as the linear operator.

The analysis of the stability of the traveling wave solution $K$ is now equivalent to the stability of the trivial solution $w=0$ of problem \eqref{FNLP_intro}.

In Section \ref{analysis}, we prove that the realization $L$ of the operator ${\mathcal L}$ in a suitable weighted space ${\mathscr{X}}$ of continuous functions generates an analytic semigroup. As it is well known,
the introduction of exponentially weighted spaces for proving stability of traveling waves has been widely used since the pioneering work of Sattinger (see \cite{S76}). The spectrum $\sigma(L)$  consists of the union of a halfline, strictly included in $(-\infty, 0)$, and the solutions of the dispersion relation ${\mathcal D}_{\lambda}=0$. Section \ref{dispersion_relation} is devoted to a thorough analysis of the dispersion relation. It turns out that, in the domain of financially admissible parameters, the dispersion relation has no solutions. Therefore, we establish the nonlinear stability of $w=0$ in Section \ref{nonlinear_stability}. The main result is Theorem \ref{stability}.

In the last Section \ref{stability_phi}, we return to the problem for the discounted value of the bond $\phi=e^{-rt} u$. We observe that, when $t\to +\infty$, $\phi(t,x)$ is asymptotically equivalent to the \textit{discounted or attenuated} traveling wave $\Phi(t,x)=e^{-rt} K(x+ct)$. It is worth noting that attenuation of traveling waves is a common phenomenon in physics due to loss of energy; for example, propagating seismic-waves attenuate with time because Earth is not perfectly elastic (see \cite{AVLW20}).

\medskip

\paragraph{\bf{Notation.}} Throughout the paper, the derivatives of functions which depend on a single variable will be denoted using a prime. For example, $K''$ will denote the second-order derivative of the function $K$.
Sometimes, we will find it convenient to write $D_y$ and $D_{yy}$ instead of $\frac{\partial}{\partial y}$ and $\frac{\partial^2}{\partial y^2}$, respectively.
Moreover, for every interval $I\subset\R$ and every $k\in\N\cup\{0\}$, we denote by $C^k_b(I;\C)$ the set of all functions $u:I\to\C$ which are continuously differentiable up to the order $k$ in $I$ and are bounded together with all their derivatives. This space is endowed with its natural norm $\|u\|_{C^k_b(I;\C)}=\sum_{j=0}^k\|u^{(j)}\|_{\infty}$ where $u^{(0)}=u$, and $\|\cdot\|_{\infty}$ denotes the sup-norm over $I$.

\section{The linearized model}
\label{linearized_model}

\subsection{The system with fixed interface}
We set $t'=t$,  $y=\xi -\eta(t) + \eta^*$, i.e. $y=x-s(t) + \eta^*$. When $\xi=\eta(t)$,  $y=\eta^*$, that is, the free interface is now at $y=\eta^*$.

In the coordinates $(t',y)$, omitting the prime for simplicity,  system \eqref{S1}-\eqref{S5} reads for $y \neq \eta^*$:
\begin{eqnarray}
\frac{\partial u}{\partial t} - \dot{\eta}\frac{\partial u}{\partial y} = \frac{1}{2}\sgL2 \frac{\partial^2 u}{\partial y^2} + \left(\delta -
\frac{1}{2}\sgL2\right)\frac{\partial u}{\partial y}, \quad y < \eta^*, \label{S1bis}\\
\frac{\partial u}{\partial t} - \dot{\eta}\frac{\partial u}{\partial y} = \frac{1}{2}\sgH2 \frac{\partial^2 u}{\partial y^2} + \left(\delta -
\frac{1}{2}\sgH2\right)\frac{\partial u}{\partial y}, \quad y > \eta^*,  \label{S2bis}
\end{eqnarray}
where $\displaystyle \dot{\eta}=\frac{d\eta}{dt}$. At the fixed interface $y=\eta^*$, the following interface conditions are satisfied:
\begin{eqnarray*}
\left\{
\begin{aligned}
&\big[u(t,\cdot)\big]_{y=\eta^*} = \bigg[\frac{\partial u}{\partial y}(t,\cdot)\bigg ]_{y=\eta^*} =0,\\
&u(t,\eta^*)= \gamma e^{\eta(t)}. 
\label{S3bis-S4}
\end{aligned}
\right.
\end{eqnarray*}

Next, we define the perturbations
\begin{equation}
v(t,y)=u(t,y)-K(y), \qquad\;\, f(t)=\eta(t)-\eta^*.
\label{form-v-f}
\end{equation}
and write the system verified by the pair $(v,f)$. From \eqref{S1bis} and \eqref{S2bis}, we get the partial differential equations:
\begin{eqnarray}
\frac{\partial v}{\partial t}  = \frac{1}{2}\sgL2 \frac{\partial^2 v}{\partial y^2} + \left(\delta -
\frac{1}{2}\sgL2\right)\frac{\partial v}{\partial y} + f'\left(\frac{\partial v}{\partial y} + \frac{dK}{dy}\right) , \quad y < \eta^*, \label{S1v}\\
\frac{\partial v}{\partial t}  = \frac{1}{2}\sgH2 \frac{\partial^2 v}{\partial y^2} + \left(\delta -
\frac{1}{2}\sgH2\right)\frac{\partial v}{\partial y} + f'\left(\frac{\partial v}{\partial y} + \frac{dK}{dy}\right) , \quad y > \eta^*, \label{S2v}.
\end{eqnarray}
The following interface conditions are satisfied at $y=\eta^*$:
\begin{eqnarray}
\left\{
\begin{aligned}
&\big[v(t,\cdot)\big]_{y=\eta^*} = \bigg [\frac{\partial v}{\partial y}(t,\cdot)\bigg ]_{y=\eta^*} =0,\\
&v(t,\eta^*)=\gamma e^{\eta^*}(e^{f(t)}-1).
\end{aligned}
\right.
\label{S3v-S4v}
\end{eqnarray}
It is readily seen that the term with $f'$ contributes additional difficulties that we will overcome in the next subsections.

\subsection{Ansatz}
According to the method of \cite{BHL00}, we introduce the following ansatz:
\begin{eqnarray}
v(t,y) =f(t)K'(y) + w(t,y),
\label{ansatz}
\end{eqnarray}
where $w$ is the new unknown. We are going to derive a system verified by $w$.

Plugging \eqref{ansatz} into \eqref{S1v}-\eqref{S2v}, the system for $(f,w)$ reads:
\begin{eqnarray}
\frac{\partial w}{\partial t}  = \frac{1}{2}\sgL2 \frac{\partial^2 w}{\partial y^2} + \bigg (\delta -
\frac{1}{2}\sgL2\bigg )\frac{\partial w}{\partial y} + f'\bigg (fK''+\frac{\partial w}{\partial y}\bigg ), \quad y < \eta^*, \label{S1w}\\
\frac{\partial w}{\partial t}  = \frac{1}{2}\sgH2 \frac{\partial^2 w}{\partial y^2} + \bigg (\delta -
\frac{1}{2}\sgH2\bigg )\frac{\partial w}{\partial y} + f'\bigg (fK''+\frac{\partial w}{\partial y}\bigg ), \quad y > \eta^*. \label{S2w}
\end{eqnarray}

The interface conditions at $y=\eta^*$ demand more attention, see \cite{ABLZ20,BLZ20}.
\begin{enumerate}[label=(\arabic*), wide, labelwidth=!, labelindent=0pt]
\item [\rm (i)] As $\displaystyle v(t,y) =f(t)K'(y)+ w(t,y)$, it follows that
\begin{eqnarray*}
\big[v(t,\cdot) \big]_{y=\eta^*} =f(t)[K']_{y=\eta^*} +  \big[w(t,\cdot) \big]_{y=\eta^*}.
\end{eqnarray*}
Hence, $[w(t,\cdot)]_{y=\eta^*}=0$.
 \item [\rm (ii)]
Since  $\displaystyle v(t,\eta^*) =f(t)K'(\eta^*)+ w(t,\eta^*) = \gamma e^{\eta^*}(e^{f(t)}-1)$ and $K'(\eta^*)=\gamma e^{\eta^*}\bigg (1-\dfrac{2}{\delta^2_L}\bigg )$, it comes:
 \begin{eqnarray}
 w(t,\eta^*)= \gamma e^{\eta^*}(e^{f(t)}-1)-f(t)\gamma e^{\eta^*}\bigg (1-\frac{2\delta}{\sgL2}\bigg ).
 \label{interf2}
 \end{eqnarray}
 \item [\rm (iii)] Differentiating  formula \eqref{ansatz} for $y \neq \eta^*$ and, then, taking the limit as $y$ tends to $\eta^*$ from the left and from the right, it follows that
\begin{eqnarray*}
\bigg [\frac{\partial w}{\partial y}(t)\bigg ]_{y=\eta^*} = - f(t)[K'']_{y=\eta^*}.
\end{eqnarray*}
Now, twice differentiating formula \eqref{funct-K} and, then, taking the left- and right-limit at $y=\eta^*$ gives
so that
\begin{eqnarray}
[K'']_{y=\eta^*}=(\gamma e^{\eta^*}-1)\bigg (1-\frac{2\delta}{\sigma^2_H}\bigg )^2-\gamma e^{\eta^*}\bigg (1-\frac{2\delta}{\sigma^2_L}\bigg )^2.
\label{form-1}
\end{eqnarray}
Since
\begin{eqnarray*}
(\gamma e^{\eta^*}-1)\bigg (1-\frac{2\delta}{\sigma^2_H}\bigg )=\gamma e^{\eta^*}\bigg (1-\frac{2\delta}{\sigma^2_L}\bigg ),
\end{eqnarray*}
from formula \eqref{form-1} we deduce that
\begin{eqnarray*}
[K'']_{y=\eta^*}
=\gamma e^{\eta^*}\bigg (1-\frac{2\delta}{\sigma^2_L}\bigg )\frac{2\delta}{\sigma_H^2\sigma_L^2}(\sigma_H^2-\sigma_L^2).
\end{eqnarray*}

Hence,
\begin{eqnarray}
\bigg [\frac{\partial w}{\partial y}(t,\cdot)\bigg ]_{y=\eta^*} = - f(t)[K'']_{y=\eta^*} = f(t)\gamma e^{\eta^*}\bigg (1-\frac{2\delta}{\sgL2}\bigg ) \frac{2\delta}{\sgH2\sgL2}(\sgL2-\sgH2).\label{interf3}
	\end{eqnarray}	
\end{enumerate}

\begin{remark} {\rm In contrast to $u$ and $v$, $w$ is not continuously differentiable at $y=\eta^*$ (see \cite{BHL00}).}
\end{remark}

The next point is to establish a relation between $f(t)$ and $w(t,\eta^*)$.
According to formula \eqref{interf2}, let us define the function $\Phi:\R\to\R$, by setting
\begin{eqnarray*}
\Phi(x)=\gamma e^{\eta^*}\Big(e^x-1-x +\frac{2\delta}{\sgL2}x\Big),\qquad\;\,x\in\R.
	\end{eqnarray*}
Clearly, $\Phi$ vanishes at the origin and $\Phi'(0)= \gamma e^{\eta^*}\dfrac{2\delta}{\sgL2}$.
It is easy to show that the function $\Phi$ is increasing in $[x_0,+\infty)$ and $\Phi(x_0)<0$, where $x_0= \log\Big (1-\frac{2\delta}{\sgL2}\Big )<0$ (note that $1-\frac{2\delta}{\sgL2}>0$ due to condition \eqref{admissible}).
 Therefore, the restriction of
the function $\Phi$ to the interval $(x_0,+\infty)$ is invertible. Let us denote its inverse by $\Psi$, which is an increasing function defined on $(\Phi(x_0), +\infty)$ and vanishes at
the origin.
	
We may rewrite equation \eqref{interf2} in the more compact form
$w(t,\eta^*)=\Phi(f(t))$. Therefore, is $w$ is small enough in some convenient norm, we can write $f(t)$ in terms of $w$, via the formula
\begin{eqnarray}
f(t)=	\Psi(w(t,\eta^*)).
\label{invert}
\end{eqnarray}

\begin{remark} \label{remark_linear}
{\rm Since $\Psi$ is smooth in in $(\Phi(x_0),+\infty)$, using Taylor expansion centered at zero, we can write
\begin{eqnarray}
\Psi(w(t,\eta^*)) =	\Psi'(0)w(t,\eta^*) + R(w(t,\eta^*))=e^{-\eta^*}\frac{\sgL2}{2\gamma\delta}w(t,\eta^*)+R(w(t,\eta^*)),
\label{invert_linear}
\end{eqnarray}
where $R$ is the remainder.}
\end{remark}

\subsection{The fully nonlinear problem} Thanks to equation \eqref{invert} and Remark \ref{remark_linear}, $f(t)$ can be eliminated in the relation \eqref{interf3}, obtaining the equivalent condition	
\begin{eqnarray*}
\bigg[\frac{\partial w}{\partial y}(t,\cdot)\bigg]_{y=\eta^*} =\bigg(1-\frac{2\delta}{\sgL2}\bigg ) \frac{1}{\sgH2}(\sgL2-\sgH2)w(t,\eta^*)
+R(w(t,\eta^*))\gamma e^{\eta^*}\bigg (1-\frac{2\delta}{\sgL2}\bigg ) \frac{2\delta}{\sgH2\sgL2}(\sgL2-\sgH2) \label{interf3_bis}
\end{eqnarray*}

The last step in the method consists in getting rid of $f'$ in equations \eqref{S1w}-\eqref{S2w}.

From \eqref{invert}, it comes:
\begin{eqnarray}
f'(t)=\Psi'(w(t,\eta^*))\frac{\partial w}{\partial t}(t,\eta^*).\label{dot_f}
\end{eqnarray}
To compute $\displaystyle \frac{\partial w}{\partial t}(t,\eta^*)$, we take the trace of \eqref{S1w} at $y=\eta^*_-$ (or equivalently the trace of \eqref{S2w} at $\eta^*_+$) and get
\begin{equation}
\frac{\partial w}{\partial t}(t,\eta^*)  = \frac{1}{2}\sgL2 \frac{\partial^2 w}{\partial y^2}(t,\eta^*_-) + \bigg (\delta -
\frac{1}{2}\sgL2\bigg )\frac{\partial w}{\partial y}(t,\eta^*_-) + f'(t)\bigg (\Psi(w(t,\eta^*))K''(\eta^*_-) + \frac{\partial w}{\partial y}(t,\eta^*_-)\bigg ).
\label{trace_eqn}
\end{equation}

Let us multiply \eqref{trace_eqn} by $\Psi'(w(t,\eta^*))$. Taking \eqref{dot_f} into account, we deduce that
\begin{align*}
f'(t) = &\frac{1}{2}\Psi'(w(t,\eta^*))\sgL2 \frac{\partial^2 w}{\partial y^2}(t,\eta^*_-) + \Psi'(w(t,\eta^*))\left(\delta -
\frac{1}{2}\sgL2\right)\frac{\partial w}{\partial y}(t,\eta^*_-) \\
&+ \Psi'(w(t,\eta^*))f'(t)\Big\{\Psi(w(t,\eta^*))K''(\eta^*_-) + \frac{\partial w}{\partial y}(t,\eta^*_-)\Big\},
\end{align*}
or, equivalently,
\begin{align*}
&f'(t)\bigg (1-\Psi'(w(t,\eta^*))\Big\{\Psi(w(t,\eta^*))K''(\eta^*_-) + \frac{\partial w}{\partial y}(t,\eta^*_-)\Big\}\bigg )\\
=& \frac{1}{2}\Psi'(w(t,\eta^*))\sgL2 \frac{\partial^2 w}{\partial y^2}(t,\eta^*_-) + \Psi'(w(t,\eta^*))\bigg (\delta -
\frac{1}{2}\sgL2\bigg )\frac{\partial w}{\partial y}(t,\eta^*_-).
\end{align*}
Assuming that $w$ is small enough (in some norm), we get to the following formula for the velocity of the perturbation of the free interface
\begin{eqnarray}
f'(t)=\frac{\frac{1}{2}\Psi'(w(t,\eta^*))\sgL2 \frac{\partial^2 w}{\partial y^2}(t,\eta^*_-) + \Psi'(w(t,\eta^*))\left(\delta -
\frac{1}{2}\sgL2\right)\frac{\partial w}{\partial y}(t,\eta^*_-)}
	{1-\Psi'(w(t,\eta^*))\big\{\Psi(w(t,\eta^*)) K''(\eta^*_-) + \frac{\partial w}{\partial y}(t,\eta^*_-)\big\}}.\label{velocity}
\end{eqnarray}
\begin{remark}
{\rm Formula \eqref{velocity} is an implicit \textit{second-order Stefan condition}, see \cite{BL18}.}
\end{remark}

Now, we define the linear differential operator $\LL$ and the boundary operator $B_1$ by setting
\begin{equation}\label{LL}
\LL w = \left\{
\begin{array}{ll}
 \frac{1}{2}\sgL2 \dfrac{\partial^2 w}{\partial y^2} + \left(\delta -
\frac{1}{2}\sgL2\right)\dfrac{\partial w}{\partial y}, \quad y < \eta^*,\\[3mm]
\frac{1}{2}\sgH2 \dfrac{\partial^2 w}{\partial y^2} + \left(\delta -
\frac{1}{2}\sgH2\right)\dfrac{\partial w}{\partial y}, \quad y > \eta^*,
\end{array}
\right.
\end{equation}
and
\begin{eqnarray}\label{BB}
B_1w = \bigg[\frac{\partial w}{\partial y}\bigg ]_{y=\eta^*}  - Aw(\eta^*), \qquad\;\, {\rm where}\;\, A= \left(1-\frac{2\delta}{\sgL2}\right) \frac{1}{\sgH2}(\sgL2-\sgH2)>0.
\end{eqnarray}

Using such operators, we can rewrite the fully nonlinear parabolic problem in the form:
\begin{equation}\label{FNLP}
\left\{
\begin{array}{ll}
\displaystyle \frac{d w}{d t}(t)= \LL w(t) + \FF(w(t)),\\[3mm]
B_1 w(t)= \GG(w(t)),
\end{array}
\right.
\end{equation}
where the nonlinear (quadratic) terms ${\mathcal F}$ and ${\mathcal G}$ are defined by
\begin{align}
\FF(w(t,y))&=\frac{\frac{1}{2}\Psi'(w(t,\eta^*))\sgL2 \frac{\partial^2 w}{\partial y^2}(t,\eta^*_-) + \Psi'(w(t,\eta^*))\left(\delta -
	\frac{1}{2}\sgL2\right)\frac{\partial w}{\partial y}(t,\eta^*_-)}
{1-\Psi'(w(t,\eta^*))\big\{\Psi(w(t,\eta^*))K''(\eta^*_-) + \frac{\partial w}{\partial y}(t,\eta^*_-)\big\}} \nonumber\\
&\qquad\quad\;\, \times \Big(\Psi(w(t,\eta^*))K''(y) + \frac{\partial w}{\partial y}(t,y)\Big),\label{Fscript}
\\
\GG(w(t))&=R(w(t,\eta^*))\gamma e^{\eta^*}\left(1-\frac{2\delta}{\sgL2}\right) \frac{2\delta}{\sgH2\sgL2}(\sgL2-\sgH2),
\notag
\end{align}
where
$K''(\eta^*_-)=\gamma e^{\eta^*}\left (1-\dfrac{2\delta}{\sigma_L^2}\right )^2$.

\begin{remark}
{\rm The problem for $w$ does not contain the free interface which has been eliminated. It is a \textit{fully nonlinear parabolic problem} (see \cite{Lunardi96}) because the nonlinear term \eqref{Fscript} contains traces of second-order derivatives with respect to the spatial variable $y$ and, therefore, it is of the same order as the linear operator $\LL$.}
\end{remark}

\section{Analysis of the linear problem}\label{analysis}
For notational convenience, we set
\begin{align*}
c_H=\frac{2\delta}{\sigma_H^2},\qquad\;\,c_L=\frac{2\delta}{\sigma_L^2}.
\end{align*}
For financial motivations (see \cite{LWH16}), the domain $(\AA)$ of admissible parameters is more specifically
\begin{align}\label{admissible_bis}
c_L<1<c_H\le 3.
\end{align}

In this section, we will prove that the realization of the operator ${\mathcal L}$ in a suitable weighted space of continuous functions generates an analytic semigroup. We also characterize its spectrum.
For this purpose, we introduce the weights $q_H:[\eta^*,+\infty)\to\R$ and $q_L:(-\infty,\eta^*]\to\R$, defined as follows:
\begin{align*}
&q_L(y)=\exp\bigg (-\frac{1}{2}(c_L-1)y\bigg ),\qquad\;\,y\in (-\infty,\eta^*],\\[1mm]
&q_H(y)=\exp\bigg (-\frac{1}{2}(c_H-1)y\bigg ),\qquad\;\,y\in [\eta^*,+\infty),
\end{align*}

\begin{definition}\label{H_norm}
We denote by $\mathscr{X}$ the space of all the pairs $\bm{w}=(w_1,w_2)$, where $w_1:(-\infty,\eta^*]\to\C$ and $w_2:[\eta^*,+\infty)\to\C$ are continuous functions such that
\begin{eqnarray*}
\|\bm{w}\|_{\mathscr{X}}:=\sup_{y\le\eta^*}\bigg |\frac{w_1(y)}{q_L(y)}\bigg |+\sup_{y\ge\eta^*}\bigg |\frac{w_2(y)}{q_H(y)}\bigg |<+\infty
\end{eqnarray*}

We also introduce the subspace $\mathscr{X}_2$ of $\mathscr{X}$ of pairs $\bm{w}=(w_1,w_2)$ such that $w_1\in C^2((-\infty,\eta^*];\C)$ and $w_2\in C^2([\eta^*,+\infty);\C)$ and
both $D_y\bm{w}:=(D_yw_1,D_yw_2)$ and $D_{yy}\bm{w}:=(D_{yy}w_1,D_{yy}w_2)$ belong to ${\mathscr X}$. It is normed by setting
\begin{eqnarray*}
\|\bm{w}\|_{\mathscr{X}_2}:=\|\bm{w}\|_{{\mathscr X}}+\|D_y\bm{w}\|_{\mathscr{X}}+\|D_{yy}\bm{w}\|_{\mathscr{X}}.
\end{eqnarray*}

\end{definition}

\begin{remark}
{\rm From the definition, it turns out that if $\bm{w}$ belongs to $\mathscr{X}$ and $c_L<1<c_H$, then $w_1$ and $w_2$ decrease to zero, with exponential rate, at $-\infty$ and $+\infty$, respectively.}
\end{remark}

\begin{lemma}\label{lemma-char-eq}
The space ${\mathscr X}_2$ can be equivalently characterized as the set of all pairs $\bm{w}=(w_1,w_2)$ such that the functions $\dfrac{w_1}{q_L}$ and $\dfrac{w_2}{q_H}$ belong to $C^2_b((-\infty,\eta^*];\C)$ and
$C^2_b([\eta^*,+\infty);\C)$, respectively. Moreover, the norm of ${\mathscr X}_2$ is equivalent to the classical norm of $C^2_b((-\infty,\eta^*];\C)\times C^2_b([\eta^*,+\infty);\C)$.
\end{lemma}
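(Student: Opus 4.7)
The plan is to exploit the fact that $q_L$ and $q_H$ are purely exponential weights, so that dividing $w_i$ by $q_i$ interacts with differentiation through constants only. Setting $\alpha_L:=-\tfrac{1}{2}(c_L-1)$ and $\alpha_H:=-\tfrac{1}{2}(c_H-1)$, one has $q_i(y)=e^{\alpha_i y}$; I would then introduce $v_1:=w_1/q_L$ on $(-\infty,\eta^*]$ and $v_2:=w_2/q_H$ on $[\eta^*,+\infty)$ as the candidate $C^2_b$ functions.

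The next step is a direct application of the product rule on each half-line, which yields
\[
\frac{D_yw_i}{q_i}=D_yv_i+\alpha_i v_i,\qquad \frac{D_{yy}w_i}{q_i}=D_{yy}v_i+2\alpha_i D_yv_i+\alpha_i^2 v_i,
\]
and, inverting,
\[
D_yv_i=\frac{D_yw_i}{q_i}-\alpha_i\frac{w_i}{q_i},\qquad D_{yy}v_i=\frac{D_{yy}w_i}{q_i}-2\alpha_i\frac{D_yw_i}{q_i}+\alpha_i^2\frac{w_i}{q_i}.
\]
From the first pair of identities, if $v_1\in C^2_b((-\infty,\eta^*];\C)$ and $v_2\in C^2_b([\eta^*,+\infty);\C)$, then each of $w_i/q_i$, $(D_yw_i)/q_i$ and $(D_{yy}w_i)/q_i$ is bounded, so $\bm{w}\in\mathscr{X}_2$. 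The second pair gives the converse: whenever $\bm{w}\in\mathscr{X}_2$, the functions $v_i$ together with their first two derivatives are bounded, and hence $v_i\in C^2_b$ on the relevant half-line.

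For the norm equivalence I would read the same identities as linear relations with coefficients depending only on $\alpha_L$ and $\alpha_H$: each of the six sup-norms contributing to $\|\bm w\|_{\mathscr X_2}$ is dominated by a constant multiple of the $C^2_b$-norms of $v_1$ and $v_2$, and conversely each $C^2_b$-norm contribution of $v_i$ is dominated by a constant multiple of the corresponding terms in $\|\bm w\|_{\mathscr X_2}$. This yields the two-sided bound and the asserted equivalence. The argument is essentially bookkeeping; the only point warranting a brief remark is that the statement involves $C^2$-regularity up to the endpoint $\eta^*$ (in the sense of one-sided derivatives), but since $q_i$ is smooth up to $\eta^*$ the passage $w_i\leftrightarrow v_i$ preserves one-sided $C^2$ regularity, so no real obstacle arises beyond keeping track of the constants $\alpha_L$ and $\alpha_H$.
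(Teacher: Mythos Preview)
Your proposal is correct and follows essentially the same approach as the paper: both arguments exploit the exponential form of the weights, apply the product rule to express the derivatives of $v_i=w_i/q_i$ as constant-coefficient linear combinations of $w_i/q_i$, $(D_yw_i)/q_i$, $(D_{yy}w_i)/q_i$ (and vice versa), and read off both the set equality and the norm equivalence from these identities. The only difference is cosmetic---you package the computation with the shorthand $\alpha_L,\alpha_H$ and a single index $i$, while the paper writes out the two components separately with the constants $c_L,c_H$ displayed explicitly.
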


\begin{proof} A simple computation shows that
\begin{align*}
&D_y\bigg (\frac{w_1}{q_L}\bigg )=\frac{D_yw_1}{q_L}+\frac{1}{2}(c_L-1)\frac{w_1}{q_L},\\
&D_{yy}\bigg (\frac{w_1}{q_L}\bigg )=\frac{D_{yy}w_1}{q_L}+(c_L-1)\frac{D_yw_1}{q_L}+\frac{1}{4}(c_L-1)^2\frac{D_yw_1}{q_L}
\end{align*}
in $(-\infty,\eta^*]$. Hence, $\displaystyle\frac{w_1}{q_L}$ belongs to $C^2_b((-\infty,\eta^*];\C)$ and
\begin{align*}
\bigg\|\frac{w_1}{q_L}\bigg\|_{C^2_b((-\infty,\eta^*])}\le c_L\sup_{y\le\eta^*}\bigg |\frac{D_yw_1(y)}{q_L(y)}\bigg |+\frac{1}{4}(1-c_L)(3-c_L)\sup_{y\le\eta^*}\bigg |\frac{w_1(y)}{q_L(y)}\bigg |
+\sup_{y\le\eta^*}\bigg |\frac{D_{yy}w_1(y)}{q_L(y)}\bigg |.
\end{align*}

Similarly, $\frac{w_2}{q_H}$ belongs to $C^2_b([\eta^*,+\infty);\C)$ and
\begin{align*}
\bigg\|\frac{w_2}{q_H}\bigg\|_{C^2_b([\eta^*,+\infty))}\le c_H\sup_{y\ge\eta^*}\bigg |\frac{D_yw_2(y)}{q_H(y)}\bigg |+\frac{1}{4}(c_H^2-1)\sup_{y\ge\eta^*}\bigg |\frac{w_2(y)}{q_H(y)}\bigg |
+\sup_{y\ge\eta^*}\bigg |\frac{D_{yy}w_2(y)}{q_L(y)}\bigg |.
\end{align*}

Combining these last two estimates, we can infer that there exists a positive constant $C$, independent of $\bm{w}$ such that
\begin{eqnarray*}
\bigg\|\frac{w_1}{q_L}\bigg\|_{C^2_b((-\infty,\eta^*])}+\bigg\|\frac{w_2}{q_H}\bigg\|_{C^2_b([\eta^*,+\infty)}\le C\|\bm{w}\|_{\mathscr{X}_2}.
\end{eqnarray*}

Vice versa, suppose that $z_1\in C^2_b((-\infty,\eta^*];\C)$ and $z_2 \in C^2_b([\eta^*,+\infty);\C)$ and set $\bm{w}=(w_1,w_2)$, where $w_1=z_1q_L$ and $w_2=z_2q_H$. Clearly,
$\bm{w}$ belongs to $\mathscr{X}$ and $\|\bm{w}\|_{\mathscr{X}}=\|z_1\|_{C_b((-\infty,\eta^*];\C)}+\|z_2\|_{C_b([\eta^*,+\infty);\C)}$.
Moreover, since
\begin{align*}
D_yw_1=\bigg [D_yz_1-\frac{1}{2}(c_L-1)z_1\bigg ]q_L,\qquad\;\, D_{yy}w_1=\bigg [D_{yy}z_1+(1-c_L)D_yz_1+\frac{1}{4}(c_L-1)^2z_1\bigg ]q_L
\end{align*}
in $(-\infty,\eta^*]$ and
\begin{align*}
D_yw_2=\bigg [D_yz_2-\frac{1}{2}(c_H-1)z_2\bigg ]q_H,\qquad\;\, D_{yy}w_2=\bigg [D_{yy}z_2-(c_H-1)D_yz_2+\frac{1}{4}(c_H-1)^2z_2\bigg ]q_H
\end{align*}
in $[\eta^*,+\infty)$, it is an easy task to check that $\bm{w}\in\mathscr{X}_2$ and
\begin{eqnarray*}
\|\bm{w}\|_{\mathscr{X}_2}\le C_2\big (\|z_1\|_{C^2_b((-\infty,\eta^*];\C)}+\|z_2\|_{C^2_b([\eta^*,+\infty);\C)}\big )
\end{eqnarray*}
for some positive constant $C_2$, independent of $\bm{w}$. This completes the proof.
\end{proof}

\subsection{The generation result}
This the main result of this subsection.

\begin{theorem}
\label{thm:gen}
The realization $L$ of the operator $\mathcal{L}$ in $\mathscr{X}$, with domain
\begin{eqnarray*}
D(L)=\{\bm{w}\in \mathscr{X}: {\mathcal L}\bm{w}\in {\mathscr X},\ {\mathcal B}(\bm{w})=\bm{0}\}=\{\bm{w}\in\mathscr{X}_2: {\mathcal B}(\bm{w})=\bm{0}\},
\end{eqnarray*}
generates an analytic semigroup. Here, ${\mathcal B}\bm{w}=(B_0\bm{w},B_1\bm{w})$, where $B_0\bm{w}=w_2(\eta^*)-w_1(\eta^*)$ and the operator $B_1$ is defined by \eqref{BB}.
Moreover, its spectrum $\sigma(L)$ consists of the union of the halfline $\left (-\infty, -\min\left\{\frac{c_L^2}{8}(c_L^2-1)^2,\frac{c_H^2}{8}(c_H^2-1)^2\right\}\right ]$ and the solutions of the dispersion relation ${\mathcal D}_{\lambda}=0$, where
\begin{eqnarray*}
{\mathcal D}_{\lambda}=A-\mu^H_{-}+\mu^L_{+},
\end{eqnarray*}
and
\begin{align*}
&\mu^J_{\pm}=-\frac{1}{2}(c_J-1)\pm\sqrt{\frac{1}{4}(c_J-1)^2+\frac{2\lambda}{\sigma_J^2}},\qquad\;\,J\in\{H,L\}.
\end{align*}
\end{theorem}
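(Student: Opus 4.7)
The plan is to conjugate the problem to one with piecewise constant coefficients and then construct the resolvent of the conjugated operator explicitly on each half-line. By Lemma \ref{lemma-char-eq}, the rescalings $z_L=w_1/q_L$ and $z_H=w_2/q_H$ induce a linear topological isomorphism between $\mathscr X_2$ and $C^2_b((-\infty,\eta^*];\C)\times C^2_b([\eta^*,+\infty);\C)$. A direct computation, based on the identities for $D_y w_J$ and $D_{yy} w_J$ displayed in the proof of Lemma \ref{lemma-char-eq}, shows that the first-order terms of $\mathcal L$ are annihilated, so the conjugated operator reduces to
\begin{eqnarray*}
\widetilde{\mathcal L}z_J=\tfrac12\sigma_J^2 D_{yy} z_J-\tfrac18\sigma_J^2(c_J-1)^2 z_J,\qquad J\in\{L,H\},
\end{eqnarray*}
on the two respective half-lines, while $\mathcal B$ transforms into a coupled boundary operator $\widetilde{\mathcal B}$ involving the traces of $z_L$, $z_H$ and of their first derivatives at $\eta^*$.

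Next, I would solve the resolvent equation $(\lambda-\widetilde L)\bm z=\bm g$ by a Green-function argument. The characteristic exponents of $\lambda-\widetilde{\mathcal L}$ on the $J$-side are $\nu^J_\pm(\lambda)=\pm\sqrt{\tfrac14(c_J-1)^2+2\lambda/\sigma_J^2}$, related to those of $\lambda-\mathcal L$ by $\nu^J_\pm=\mu^J_\pm+\tfrac12(c_J-1)$. For $\lambda$ outside the halfline announced in the theorem, $\Re\,\nu^J_\pm\ne 0$ for $J\in\{L,H\}$, so on each half-line I can assemble a bounded candidate solution as the sum of a convolution with the one-dimensional heat-resolvent kernel (giving a particular solution) and a multiple of the unique admissible decaying exponential at the corresponding infinity. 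The two remaining amplitudes are then fixed by imposing $\widetilde{\mathcal B}\bm z=\bm 0$. A direct computation reveals that the determinant of this $2\times 2$ matching system equals, up to a nonzero multiplicative factor depending only on $\eta^*$, $c_L$ and $c_H$, precisely the dispersion function $\mathcal D_\lambda=A-\mu^H_-+\mu^L_+$.

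Uniform $C_b$-bounds on the constant-coefficient heat resolvent on a half-line (in the spirit of \cite{Lunardi96}), combined with the uniform invertibility of the $2\times 2$ matching matrix in any sector $\{\lambda:|\arg(\lambda-\lambda_0)|<\pi/2+\varepsilon\}$ staying at positive distance from the zeros of $\mathcal D_\lambda$ and from the halfline, yield the sectorial estimate $\|R(\lambda,L)\|_{\mathcal L(\mathscr X)}\le M/|\lambda-\lambda_0|$, giving the generation of an analytic semigroup. For the spectral description, outside the halfline the construction above shows that $\lambda\in\sigma(L)$ iff the matching matrix degenerates, i.e.\ $\mathcal D_\lambda=0$, the corresponding eigenfunction being the piecewise exponential $(a e^{\mu^L_+ y},b e^{\mu^H_- y})$ with $b/a$ determined by $B_0 \bm w=0$. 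For $\lambda$ in the halfline, at least one of the radicands $\tfrac14(c_J-1)^2+2\lambda/\sigma_J^2$ is non-positive, so on the corresponding side the homogeneous solutions of $\lambda-\mathcal L$ become purely oscillatory (or critically exponential), which obstructs surjectivity of $\lambda-L$ onto $\mathscr X$ and places the halfline inside $\sigma(L)$.

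The main obstacle I expect is the bookkeeping involved in transforming $\mathcal B$: since $B_1$ mixes a jump of the first derivative with the trace term $-Aw(\eta^*)$, conjugating by $q_L$ and $q_H$ produces additional one-sided contributions proportional to $\tfrac12(c_J-1)z_J(\eta^*)$ whose recombination must be handled carefully before the $2\times 2$ matching determinant collapses to the clean closed form $A-\mu^H_-+\mu^L_+$. Some attention is also required near the edge of the halfline, where the exponents $\nu^J_\pm$ degenerate and the classical half-line heat-resolvent estimates have to be re-examined; this can be handled by keeping $\Re\,\lambda$ away from the branch points when verifying the sectorial bound.
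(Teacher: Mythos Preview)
Your plan is sound and leads to the same conclusions, but it differs from the paper's argument in its organizing principle. The paper does \emph{not} conjugate by the weights; it works directly in $\mathscr X$, writes down the general $\mathscr X$-solution of $\lambda\bm w-\mathcal L\bm w=\bm f$ on each half-line via variation of parameters in terms of the original exponents $\mu^J_\pm$, imposes $\mathcal B\bm w=\bm 0$ to obtain a $2\times2$ linear system whose determinant is exactly $\mathcal D_\lambda$, and then estimates each piece of the explicit resolvent formula by hand to get the sectorial bound $\|R(\lambda,L)\|\le M/|\lambda|$ for $\Re\lambda$ large. Your conjugation $z_J=w_J/q_J$ reduces $\mathcal L$ to the Schr\"odinger form $\frac12\sigma_J^2 D_{yy}-\frac18\sigma_J^2(c_J-1)^2$ in $C_b$, which buys you the ability to quote standard half-line heat-resolvent estimates from \cite{Lunardi96} instead of redoing the kernel bounds; the price is the boundary bookkeeping you correctly flag, since $B_1$ becomes $D_yz_H(\eta^*)-D_yz_L(\eta^*)+\frac12(1-c_H)z_H(\eta^*)-\frac12(1-c_L)z_L(\eta^*)-A z(\eta^*)$ before the determinant collapses to $A-\mu^H_-+\mu^L_+$ (using $\nu^J_\pm=\mu^J_\pm+\frac12(c_J-1)$). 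Either route is legitimate; yours is conceptually cleaner, the paper's is more self-contained.

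One point to tighten: on the halfline you argue that purely oscillatory homogeneous solutions ``obstruct surjectivity''. The paper instead shows directly that $\lambda-L$ fails to be \emph{injective}: when the radicand on (say) the $L$-side is nonpositive, both exponentials $e^{\mu^L_\pm y}$ satisfy $|e^{\mu^L_\pm y}/q_L(y)|$ bounded on $(-\infty,\eta^*]$, so the homogeneous problem in $\mathscr X$ carries at least three free constants against only two interface conditions, hence has a nontrivial kernel. This non-injectivity argument is immediate and avoids having to justify a surjectivity failure; you may want to switch to it.
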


\begin{proof}
Let us fix $\bm{f}\in\mathscr{X}$, $\lambda\in\mathbb C$ and consider the resolvent equation $\lambda\bm{w}-\mathscr{L}\bm{w}=\bm{f}$. We need to distinguish some cases.

{\it Case 1}. If $\lambda\notin \left (-\infty,-\frac{c_L^2}{8}(c_L^2-1)^2\right ]\cup \left (-\infty,-\frac{c_H^2}{8}(c_H^2-1)^2\right ]$, then
an easy computation reveals that the more general solution $\bm{w}_{\lambda}$ of such an equation, which belongs to the space $\mathscr{X}$, is given by
\begin{align}
w_{1,\lambda}(y)=&\bigg (c_{1,\lambda}+\frac{1}{\mu^L_+-\mu^L_-}\int_y^{\eta^*}e^{-\mu^L_+s}f_1(s)ds\bigg )e^{\mu^L_+y}
+\frac{1}{\mu^L_+-\mu^L_-}\bigg (\int_{-\infty}^ye^{-\mu^L_-s}f_1(s)ds\bigg )e^{\mu^L_-y}
\label{w1}
\end{align}
for every $y\in (-\infty,\eta^*]$ and
\begin{align}
w_{2,\lambda}(y)=&\frac{1}{\mu^H_+-\mu^H_-}\bigg (\int_y^{+\infty}e^{-\mu^H_+s}f_2(s)ds\bigg )e^{\mu^H_+y}
+\bigg (d_{1,\lambda}+\frac{1}{\mu^H_+-\mu^H_-}\int_{\eta^*}^ye^{-\mu^H_-s}f_2(s)ds\bigg )e^{\mu^H_-y}
\label{w2}
\end{align}
for every $y\in [\eta^*,+\infty)$.

Let us now impose the boundary conditions at $\eta^*$. The first condition is $B_0\bm{w}_{1,\lambda}=0$, i.e., $w_{1,\lambda}(\eta^*)=w_{2,\lambda}(\eta^*)$, which means that
\begin{align}
c_{1,\lambda}e^{\mu^L_+\eta^*}-d_{1,\lambda}e^{\mu^H_-\eta^*}=&-
\frac{e^{\mu^L_-\eta^*}}{\mu^L_+-\mu^L_-}\int_{-\infty}^{\eta^*}e^{-\mu^L_-s}f_1(s)ds
+\frac{e^{\mu^H_+\eta^*}}{\mu^H_+-\mu^H_-}\int_{\eta^*}^{+\infty}e^{-\mu^H_+s}f_2(s)ds,
\label{dirty}
\end{align}
whereas the second condition $B_1\bm{w}_{\lambda}=0$, i.e.,
$\displaystyle\frac{\partial w_2}{\partial y}(\eta^*)-\displaystyle\frac{\partial w_1}{\partial y}(\eta^*)-Aw_2(\eta^*)=0$, reads as follows:
\begin{align}
&\mu^L_+e^{\mu^L_+\eta^*}c_{1,\lambda}+(A-\mu^H_-)e^{\mu^H_-\eta^*}d_{1,\lambda}\notag\\
=&\frac{\mu^H_+-A}{\mu^H_+-\mu^H_-}\bigg (\int_{\eta^*}^{+\infty}e^{-\mu^H_+s}f_2(s)ds\bigg )e^{\mu^H_+\eta^*}-\frac{\mu^L_-}{\mu^L_+-\mu^L_-}\bigg (\int_{-\infty}^{\eta^*}e^{-\mu^L_-s}f_1(s)ds\bigg )e^{\mu^L_-\eta^*}.
\label{dancing}
\end{align}

The system \eqref{dirty}-\eqref{dancing} is uniquely solvable if and only if and only if ${\mathcal D}_{\lambda}\neq 0$, i.e., if and only if
\begin{align*}
(1-c_L)\bigg (\frac{\sgL2}{\sgH2}-1\bigg )+\frac{1}{2}(c_H+c_L)-1+\sqrt{\frac{1}{4}(c_H-1)^2+\frac{2\lambda}{\sigma_H^2}}
-\sqrt{\frac{1}{4}(c_L-1)^2+\frac{2\lambda}{\sigma_L^2}}\neq 0.
\end{align*}
In this case,
\begin{align*}
c_{1,\lambda}=&
-\frac{A-\mu^H_{-}+\mu^L_-}{(\mu^L_+-\mu^L_-){\mathcal D}_{\lambda}}\bigg (\int_{-\infty}^{\eta^*}e^{-\mu^L_-s}f_1(s)ds\bigg )e^{(\mu^L_--\mu^L_+)\eta^*}+\frac{1}{{\mathcal D}_{\lambda}}\bigg (\int_{\eta^*}^{+\infty}e^{-\mu^H_+s}f_2(s)ds\bigg )e^{(\mu^H_+-\mu^L_+)\eta^*}
\end{align*}
and
\begin{align*}
d_{1,\lambda}=
\frac{1}{{\mathcal D}_{\lambda}}&\bigg (\int_{-\infty}^{\eta^*}e^{-\mu^L_-s}f_1(s)ds\bigg )e^{(\mu^L_--\mu^H_-)\eta^*}
-\frac{A-\mu^H_++\mu^L_+}{(\mu^H_+-\mu^H_-){\mathcal D}_{\lambda}}\bigg (\int_{\eta^*}^{+\infty}e^{-\mu^H_+s}f_2(s)ds\bigg )e^{(\mu^H_+-\mu^H_-)\eta^*},
\end{align*}

On the other hand, if ${\mathcal D}_{\lambda}=0$, then the equation $\lambda\bm{w}-{\mathcal L}\bm{w}=\bm{0}$, subject to the boundary conditions ${\mathcal B}\bm{w}={\bm 0}$, admits more than a unique solution, i.e., every
solution to the dispersion relation ${\mathcal D}_{\lambda}=0$ lies in the point spectrum of the operator $({\mathcal L},{\mathcal B})$.

Suppose now that $\lambda\in \left (-\infty,-\frac{c_L^2}{8}(c_L^2-1)^2\right ]$. We claim that $\lambda$ belongs to the spectrum of the operator $L$. For this purpose, we observe that, if we take $\bm{f}=\bm{0}$, then
the more general solution to the equation $\lambda\bm{w}-{\mathcal L}\bm{w}=\bm{f}$, which belongs to ${\mathscr X}$, has the first component which is given by
\begin{eqnarray*}
w_1(y)=c_1e^{\mu^L_+y}+c_2e^{\mu^L_-y},\qquad\;\,y\in (-\infty,\eta^*],
\end{eqnarray*}
whereas the second component is given by
\begin{eqnarray*}
w_2(y)=d_1e^{\mu^L_-y},\qquad\;\,y\in (-\infty,\eta^*],
\end{eqnarray*}
if $\lambda\notin \left (-\infty, -\frac{c_H^2}{8}(c_H^2-1)^2\right ]$, and
\begin{eqnarray*}
w_2(y)=d_1e^{\mu^L_-y}+d_2e^{\mu^L_+y},\qquad\;\,y\in (-\infty,\eta^*],
\end{eqnarray*}
if $\lambda\in \left (-\infty, -\frac{c_H^2}{8}(c_H^2-1)^2\right ]$. Therefore, we have at least three parameters to identify and two boundary conditions. Clearly, this is not possible. Hence, the interval
$\left (-\infty, -\frac{c_L^2}{8}(c_L^2-1)^2\right ]$ is contained in the spectrum of the operator $L$.

A completely similar argument shows that also the interval $\left (-\infty, -\frac{c_H^2}{8}(c_H^2-1)^2\right ]$ is contained in $\sigma(L)$.

Now, we observe that the function  $\bm{w}_{\lambda}$, defined by \eqref{w1} and \eqref{w2}, has first- and second-order derivatives which belong to ${\mathscr X}$.
Indeed,
\begin{align*}
\frac{\partial w_{1,\lambda}}{\partial y}(y)=&\mu^L_+\bigg (c_{1,\lambda}+\frac{1}{\mu^L_+-\mu^L_-}\int_y^{\eta^*}e^{-\mu^L_+s}f_1(s)ds\bigg )e^{\mu^L_+y}
+\frac{\mu^L_-}{\mu^L_+-\mu^L_-}\bigg (\int_{-\infty}^ye^{-\mu^L_-s}f_1(s)ds\bigg )e^{\mu^L_-y}
\end{align*}
for every $y\in (-\infty,\eta^*]$ and
\begin{align*}
\frac{\partial w_{2,\lambda}}{\partial y}(y)=&\frac{\mu^H_+}{\mu^H_+-\mu^H_-}\bigg (\int_y^{+\infty}e^{-\mu^H_+s}f_2(s)ds\bigg )e^{\mu^H_+y}
+\mu^H_-\bigg (d_{1,\lambda}\!+\!\frac{1}{\mu^H_+-\mu^H_-}\int_{\eta^*}^ye^{-\mu^H_-s}f_2(s)ds\bigg )e^{\mu^H_-y}
\end{align*}
for every $y\in [\eta^*,+\infty)$.
These two functions are clearly continuous in $(-\infty,\eta^*]$ and in $[\eta^*,+\infty)$, respectively. Moreover, it is an easy task to show that
the functions $\displaystyle\frac{w_{1,\lambda}}{q_L}$ and $\displaystyle\frac{w_{2,\lambda}}{q_H}$ are bounded in $(-\infty,\eta^*]$ and in $[\eta^*,+\infty)$, respectively.
Since
\begin{align*}
&\frac{\partial^2w_{1,\lambda}}{\partial y^2}=\frac{2}{\sigma_L^2}\bigg (\lambda w_{1,\lambda}-\bigg (\delta-\frac{1}{2}\sigma_L^2\bigg )D_yw_{1,\lambda}(y)-f_1(y)\bigg ),\qquad\;\,y\in (-\infty,\eta^*],\\
&\frac{\partial^2w_{2,\lambda}}{\partial y^2}=\frac{2}{\sigma_H^2}\bigg (\lambda w_{2,\lambda}-\bigg (\delta-\frac{1}{2}\sigma_H^2\bigg )D_yw_{2,\lambda}(y)-f_1(y)\bigg ),\qquad\;\,y\in [\eta^*,+\infty)
\end{align*}
and $\bm{w}_{\lambda}$, $D_y\bm{w}_{\lambda}$ and $\bm{f}$ belong to ${\mathscr X}$, by difference we conclude that $D_{yy}\bm{w}\in {\mathscr X}$.

The characterization of the domain of $L$ and of its spectrum is complete.

To conclude the proof, we need to show that $L$ is a sectorial operator. For this purpose, we set $R(\lambda,L)\bm{f}:=\bm{w}_{\lambda}$ for every $\lambda\in\rho(L)$ and show that
there exist two positive constants $C$ and $M$ such that
\begin{equation}
\|R(\lambda,L)\bm{f}\|_{L(\mathscr{X})}\le M|\lambda|^{-1}\|\bm{f}\|_{\mathscr{X}},
\label{idro-11}
\end{equation}
for every $\lambda\in\C$, with ${\rm Re}\lambda\ge M$. In view of \cite[Proposition 3.2.8]{abdel-luca} this will be enough to conclude that the operator $L$ is sectorial in ${\mathscr H}$.

We begin by estimating the function $w_{1,1,\lambda}:(-\infty,\eta^*]\to\C$, defined by
\begin{align*}
w_{1,1,\lambda}(y)=\frac{1}{\mu^L_+-\mu^L_-}\bigg (\int_y^{\eta^*}e^{-\mu^L_+s}f_1(s)ds\bigg )e^{\mu^L_+y}
+\frac{1}{\mu^L_+-\mu^L_-}\bigg (\int_{-\infty}^ye^{-\mu^L_-s}f_1(s)ds\bigg )e^{\mu^L_-y}
\end{align*}
for every $y\in (-\infty,\eta^*]$. Observe that
\begin{align*}
\bigg |\frac{w_{1,1,\lambda}(y)}{q_L(y)}\bigg |\le &\frac{1}{|\mu^L_+-\mu^L_-|}\sup_{s\le\eta}\bigg |\frac{f_1(s)}{q_L(s)}\bigg |\bigg (
\int_y^{\eta^*}e^{\frac{1}{2}(\mu^L_+-\mu^L_-)(y-s)}ds
+\int_{-\infty}^ye^{-\frac{1}{2}(\mu^L_+-\mu^L_-)(y-s)}ds\bigg )\\
\le & \frac{2}{|\mu^L_+-\mu^L_-|}\sup_{s\le\eta}\bigg |\frac{f_1(s)}{q_L(s)}\bigg |\bigg (\int_0^{\eta^*-y}e^{-\frac{1}{2}{\rm Re}(\mu^L_+-\mu^L_-)s}ds+\int_0^{+\infty}e^{-\frac{1}{2}{\rm Re}(\mu^L_+-\mu^L_-)s}ds\bigg )\\
= &\frac{2}{|\mu^L_+-\mu^L_-|}\frac{1}{{\rm Re}(\mu^L_+-\mu^L_-)}\sup_{s\le\eta}\bigg |\frac{f_1(s)}{q_L(s)}\bigg |
\big (2-e^{-\frac{1}{2}{\rm Re}(\mu^L_+-\mu^L_-)(\eta^*-y)}\big )\\
\le &\frac{4}{|\mu^L_+-\mu^L_-|}\frac{1}{{\rm Re}(\mu^L_+-\mu^L_-)}\sup_{s\le\eta}\bigg |\frac{f_1(s)}{q_L(s)}\bigg |
\end{align*}
for every $y\in (-\infty,\eta^*]$. Since
\begin{eqnarray*}
\mu^L_+-\mu^L_-=\sqrt{(c_L-1)^2+\frac{8\lambda}{\sigma_L^2}}
\end{eqnarray*}
it follows easily that
\begin{align*}
|\mu^L_+-\mu^L_-|=&\sqrt{|\lambda|}\sqrt{\bigg |\frac{(c_L-1)^2}{\lambda}+\frac{8}{\sigma_L^2}\bigg |}
\ge \sqrt{|\lambda|}\sqrt{\frac{8}{\sigma_L^2}-\frac{(c_L-1)^2}{|\lambda|}}
\ge \frac{2}{\sigma_L}\sqrt{|\lambda|},
\end{align*}
provided that $|\lambda|\ge \frac{\sigma_L^2(c_L-1)^2}{4}$.

Moreover, recalling that by $\sqrt{\cdot}$ we mean the main determination of the square root in $\C$ and ${\rm Re}\sqrt{z}=\frac{\sqrt{2}}{2}\sqrt{|z|+{\rm Re}(z)}$ for every $z\in\C$, we can also estimate
\begin{align}
{\rm Re}(\mu^L_+-\mu^L_-)\ge\frac{\sqrt{2}}{2}\sqrt{\bigg |(c_L-1)^2+\frac{8\lambda}{\sigma_L^2}\bigg |+(c_L-1)^2+\frac{8{\rm Re}(\lambda)}{\sigma_L^2}}
\ge\frac{\sqrt{2}}{2}\sqrt{\bigg |(c_L-1)^2+\frac{8\lambda}{\sigma_L^2}\bigg |},
\label{idro-4}
\end{align}
if the real part of $\lambda$ is nonnegative. Summing up, if ${\rm Re}(\lambda)\ge \frac{\sigma_L^2(c_L-1)^2}{4}$, then
\begin{align*}
\sup_{y\le\eta^*}\bigg |\frac{w_{1,1,\lambda}(y)}{q_L(y)}\bigg |\le \frac{\sqrt{2}\sigma_L}{|\lambda|}\sup_{s\le\eta}\bigg |\frac{f_1(s)}{q_L(s)}\bigg |.
\end{align*}

In a completely similar way, we can estimate the function $w_{2,1,\lambda}:[\eta^*,+\infty)\to\C$, defined by
\begin{align*}
w_{2,1,\lambda}(y)=\frac{1}{\mu^H_+-\mu^H_-}\bigg (\int_y^{+\infty}e^{-\mu^H_+s}f_2(s)ds\bigg )e^{\mu^H_+y}
+\frac{1}{\mu^H_+-\mu^H_-}\bigg (\int_{\eta^*}^ye^{-\mu^H_-s}f_2(s)ds\bigg )e^{\mu^H_-y}
\end{align*}
for every $y\in [\eta^*,+\infty)$. It turns out that
\begin{align*}
\sup_{y\le\eta^*}\bigg |\frac{w_{2,1,\lambda}(y)}{q_H(y)}\bigg |\le \frac{\sqrt{2}\sigma_H}{|\lambda|}\sup_{s\ge\eta}\bigg |\frac{f_2(s)}{q_L(s)}\bigg |.
\end{align*}

We now turn our attention to the function $y\mapsto w_{1,2,\lambda}(y)=c_{1,\lambda}e^{\mu^L_+y}$ and note that
\begin{align}
\bigg |\frac{w_{1,2,\lambda}(y)}{q_L(y)}\bigg |\le |c_{1,\lambda}|e^{\frac{1}{2}{\rm Re}(\mu^L_+-\mu^L_-)\eta_*},\qquad\;\,y\in (-\infty,\eta^*].
\label{idro-8}
\end{align}
Hence, we need to estimate the constant $c_{1,\lambda}$. For this purpose, we observe that
\begin{align}
|c_{1,\lambda}|\le &\frac{|A-\mu^H_{-}+\mu^L_-|}{|\mu^L_+-\mu^L_-||{\mathcal D}_{\lambda}|}\bigg |\int_{-\infty}^{\eta^*}e^{-\mu^L_-s}f_1(s)ds\bigg |e^{{\rm Re}(\mu^L_--\mu^L_+)\eta^*}\notag\\
&+\frac{1}{|{\mathcal D}_{\lambda}|}\bigg |\int_{\eta^*}^{+\infty}e^{-\mu^H_+s}f_2(s)ds\bigg |e^{{\rm Re}(\mu^H_+-\mu^L_+)\eta^*}\notag\\
\le & \frac{\sigma_L|A-\mu^H_{-}+\mu^L_-|}{2\sqrt{|\lambda|}|{\mathcal D}_{\lambda}|}\sup_{s\le\eta}\bigg |\frac{f_1(s)}{q_L(s)}\bigg |\bigg (\int_{-\infty}^{\eta^*}e^{\frac{1}{2}{\rm Re}(\mu^L_+-\mu^L_-)s}ds\bigg )e^{{\rm Re}(\mu^L_--\mu^L_+)\eta^*}\notag\\
&+\frac{1}{|{\mathcal D}_{\lambda}|}\sup_{s\ge\eta}\bigg |\frac{f_2(s)}{q_H(s)}\bigg |\bigg (\int_{\eta^*}^{+\infty}e^{-\frac{1}{2}{\rm Re}(\mu^H_+-\mu^H_-)s}ds\bigg )
e^{{\rm Re}(\mu^H_+-\mu^L_+)\eta^*}\notag\\
\le & \frac{\sigma_L|A-\mu^H_{-}+\mu^L_-|}{\sqrt{|\lambda|}|{\mathcal D}_{\lambda}|}\frac{1}{{\rm Re}(\mu^L_+-\mu^L_-)}\sup_{s\le\eta}\bigg |\frac{f_1(s)}{q_L(s)}\bigg |
e^{-\frac{1}{2}{\rm Re}(\mu^L_+-\mu^L_+)\eta^*}\notag\\
&+\frac{2}{|{\mathcal D}_{\lambda}|}\frac{1}{{\rm Re}(\mu^H_+-\mu^H_-)}\sup_{s\ge\eta}\bigg |\frac{f_2(s)}{q_H(s)}\bigg |
e^{\frac{1}{2}{\rm Re}(\mu^H_++\mu^H_--2\mu^L_+)\eta^*}.
\label{idro-1}
\end{align}
Note that
\begin{align*}
{\mathcal D}_{\lambda}=&A-\mu^H_-+\mu^L_+\\
=&\bigg (1-\frac{2\delta}{\sigma_L^2}\bigg )\frac{1}{\sigma_H^2}
(\sigma^2_L-\sigma^2_H)+\frac{1}{2}(c_H-c_L)
+\sqrt{\frac{1}{4}(c_H-1)^2+\frac{2\lambda}{\sigma_H^2}}
+\sqrt{\frac{1}{4}(c_L-1)^2+\frac{2\lambda}{\sigma_L^2}}\\
=&(1-c_L)\bigg (\frac{c_H}{c_L}-1\bigg )+\frac{1}{2}(c_H-c_L)
+\sqrt{\frac{1}{4}(c_H-1)^2+\frac{2\lambda}{\sigma_H^2}}
+\sqrt{\frac{1}{4}(c_L-1)^2+\frac{2\lambda}{\sigma_L^2}}\\
=&\frac{c_H}{c_L}+\frac{1}{2}(c_L-c_H)-1+
\sqrt{\frac{1}{4}(c_H-1)^2+\frac{2\lambda}{\sigma_H^2}}
+\sqrt{\frac{1}{4}(c_L-1)^2+\frac{2\lambda}{\sigma_L^2}},
\end{align*}
so that
\begin{align}
|{\mathcal D}_{\lambda}|\ge &
\sqrt{|\lambda|}\bigg |\sqrt{\frac{1}{4\lambda}(c_H-1)^2+\frac{2}{\sigma_H^2}}
+\sqrt{\frac{1}{4\lambda}(c_L-1)^2+\frac{2}{\sigma_L^2}}\bigg |-\frac{c_H}{c_L}+\frac{1}{2}(c_L-c_H)-1\notag\\
\ge & C_1\sqrt{|\lambda|}
\label{idro-2}
\end{align}
for a positive constant $C_1$ and every $\lambda\in\C$ with sufficiently large real part. In a completely similar way, we can check that there exists a positive constant $C_2$ such that
\begin{align}
|A-\mu^H_-+\mu^L_-|\ge C_2\sqrt{|\lambda|}
\label{idro-3}
\end{align}
for every $\lambda\in\C$ with sufficiently large real part.

From \eqref{idro-1}-\eqref{idro-3} we deduce that
\begin{align*}
|c_{1,\lambda}|\le \frac{C_3}{|\lambda|}\bigg (\sup_{s\le\eta}\bigg |\frac{f_1(s)}{q_L(s)}\bigg |e^{-\frac{1}{2}{\rm Re}(\mu^L_+-\mu^L_-)\eta^*}
+\sup_{s\le\eta}\bigg |\frac{f_1(s)}{q_L(s)}\bigg |e^{\frac{1}{2}{\rm Re}(\mu^H_++\mu^H_--2\mu^L_+)\eta^*}\bigg )
\end{align*}
for every $\lambda\in\C$, with sufficiently large real part, and some positive constant $C_3$, independent of $\lambda$, and then that
\begin{align}
\bigg |\frac{w_{1,2,\lambda}(y)}{q_L(y)}\bigg |
\le &\frac{C_3}{|\lambda|}\bigg (\sup_{s\le\eta}\bigg |\frac{f_1(s)}{q_L(s)}\bigg |
+\sup_{s\le\eta}\bigg |\frac{f_1(s)}{q_L(s)}\bigg |e^{\frac{1}{2}{\rm Re}(\mu^H_++\mu^H_--\mu^L_+-\mu^L_=)\eta^*}\bigg )\notag\\
=&\frac{C_3}{|\lambda|}\bigg (\sup_{s\le\eta}\bigg |\frac{f_1(s)}{q_L(s)}\bigg |
+\sup_{s\le\eta}\bigg |\frac{f_1(s)}{q_L(s)}\bigg |e^{(c_H-c_L)\eta^*}\bigg )
\label{idro-5}
\end{align}
for every $y\in (-\infty,\eta^*]$ and the same values of $\lambda$ as above.
From \eqref{idro-4} and \eqref{idro-5} it follows that
\begin{align}
\sup_{y\le \eta^*}\bigg |\frac{w_{1,\lambda}(y)}{q_L(y)}\bigg |
\le \frac{C_4}{|\lambda|}\|\bm{f}\|_{{\mathscr X}}
\label{idro-6}
\end{align}
for every $\lambda\in\C$ with ${\rm Re}\lambda\ge M_1$. Here, $C_4$ and $M_1$ are positive constants, independent of $\lambda$.

To complete the estimate of the function $w_{2,\lambda}$, we need to consider the term $d_{1,\lambda}e^{\mu^H_-y}$. For this purpose, we observe that
\begin{align*}
|d_{1,\lambda}|\le &\frac{1}{|{\mathcal D}_{\lambda}|}\sup_{s\le\eta^*}\bigg |\frac{f_1(s)}{q_L(s)}\bigg|\bigg (\int_{-\infty}^{\eta^*}e^{\frac{1}{2}{\rm Re}(\mu^L_+-\mu^L_-)s}ds\bigg )
e^{{\rm Re}(\mu^L_--\mu^L_+)\eta^*}\\
&+\frac{|A-\mu^H_-+\mu^L_+|}{(\mu^H_+-\mu^H_-)|{\mathcal D}_{\lambda}|}\sup_{s\ge\eta^*}\bigg |\frac{f_2(s)}{q_H(s)}\bigg |
\bigg (\int_{\eta^*}^{+\infty}e^{-\frac{1}{2}{\rm Re}(\mu^H_+-
\mu^H_-)s}ds\bigg )e^{{\rm Re}(\mu^H_+-\mu^H_-)\eta^*}\\
\le &\frac{2}{|{\mathcal D}_{\lambda}|}\frac{1}{{\rm Re}(\mu^L_+-\mu^L_-)}\sup_{s\le\eta^*}\bigg |\frac{f_1(s)}{q_L(s)}\bigg |e^{-\frac{1}{2}{\rm Re}(\mu^L_+-\mu^L_-)\eta^*}\\
&+\frac{|A-\mu^H_-+\mu^L_+|}{(\mu^H_+-\mu^H_-)|{\mathcal D}_{\lambda}|}\frac{2}{{\rm Re}(\mu^H_+-\mu^H_-)}\sup_{s\ge\eta^*}\bigg |\frac{f_2(s)}{q_H(s)}\bigg |e^{\frac{1}{2}{\rm Re}(mu^H_+-\mu^H_-)\eta^*}.
\end{align*}

It follows that
\begin{align}
|d_{1,\lambda}|e^{-\frac{1}{2}{\rm Re}(\mu^H_+-\mu^H_-)y}
\le &\frac{2}{|{\mathcal D}_{\lambda}|}\frac{1}{{\rm Re}(\mu^L_+-\mu^L_-)}\sup_{s\le\eta^*}\bigg |\frac{f_1(s)}{q_L(s)}\bigg |e^{\frac{1}{2}{\rm Re}(\mu^L_++\mu^L_--\mu^H_+-\mu^H_-)\eta^*}\notag\\
&+\frac{|A-\mu^H_-+\mu^L_+|}{(\mu^H_+-\mu^H_-)|{\mathcal D}_{\lambda}|}\frac{2}{{\rm Re}(\mu^H_+-\mu^H_-)}\sup_{s\ge\eta^*}\bigg |\frac{f_2(s)}{q_H(s)}\bigg |\notag\\
\le & \frac{C_5}{|\lambda|}\|\bm{f}\|_{{\mathscr X}}
\label{idro-9}
\end{align}
for every $y\in [\eta^*,+\infty)$ and $\lambda\in\C$, with sufficiently large real part.
From \eqref{idro-8} and \eqref{idro-9} we deduce that
there exists a positive constant $C_6$ such that
\begin{equation}
\sup_{y\ge\eta^*}\bigg |\frac{f_2(y)}{q_H(y)}\bigg |\le C_6\|\bm{f}\|_{{\mathscr X}}
\label{idro-10}
\end{equation}
for every $\lambda\in\C$ with ${\rm Re}\lambda\ge M_2$ and some positive constants $C_6$, independent of $\lambda$, and $M_2$.

From \eqref{idro-6} and \eqref{idro-10}, estimate \eqref{idro-11} follows at once, with $M=\max\{M_1,M_2\}$. The proof is now complete.
\end{proof}

For further use, the following corollary will be particularly useful.

\begin{corollary}
For every $\lambda\in\rho(L)$, every $\bm{f}\in\mathscr{X}$, every $g\in\R$, the Cauchy problem
\begin{align}
\left\{
\begin{array}{l}
\lambda\bm{w}-\mathcal{L}\bm{w}=\bm{f},\\
{\mathcal B}\bm{w}=(0,a),
\end{array}
\right.
\label{boundary-pb-2}
\end{align}
admits a unique solution $\bm{w}\in {\mathscr X}_2$. Moreover, there exists a positive constant $C$, independent of the data and of $\lambda$, such that
\begin{equation}
|\lambda|\|\bm{w}\|_{\mathscr X}\le C(\|\bm{f}\|+\sqrt{|\lambda|}|a|)
\label{estimate-boundary}
\end{equation}
for $\lambda\in\C$ with sufficiently large real part.
\end{corollary}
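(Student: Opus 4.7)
The plan is to use linearity to decompose $\bm{w}=\bm{w}^{(1)}+\bm{w}^{(2)}$, where $\bm{w}^{(1)}:=R(\lambda,L)\bm{f}$ solves $\lambda\bm{w}^{(1)}-\mathcal{L}\bm{w}^{(1)}=\bm{f}$ with $\mathcal{B}\bm{w}^{(1)}=\bm{0}$, so that Theorem \ref{thm:gen} immediately gives $\|\bm{w}^{(1)}\|_{\mathscr{X}}\le C|\lambda|^{-1}\|\bm{f}\|_{\mathscr{X}}$ for ${\rm Re}\,\lambda$ sufficiently large, while $\bm{w}^{(2)}$ solves the homogeneous equation $\lambda\bm{w}^{(2)}-\mathcal{L}\bm{w}^{(2)}=\bm{0}$ with inhomogeneous boundary data $\mathcal{B}\bm{w}^{(2)}=(0,a)$. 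Constructing and estimating $\bm{w}^{(2)}$ is the only real content.

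For $\bm{w}^{(2)}$ I reuse the analysis behind \eqref{w1}--\eqref{w2}: once $\bm{f}=\bm{0}$ the integral contributions vanish, and since the competing exponentials $e^{\mu^L_-y}$ on the left and $e^{\mu^H_+y}$ on the right fail to lie in $\mathscr{X}$ when $\lambda\in\rho(L)$, the only admissible ansatz is
\begin{equation*}
w^{(2)}_1(y)=c_1\,e^{\mu^L_+y}\ \ (y\le\eta^*),\qquad w^{(2)}_2(y)=d_1\,e^{\mu^H_-y}\ \ (y\ge\eta^*).
\end{equation*}
Setting $v_0:=c_1 e^{\mu^L_+\eta^*}$, the continuity condition $B_0\bm{w}^{(2)}=0$ forces $d_1 e^{\mu^H_-\eta^*}=v_0$, and then $B_1\bm{w}^{(2)}=a$ collapses to $(\mu^H_--\mu^L_+-A)v_0=a$, i.e.\ $-\mathcal{D}_\lambda\,v_0=a$. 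By Theorem \ref{thm:gen}, $\lambda\in\rho(L)$ implies $\mathcal{D}_\lambda\neq 0$, so $v_0=-a/\mathcal{D}_\lambda$ is uniquely determined, and with it $c_1=-a e^{-\mu^L_+\eta^*}/\mathcal{D}_\lambda$ and $d_1=-a e^{-\mu^H_-\eta^*}/\mathcal{D}_\lambda$. Uniqueness for the full problem follows because the difference of two $\mathscr{X}_2$-solutions belongs to $\ker(\lambda-L)=\{\bm{0}\}$.

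To estimate $\bm{w}^{(2)}$, set $\nu_L:=\sqrt{\tfrac{1}{4}(c_L-1)^2+2\lambda/\sigma_L^2}=\mu^L_++\tfrac{1}{2}(c_L-1)$, which has positive real part for ${\rm Re}\,\lambda$ large; then $w^{(2)}_1(y)/q_L(y)=c_1 e^{\nu_L y}$ on $(-\infty,\eta^*]$, so the supremum is attained at $y=\eta^*$, and after inserting the value of $c_1$ the factors $e^{-{\rm Re}(\mu^L_+)\eta^*}$ and $e^{{\rm Re}(\nu_L)\eta^*}$ collapse into the $\lambda$-independent constant $e^{(c_L-1)\eta^*/2}$. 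A symmetric calculation treats the right half-line, yielding
\begin{equation*}
\|\bm{w}^{(2)}\|_{\mathscr{X}}\le C\,\frac{|a|}{|\mathcal{D}_\lambda|}.
\end{equation*}
Invoking the lower bound $|\mathcal{D}_\lambda|\ge C_1\sqrt{|\lambda|}$ already established in \eqref{idro-2} (valid precisely for ${\rm Re}\,\lambda$ sufficiently large) produces $\|\bm{w}^{(2)}\|_{\mathscr{X}}\le C|a|/\sqrt{|\lambda|}$; adding the bound on $\bm{w}^{(1)}$ and multiplying by $|\lambda|$ yields \eqref{estimate-boundary}. The proof poses no serious obstacle: the only care needed is to restrict to the regime ${\rm Re}\,\lambda\ge M$ in which both Theorem \ref{thm:gen}'s sectorial estimate and \eqref{idro-2} hold with uniform constants, exactly as the corollary's hypothesis requires.
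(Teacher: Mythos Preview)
Your proof is correct and follows essentially the same route as the paper: decompose $\bm{w}=R(\lambda,L)\bm{f}+\bm{w}^{(2)}$, write $\bm{w}^{(2)}$ explicitly as the unique $\mathscr{X}$-decaying exponential pair determined by $\mathcal{D}_\lambda$, and combine the resolvent bound \eqref{idro-11} with the lower bound \eqref{idro-2} on $|\mathcal{D}_\lambda|$. (Your sign $v_0=-a/\mathcal{D}_\lambda$ is in fact the correct one from the definition of $B_1$; the paper's displayed formula carries the opposite sign, but this is immaterial for the norm estimate.)
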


\begin{proof}
In view of the results in Theorem \ref{thm:gen}, it suffices to deal with the Cauchy problem
\begin{align*}
\left\{
\begin{array}{l}
\lambda\bm{w}-\mathcal{L}\bm{w}=\bm{0},\\
{\mathcal B}\bm{w}=(0,a).
\end{array}
\right.
\end{align*}
Adapting the arguments used in the quoted theorem, it can be easily checked that the solution to the previous equation is given
by $\bm{w}=(w_1,w_2)$, where
\begin{align*}
w_1(y)&=\frac{a}{{\mathcal D}_{\lambda}}e^{\mu_L^+(y-\eta^*)},\qquad\;\,y\in (-\infty,\eta^*],\\
w_2(y)&=\frac{a}{{\mathcal D}_{\lambda}}e^{\mu_H^-(y-\eta^*)},\qquad\;\,y\in [\eta^*,+\infty).
\end{align*}

Clearly, $\bm{w}\in\mathscr{X}_2$ and taking \eqref{idro-2} into account, we can show that $\|\bm{w}\|_{\mathscr{X}}\le |\lambda|^{-\frac{1}{2}}|a|$.

Estimate \eqref{estimate-boundary} now follows from the previous estimate and \eqref{idro-11}, observing that the solution to the Cauchy problem \eqref{boundary-pb-2} is given by $R(\lambda,L)+\bm{w}$.
\end{proof}

We conclude this section by characterizing some interpolation spaces between $\mathscr{X}$ and $D(L)$. For this purpose, we introduce the following definition.

\begin{definition}
For every $\alpha\in (0,1)$, we denote by
$\mathscr{X}_{\alpha}$, the set of all functions $\bm{w}\in\mathscr{X}$ such that the functions $\dfrac{w_1}{q_L}$ and
$\dfrac{w_2}{q_H}$ are $\alpha$-H\"older continuous in $(-\infty,\eta^*]$ and in $[\eta^*,+\infty)$, respectively.
It is normed, by setting
\begin{eqnarray*}
\|\bm{w}\|_{\mathscr{X}_{\alpha}}=\left\|\frac{w_1}{q_L}\right\|_{C^{\alpha}_b((-\infty,\eta^*])}+\left\|\frac{w_2}{q_H}\right\|_{C^{\alpha}_b([\eta^*,+\infty))}
\end{eqnarray*}
for every $\bm{w}\in\mathscr{X}_{\alpha}$.

We also introduce the space $\mathscr{X}_{2+\alpha}$, defined as the set of all functions $\bm{w}\in\mathscr{X}_2$ such that
$D_{yy}\bm{w}=(D_{yy}w_1,D_{yy}w_2)\in\mathscr{X}_{\alpha}$. It is normed by setting
\begin{eqnarray*}
\|\bm{w}\|_{\mathscr{X}_{2+\alpha}}=
\|\bm{w}\|_{\mathscr{X}_2}+\left [\frac{D_{yy}w_1}{q_L}\right ]_{C^{\alpha}_b((-\infty,\eta^*])}+\left [\frac{D_{yy}w_2}{q_H}\right ]_{C^{\alpha}_b([\eta^*,+\infty))}
\end{eqnarray*}
for every $\bm{w}\in \mathscr{X}_{2+\alpha}$.
\end{definition}

\begin{theorem}
\label{thm-7.7}
For every $\alpha\in (0,1)$ it holds that
\begin{eqnarray*}
D_L(\alpha/2,\infty)=\{\bm{w}\in\mathscr{X}_{\alpha}: B_0\bm{w}=0\},\qquad\, D_L(1+\alpha/2,\infty)=\{\bm{w}\in\mathscr{X}_2:
{\mathcal B}\bm{w}=\bm{0}, B_0L\bm{w}=0\},
\end{eqnarray*}
with equivalence of the corresponding norms. Moreover, $\{\bm{w}\in\mathscr{X}_1: B_0\bm{w}=0\}\hookrightarrow D_L(1/2,\infty)$.
\end{theorem}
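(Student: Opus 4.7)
The plan combines the weighted-to-unweighted reduction of Lemma \ref{lemma-char-eq} with the resolvent characterization of interpolation spaces for sectorial operators, in the spirit of \cite{BL18,BLZ20,ABLZ20} and the general theory developed in \cite{Lunardi96}. Under the isomorphism $\bm{w} \mapsto (w_1/q_L,w_2/q_H)$ of Lemma \ref{lemma-char-eq}, the spaces $\mathscr{X}_k$ ($k=0,2$) correspond to the standard $C^k_b$-spaces on $(-\infty,\eta^*]$ and $[\eta^*,+\infty)$, and $\mathscr{X}_\alpha$, $\mathscr{X}_{2+\alpha}$ correspond to the natural H\"older spaces; moreover, $L$ is conjugate to a second-order elliptic operator $\tilde L$ with the same principal part (constant coefficients on each half-line) and with transmission conditions of the same orders ($0$ for $B_0$, $1$ for $B_1$, up to modified lower-order terms generated by the weights). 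Since any isomorphism preserves all interpolation spaces, the statement reduces to the corresponding characterizations for $\tilde L$ in these unweighted H\"older spaces.

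For $D_L(\alpha/2,\infty)$ with $\alpha\in (0,1)$, I would start from the equivalent seminorm $\bm{w}\mapsto \sup_{\lambda\ge \lambda_0}\lambda^{\alpha/2}\|LR(\lambda,L)\bm{w}\|_{\mathscr{X}}$ and plug in the explicit resolvent formulas \eqref{w1}-\eqref{w2} from Theorem \ref{thm:gen}, with the data $\bm{f}=\bm{w}$. The resulting expression splits into a bulk convolution term, whose H\"older estimate reduces to the classical one-dimensional heat-kernel calculation, and a boundary term whose coefficients $c_{1,\lambda}$, $d_{1,\lambda}$ carry the coupling across $\eta^*$. Using the bound $|\mathcal{D}_\lambda|\ge C\sqrt{|\lambda|}$ established in the proof of Theorem \ref{thm:gen}, the most singular piece of that boundary term factors out precisely the zero-order jump $w_2(\eta^*)-w_1(\eta^*)=B_0\bm{w}$; hence $B_0\bm{w}=0$ is the exact condition for the seminorm to be finite, modulo remainders controlled by the weighted $\alpha$-H\"older norm of $\bm{w}$. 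Conversely, any $\bm{w}\in D_L(\alpha/2,\infty)$ inherits H\"older regularity from the functorial embedding $D_L(\alpha/2,\infty)=(\mathscr{X},D(L))_{\alpha/2,\infty}\hookrightarrow (\mathscr{X},\mathscr{X}_2)_{\alpha/2,\infty}$ (identified with $\mathscr{X}_\alpha$ via Lemma \ref{lemma-char-eq}), while $B_0\bm{w}=0$ is obtained by passing to the limit $\lambda\to+\infty$ in the identity $B_0R(\lambda,L)\bm{w}=0$.

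The identity for $D_L(1+\alpha/2,\infty)$ then follows by the reiteration property $\bm{w}\in D_L(1+\alpha/2,\infty)\iff \bm{w}\in D(L)$ and $L\bm{w}\in D_L(\alpha/2,\infty)$: membership in $D(L)$ delivers $\mathcal{B}\bm{w}=\bm{0}$, while the previous step applied to $L\bm{w}$ supplies the regularity $L\bm{w}\in\mathscr{X}_\alpha$ together with the compatibility $B_0L\bm{w}=0$; elliptic bootstrap in each half-line then upgrades $\bm{w}$ to $\mathscr{X}_{2+\alpha}$. The embedding $\{\bm{w}\in\mathscr{X}_1:B_0\bm{w}=0\}\hookrightarrow D_L(1/2,\infty)$ is obtained by re-running the resolvent estimate of the previous step at the critical exponent $\alpha=1$: the Lipschitz regularity of $\bm{w}$ absorbs the bulk term with the $\lambda^{-1/2}$ rate, and $B_0\bm{w}=0$ again cancels the singular boundary contribution. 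The hardest point is this sharp estimate of the coupling term at $\eta^*$: showing that $B_0\bm{w}=0$ produces exactly the Taylor-type cancellation needed to gain the critical power of $\lambda$, and verifying that the first-order jump condition $B_1$, which in this scale only becomes visible at the threshold $D_L(3/4,\infty)$, does not enter the description of $D_L(\alpha/2,\infty)$ and is already absorbed into $D(L)$ at the level $1+\alpha/2$.
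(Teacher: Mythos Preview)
Your plan is correct, and for the reverse inclusion $D_L(\alpha/2,\infty)\hookrightarrow\{\bm{w}\in\mathscr{X}_{\alpha}:B_0\bm{w}=0\}$, the reiteration argument for $D_L(1+\alpha/2,\infty)$, and the critical embedding at $\alpha=1$, it matches the paper almost exactly (your limiting argument $B_0(\lambda R(\lambda,L)\bm{w})=0$, $\lambda\to\infty$, is in fact the same mechanism the paper phrases as ``density of $D(L)$'').

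The genuine difference is in the forward inclusion $\{\bm{w}\in\mathscr{X}_{\alpha}:B_0\bm{w}=0\}\hookrightarrow D_L(\alpha/2,\infty)$. You propose to insert $\bm{f}=\bm{w}$ directly into the explicit resolvent kernels \eqref{w1}--\eqref{w2} and extract the power $\lambda^{-\alpha/2}$ by a Taylor-type cancellation at $\eta^*$, identifying $B_0\bm{w}$ as the coefficient of the worst boundary term. The paper instead uses $B_0\bm{w}=0$ to \emph{glue} $w_1,w_2$ into a single function $\zeta\in C^{\alpha}_b(\R)$, mollifies it at scale $1/\sqrt{\lambda}$, and splits $LR(\lambda,L)\bm{w}=LR(\lambda,L)(\bm{w}-\bm{\zeta}_{\sqrt{\lambda}})+LR(\lambda,L)\bm{\zeta}_{\sqrt{\lambda}}$: the first piece is handled by $\|\bm{w}-\bm{\zeta}_{\sqrt{\lambda}}\|_{\mathscr{X}}\le C\lambda^{-\alpha/2}$ and the uniform bound on $LR(\lambda,L)$, while the second is reduced to the inhomogeneous boundary estimate \eqref{estimate-boundary}. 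Your route is more explicit and self-contained (it does not rely on Corollary~3.5), but the bookkeeping of the coupling coefficients $c_{1,\lambda},d_{1,\lambda}$ in the weighted norm is heavier; the paper's mollification avoids all of that at the price of first establishing \eqref{estimate-boundary}. A small side remark: the threshold at which $B_1$ would enter the description of $D_L(\theta,\infty)$ is $\theta=1/2$ rather than $3/4$ (first-order transmission conditions behave like Neumann conditions here), but since the statement only concerns $\theta=\alpha/2<1/2$ and the one-sided embedding at $\theta=1/2$, this does not affect your argument.
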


\begin{proof}
Throughout the proof, $C$ denotes a positive constant, independent of $t$, $\lambda$ and the functions that we will consider, which may vary from line to line.

Let us fix a function $\bm{w}\in{\mathscr X}_{\alpha}$ (here, $\alpha\in (0,1]$). Since $w_1(\eta^*)=w_2(\eta^*)$, the function $\zeta:\R\to\R$, defined by
\begin{eqnarray*}
\zeta(y)=
\left\{
\begin{array}{ll}
w_1(y), & y\le\eta^*,\\
w_2(y), & y>\eta^*
\end{array}
\right.
\end{eqnarray*}
is $\alpha$-H\"older continuous in $\R$. Moreover, its $C^{\alpha}_b(\R)$-norm coincides with the ${\mathscr X}_{\alpha}$-norm of $\bm{w}$.

We regularize function $\zeta$ by convolution. For this purpose, let $\varphi\in C^{\infty}_c(\R)$ be a nonnegative function with $L^1$-norm over $\R$ equal to one. For every $t>0$, we set
\begin{align*}
\zeta_t(x)=t\int_{\R}\varphi(ty)\zeta(x-y)dy,\qquad x\in\R.
\end{align*}

It is an easy task to check that
\begin{align}
&\|\zeta_t-\zeta\|_{\infty}\le Ct^{-\alpha}\|\bm{w}\|_{{\mathscr X}_{\alpha}},
\label{estim-1}\\
&\|D_x\zeta_t\|_{\infty}\le Ct^{1-\alpha}\|\bm{w}\|_{{\mathscr X}_{\alpha}},
\label{estim-2}
\\
&\|D_{xx}\zeta_t\|_{\infty}\le Ct^{2-\alpha}\|\bm{w}\|_{{\mathscr X}_{\alpha}}
\label{estim-3}
\end{align}
for every $t>0$.

Now, we are ready to prove that $\bm{w}\in D_L(\alpha/2,\infty)$. For this purpose, we recall that $D_L(\alpha/2,\infty)$ can be characterized as the set of all functions $\bm{w}\in{\mathscr X}$ for which there exists $\lambda_0>0$ such that
\begin{align*}
[\bm{w}]_{D_L(\alpha/2,\infty)}:=\sup_{\lambda\ge\lambda_0}\lambda^{\frac{\alpha}{2}}\|LR(\lambda,L)\bm{w}\|_{\mathscr{X}}<+\infty
\end{align*}
and the norm of $D_L(\alpha/2,\infty)$ is equivalent to the norm $\|\cdot\|_{\mathscr{X}}+[\,\cdot\,]_{D_L(\alpha/2,\infty)}$.

We introduce the function $\bm{\zeta}_{\sqrt{\lambda}}$, whose components are the restrictions of $\zeta_{\sqrt{\lambda}}$ to $(-\infty,\eta^*]$ and to $[\eta^*,+\infty)$, respectively. Next, we split,
\begin{align*}
LR(\lambda,L)\bm{w}=LR(\lambda,L)(\bm{w}-\bm{\zeta}_{\sqrt{\lambda}})
+LR(\lambda,L)\bm{\zeta}_{\sqrt{\lambda}},
\end{align*}
so that
\begin{align}
\|LR(\lambda,L)\bm{w}\|_{\mathscr X}\le &\|LR(\lambda,L)(\bm{w}-\bm{\zeta}_{\sqrt{\lambda}})\|_{\mathscr X}+
+\|LR(\lambda,L)\bm{\zeta}_{\sqrt{\lambda}}\|_{\mathscr{X}}\notag\\
\le & C\lambda^{-\frac{\alpha}{2}}\|LR(\lambda,L)\|_{L(\mathscr X)}\|\bm{w}\|_{\mathscr{X}_{\alpha}}+
\|LR(\lambda,L)\bm{\zeta}_{\sqrt{\lambda}}\|_{\mathscr{X}}.
\label{tg}
\end{align}

To estimate the second term in the last side of the previous inequality, we rewrite the function $\bm{u}_{\lambda}:=LR(\lambda,L)\bm{\zeta}_{\sqrt{\lambda}}$ in the following form:
\begin{align*}
\bm{u}_{\lambda}=(L-\lambda+\lambda)R(\lambda,L)\bm{\zeta}_{\sqrt{\lambda}}=-\bm{\zeta}_{\sqrt{\lambda}}+\lambda R(\lambda,L)\bm{\zeta}_{\sqrt{\lambda}},
\end{align*}
so that, applying operator ${\mathcal L}$ to the first and the last side of the previous formula, we deduce that
 $\bm{u}_{\lambda}$ is a solution to the equation $\lambda\bm{u}_{\lambda}-{\mathcal L}\bm{u}_{\lambda}=-{\mathcal L}\bm{\zeta}_{\sqrt{\lambda}}$.
Moreover,
\begin{align*}
{\mathcal B}\bm{u}_{\lambda}=-{\mathcal B}\bm{\zeta}_{\sqrt{\lambda}}
+\lambda {\mathcal B}R(\lambda,L)\bm{\zeta}_{\sqrt{\lambda}}
=-{\mathcal B}\bm{\zeta}_{\sqrt{\lambda}}.
 \end{align*}

Using \eqref{estim-2} and \eqref{estim-3}, we can easily infer that
\begin{align*}
\|{\mathcal L}\bm{\zeta}_{\sqrt{\lambda}}\|_{\mathscr{X}}\le C\lambda^{1-\frac{\alpha}{2}}\|\bm{w}\|_{\mathscr{X}_{\alpha}}
\end{align*}
and
\begin{align*}
\|{\mathcal B}\bm{\zeta}_{\sqrt{\lambda}}\|_{\mathscr{X}}\le C\lambda^{\frac{1}{2}-\frac{\alpha}{2}}\|\bm{w}\|_{\mathscr{X}_{\alpha}}
\end{align*}
for every $\lambda\ge 1$.

Now, applying estimate \eqref{estimate-boundary}, we deduce that, if $\lambda$ is sufficiently large, then
\begin{align*}
\lambda\|\bm{u}_{\lambda}\|_{\mathscr{X}}\le C(\|{\mathcal L}\bm{w}_{\sqrt{\lambda}}\|_{\mathscr{X}}+\sqrt{|\lambda|}\|{\mathcal B}\bm{\zeta}_{\sqrt{\lambda}}\|_{\mathscr{X}})\le C|\lambda|^{1-\frac{\alpha}{2}}\|\bm{\zeta}\|_{{\mathscr X}_{\alpha}}
\end{align*}
so that
\begin{equation*}
\|LR(\lambda,L)\bm{\zeta}_{\sqrt{\lambda}}\|_{\mathscr X}
=\|\bm{u}_{\lambda}\|_{\mathscr X}\le C|\lambda|^{-\frac{\alpha}{2}}\|\bm{w}\|_{\mathscr{X}_{\alpha}}
\end{equation*}

This estimate and \eqref{tg} show that there exists $\lambda_0>0$ such that
\begin{eqnarray*}
\lambda^{\frac{\alpha}{2}}\|LR(\lambda,L)\bm{w}_{\sqrt{\lambda}}\|_{\mathscr X}
\le C\|\bm{w}\|_{{\mathscr X}_{\alpha}}
\end{eqnarray*}
for every $\lambda\ge\lambda_0$. The embedding $\{\bm{w}\in {\mathscr X}_{\alpha}: B_0\bm{w}=0\}\hookrightarrow D_L(\alpha,\infty)$ follows for every $\alpha\in (0,1]$.

Let us now fix $\alpha\in (0,1)$ and prove that $D_L(\alpha/2,\infty)$ is continuously embedded into
$\{\bm{w}\in {\mathcal X}_{\alpha}: B_0\bm{w}=0\}$. For this purpose, we recall that $D_L(\alpha/2,\infty)=(\mathscr{X},D(L))_{\alpha/2,\infty}$ with equivalence of the corresponding norms. Since
$D(L)$ is continuously embedded into ${\mathscr X}_2$,
it follows that $D_L(\alpha/2,\infty)\hookrightarrow (\mathscr{X},\mathscr{X}_2)_{\alpha/2,\infty}$. To characterize this latter space, we take advantage of Lemma
\ref{lemma-char-eq} and introduce the isomorphism $T:\mathscr{X}\to C_b((-\infty,\eta^*];\C)\times C_b([\eta^*,+\infty);\C)$,
defined by $T\bm{w}=\left (\frac{w_1}{q_L},\frac{w_2}{q_H}\right )$ for every $\bm{w}\in {\mathscr X}$,  which is also bijective
between $\mathscr{X}_2$ and $C_b^2((-\infty,\eta^*];\C)\times C_b([\eta^*,+\infty);\C)$. The theory of interpolation spaces, it follows that $T$ is also bijective between
$(\mathscr{X},\mathscr{X}_2)_{\alpha,\infty}$ and $(C_b((-\infty,\eta^*];\C)\times C_b([\eta^*,+\infty);\C),C_b^2((-\infty,\eta^*];\C)\times C_b^2([\eta^*,+\infty);\C))_{\alpha/2,\infty}=
(C_b((-\infty,\eta^*];\C);C_b^2((-\infty,\eta^*];\C))_{\alpha/2,\infty}\times (C_b([\eta^*,+\infty);\C),C_b^2([\eta^*,+\infty);\C))_{\alpha/2,\infty}$, with equivalence of the corresponding norms.
Since this latter space coincides with $C_b^{\alpha}((-\infty,\eta^*];\C)\times C_b^{\alpha}([\eta^*,+\infty);\C)$, i.e., with the space $\mathscr{X}_{\alpha}$, with equivalence of the corresponding norms. We have so proved that $D_L(\alpha/2,+\infty)$ is continuously embedded into $\mathscr{X}_{\alpha}$. To complete the proof of the inclusion
$D_L(\alpha/2,+\infty)\hookrightarrow \{\bm{w}\in\mathscr{X}_{\alpha}: B_0\bm{w}\}$, we recall that $D(L)$ is dense in $D_L(\alpha/2,+\infty)$, with respect to the norm of this latter space. Thus, we can determine a sequence $\{\bm{v}_n\}\subset D(L)$ such that $\|\bm{v}_n-\bm{w}\|_{D_L(\alpha/2,+\infty)}$ converges to $0$ as $n$ tends to $+\infty$. In particular, $\|\bm{v}_n-\bm{w}\|_{\mathscr{X}}$ vanishes as $n$ tends to $+\infty$. Since $B_0\bm{v}_n=0$ for every $n\in\N$, letting $n$ tend to $+\infty$, we get the condition $B_0\bm{w}=0$.

The topological equivalence $D_L(\alpha/2,+\infty)=\{\bm{w}\in\mathcal{X}_{\alpha}: B_0\bm{w}=0\}$ is now checked.

\medskip

Let us now prove the second characterization, For this purpose, we begin by recalling that $D_L(1+\alpha/2,+\infty)$ is the set of all $\bm{w}\in D(L)$ such that $L\bm{w}\in \mathscr{X}_{\alpha}$. From Theorem \ref{thm:gen} it follows that $D(L)\subset\mathscr{X}_2$ and
\begin{equation}
\|\bm{w}\|_{D(L)}\le C_1\|\bm{w}\|_{\mathscr{X}_2},\qquad\;\, \bm{w}\in D(L).
\label{cane}
\end{equation}
On the other hand, since
\begin{eqnarray*}
L\bm{w}=\left (
\frac{1}{2}\sgL2 \frac{\partial^2 w_1}{\partial y^2} + \left(\delta -\frac{1}{2}\sgL2\right)\frac{\partial w_1}{\partial y},\frac{1}{2}\sgH2 \frac{\partial^2 w_2}{\partial y^2} + \left(\delta -
\frac{1}{2}\sgH2\right)\frac{\partial w_2}{\partial y}\right ),\qquad\;\,\bm{w}\in D(L),
\end{eqnarray*}
it follows that
\begin{equation}
\frac{\partial^2w_j}{\partial y^2}=\frac{2}{\sigma^2_L}\bigg ((L\bm{w})_j-\bigg (\delta-\frac{1}{2}\sigma^2_L\bigg )\frac{\partial w_j}{\partial y}\bigg ),\qquad\;\,j=1,2,
\label{Dyy-L}
\end{equation}
so that
\begin{equation}
\bigg\|\frac{D_{yy}w_1}{q_L}\bigg\|_{\infty}\le
C\left (\|L\bm{w}\|_{\mathscr{X}}+\bigg\|\frac{D_yw_1}{q_L}\bigg\|_{\infty}\right ).
\label{norm-equiv}
\end{equation}

Further, we recall that the space $C^1_b((-\infty,\eta^*])$ belongs to the class $J_{1/2}$ between $C_b((-\infty,\eta^*])$ and
$C^2_b((-\infty,\eta^*])$, which means that
\begin{eqnarray*}
\|f\|_{C^1_b((-\infty,\eta^*])}\le C\|f\|_{\infty}^{\frac{1}{2}}\|f\|_{C^2_b((-\infty,\eta^*])}^{\frac{1}{2}},\qquad\;\,f\in C^2_b((-\infty,\eta^*]).
\end{eqnarray*}

Applying this estimate with $f=\dfrac{w_1}{q_L}$, it follows that
\begin{align*}
\left\|\dfrac{D_yw_1}{q_L}\right\|_{\infty}-C
\left\|\dfrac{w_1}{q_L}\right\|_{\infty}\le
C\left\|\dfrac{w_1}{q_L}\right\|_{\infty}^{\frac{1}{2}}
\left (\left\|\dfrac{w_1}{q_L}\right\|_{\infty}+
\left\|\dfrac{D_yw_1}{q_L}\right\|_{\infty}+
\left\|\dfrac{D_{yy}w_1}{q_L}\right\|_{\infty}\right )^{\frac{1}{2}}.
\end{align*}
Therefore, using Cauchy-Schwarz inequality, from the previous inequality we deduce that
\begin{align*}
\left\|\dfrac{D_yw_1}{q_L}\right\|_{\infty}-C
\left\|\dfrac{w_1}{q_L}\right\|_{\infty}
\le \varepsilon \left\|\dfrac{D_{yy}w_1}{q_L}\right\|_{\infty}
+K_{\varepsilon}\left\|\dfrac{w_1}{q_L}\right\|_{\infty}
+C\varepsilon\left\|\dfrac{D_{y}w_1}{q_L}\right\|_{\infty}
\end{align*}
for every $\varepsilon>0$ and some positive constant $K_{\varepsilon}$, independent of $\bm{w}$ and blowing up as $\varepsilon$ tends to $0$. Taking $\varepsilon$ sufficiently small, we conclude that
\begin{align}
\left\|\dfrac{D_yw_1}{q_L}\right\|_{\infty}
\le \varepsilon \left\|\dfrac{D_{yy}w_1}{q_L}\right\|_{\infty}
+K_{\varepsilon}'\left\|\dfrac{w_1}{q_L}\right\|_{\infty}
\label{stima-J12}
\end{align}
for some positive constant $K_{\varepsilon}'$, independent of $w_1$. Replacing this inequality into \eqref{norm-equiv}, gives
\begin{eqnarray*}
\bigg\|\frac{D_{yy}w_1}{q_L}\bigg\|_{\infty}\le C\|L\bm{w}\|_{\mathscr{X}}+C\varepsilon \bigg\|\frac{D_{yy}w_1}{q_L}\bigg\|_{\infty}+CK_{\varepsilon}'\bigg\|\frac{w_1}{q_L}\bigg\|_{\infty}.
\end{eqnarray*}

Taking $\varepsilon$ smaller, if needed, we can get rid of the norm of $D_{yy}w_1$ from the right-hand side of the previous estimate and conclude that
\begin{equation}
\bigg\|\frac{D_{yy}w_1}{q_L}\bigg\|_{\infty}\le C\bigg (\|L\bm{w}\|_{\mathscr{X}}+\bigg\|\frac{w_1}{q_L}\bigg\|_{\infty}\bigg )
\le C\|\bm{w}\|_{D(L)}.
\label{stima-equiv-1}
\end{equation}

From \eqref{stima-J12} and \eqref{stima-equiv-1}, it follows that
\begin{align*}
\bigg\|\frac{w_1}{q_L}\bigg\|_{\infty}+\bigg\|\frac{D_{y}w_1}{q_L}\bigg\|_{\infty}+\bigg\|\frac{D_{yy}w_1}{q_L}\bigg\|_{\infty}
\le C\|\bm{w}\|_{D(L)}.
\end{align*}

Arguing in the same way, we can deal with the function $w_2$ and we conclude that
\begin{equation}
\|\bm{w}\|_{\mathscr{X}_2}\le C\|\bm{w}\|_{D(L)},\qquad\;\,\bm{w}\in D(L).
\label{salvato}
\end{equation}

Estimates \eqref{cane} and \eqref{salvato} show that the graph norm of $L$ is equivalent to the norm of $\mathscr{X}_2$ on $D(L)$.

We can now complete the characterization of $D_L(1+\alpha/2,+\infty)$. For this purpose, we fix $\bm{w}\in D_L(1+\alpha/2,+\infty)$. Then, $L\bm{w}\in\mathscr{X}_{\alpha}$, which in view of \eqref{Dyy-L}, implies that $D_{yy}\bm{w}\in \mathscr{X}_{\alpha}$, so that $\bm{w}\in\mathscr{X}_{2+\alpha}$. Moreover,
\begin{align*}
\left [\frac{D_{yy}w_1}{q_L}\right ]_{C^{\alpha}{(-\infty,\eta^*])}}
\le &\frac{2}{\sigma_L^2}\left (\|L\bm{w}\|_{\mathscr{X}_{\alpha}}
+\left |\delta-\frac{1}{2}\sigma_L^2\right |\left [\frac{D_yw_1}{q_L}\right ]_{C^{\alpha}((-\infty,\eta^*])}\right )\\
\le &C\left (\|L\bm{w}\|_{\mathscr{X}_{\alpha}}++\left |\delta-\frac{1}{2}\sigma_L^2\right |\left\|\frac{w_1}{q_L}\right\|_{C^1_b((-\infty,\eta^*])}\right )\\
\le &C(\|L\bm{w}\|_{\mathscr{X}_{\alpha}}+
\|\bm{w}\|_{\mathscr{X}_2})\\
\le &C\|\bm{w}\|_{D_L(1+\alpha/2,+\infty)}
\end{align*}
for every $\bm{w}\in D_L(1+\alpha/2,+\infty)$.
Clearly, $\mathcal{B}\bm{w}=\bm{0}$ since $\bm{w}\in D(L)$, Moreover, $B_0L\bm{w}=0$ since $L\bm{w}\in D_L(\alpha/2,+\infty)$.
We have so proved the topological inclusion $D_L(1+\alpha/2,+\infty)\hookrightarrow \mathscr{X}_{2+\alpha}$.

Viceversa, let us suppose that $\bm{w}\in\mathscr{X}_{2+\alpha}$ is such that $\mathcal{B}\bm{w}=0$ and $B_0L\bm{w}=0$. Then, $\bm{w}\in D(L)$ and $L\bm{w}\in\mathcal{X}_{2+\alpha}$. Since $B_0L\bm{w}=0$, from the characterization of the space $D_L(\alpha/2,+\infty)$, it follows that $L\bm{w}\in D_L(\alpha/2,+\infty)$. Therefore,
$\bm{w}\in D_L(1+\alpha/2,+\infty)$. Moreover, $\|L\bm{w}\|_{\mathscr{X}_{\alpha}}\le C\|\bm{w}\|_{\mathscr{X}_{2+\alpha}}$. From this estimate and the previous results, we conclude that $\mathscr{X}_{2+\alpha}\hookrightarrow D_L(1+\alpha/2,+\infty)$. The proof is now complete.
\end{proof}

\section{A thorough analysis of the dispersion relation}\label{dispersion_relation}
In this subsection, we analyze the space the solution to the dispersione relation ${\mathcal D}_{\lambda}$ in order to have a complete characterization of the spectrum of the operator $L$ and its localization with respect to the imaginary axis.

\begin{theorem}
The solutions of the dispersion relation ${\mathcal D}_{\lambda}=0$ are contained in the left-halfplane $\{\lambda\in\C: {\rm Re}\lambda>0\}$ if and only if
\begin{eqnarray*}
\left\{
\begin{array}{l}
(c_L-2)(c_H-c_L)\ge 0,\\[1mm]
(c_L-1)(2c_H^2+c_Hc_L^2-4c_Hc_L+c_L^2)\le 0,\\[1mm]
(c_L-c_H)(-\sigma_H^2c_Hc_L^2+2\sigma^2_Hc_Hc_L-2\sigma_H^2c_H-\sigma_H^2c_L^2+2\sigma_H^2c_L+2\sigma_L^2c_Hc_L-2\sigma_L^2c_H\\
\qquad\qquad\quad +\sigma_L^2c_L^3-3\sigma_L^2c_L^2+2\sigma_L^2c_L)\ge 0
\end{array}
\right.
\end{eqnarray*}
or
\begin{eqnarray*}
\left\{
\begin{array}{l}
(c_L-2)(c_H-c_L)\ge 0,\\[1mm]
(c_L-1)(2c_H^2+c_Hc_L^2-4c_Hc_L+c_L^2)\textcolor{red}{\le}0,\\[1mm]
(c_L-c_H)(-\sigma_H^2c_Hc_L^2+2\sigma^2_Hc_Hc_L-2\sigma_H^2c_H-\sigma_H^2c_L^2+2\sigma_H^2c_L+2\sigma_L^2c_Hc_L-2\sigma_L^2c_H\\
\qquad\qquad\quad +\sigma_L^2c_L^3-3\sigma_L^2c_L^2+2\sigma_L^2c_L)\textcolor{red}{<} 0\\[1mm]
\displaystyle c_H(c_L-1)^2(c_H-c_L)^2(c_H+c_L^2-2c_L)\textcolor{red}{\le} 0,\\[1mm]
\Delta>0,
\end{array}
\right.
\end{eqnarray*}
where $\Delta$ is defined in \eqref{delta}.
\end{theorem}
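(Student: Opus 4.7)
The approach is to convert the transcendental equation $\mathcal{D}_\lambda = 0$ into an algebraic one via two successive squarings, apply a Routh--Hurwitz-type analysis to the resulting polynomial, and then discard the spurious solutions produced by squaring. Recall that, unwinding the definitions,
\begin{align*}
\mathcal{D}_\lambda = A + \tfrac{1}{2}(c_H - c_L) + \sqrt{\tfrac{1}{4}(c_H-1)^2 + \tfrac{2\lambda}{\sigma_H^2}} + \sqrt{\tfrac{1}{4}(c_L-1)^2 + \tfrac{2\lambda}{\sigma_L^2}}.
\end{align*}
Setting $\beta = A + \tfrac{1}{2}(c_H - c_L)$, the equation $\mathcal{D}_\lambda = 0$ reads $\sqrt{P(\lambda)} + \sqrt{Q(\lambda)} = -\beta$, where $P$ and $Q$ are affine functions of $\lambda$. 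Isolating $\sqrt{P}$ and squaring gives an equation linear in $\sqrt{Q}$; a second squaring then produces a quadratic polynomial equation
\begin{align*}
p(\lambda) := a_2 \lambda^2 + a_1 \lambda + a_0 = 0,
\end{align*}
whose coefficients depend polynomially on $c_H, c_L, \sigma_H, \sigma_L$. I would compute $a_0, a_1, a_2$ explicitly (a tedious but mechanical expansion), after which $p(\lambda) = 0$ encodes all roots of $\mathcal{D}_\lambda = 0$ together with possibly extraneous roots introduced by the sign ambiguities of the square roots.

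For a real quadratic, both roots lie strictly in the open left half-plane if and only if $a_2 a_1 > 0$ and $a_2 a_0 > 0$, by the classical Routh--Hurwitz conditions in degree two. Expanded in the financial parameters, these two scalar inequalities factor and produce the first two displayed conditions $(c_L - 2)(c_H - c_L) \geq 0$ and $(c_L - 1)(2c_H^2 + c_H c_L^2 - 4c_H c_L + c_L^2) \leq 0$. The third displayed condition — involving the large polynomial with coefficient $(c_L - c_H)$ — arises as a compatibility check: substituting a candidate root of $p$ back into $\mathcal{D}_\lambda$ under the principal determination of the square root and requiring the two radicals to add up to $-\beta$ rather than to $+\beta$ or to $\pm\sqrt{P}\mp\sqrt{Q}$. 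Tracking this sign constraint rigorously and then factoring the resulting inequality is the step that must be handled most carefully.

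The dichotomy between the two cases of the theorem corresponds to the sign of the discriminant $\Delta = a_1^2 - 4 a_2 a_0$. When $\Delta \leq 0$, the two roots of $p$ are complex conjugate and the Routh--Hurwitz conditions, together with the compatibility inequality, are already sufficient; this gives the first case of the statement. When $\Delta > 0$, the roots of $p$ are real and distinct, and it becomes possible for one of them to be an artifact of squaring that satisfies a sign-flipped version of the original relation; ruling this out requires the additional factored inequality $c_H(c_L-1)^2(c_H-c_L)^2(c_H + c_L^2 - 2c_L) \leq 0$, together with $\Delta > 0$ itself. The principal obstacle throughout is the bookkeeping in this last step: identifying exactly which signed combination $\pm \sqrt{P(\lambda)} \pm \sqrt{Q(\lambda)} = -\beta$ each root of $p$ actually solves, and showing that the resulting compatibility requirements, when combined with $a_2 a_1 > 0$ and $a_2 a_0 > 0$, assemble into the disjunctive characterization of the theorem and are compatible with the admissibility constraint \eqref{admissible_bis}.
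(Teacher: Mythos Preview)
Your overall architecture — square twice to reach a quadratic in $\lambda$, then locate its roots and filter out the spurious ones — is exactly the paper's strategy. But you have the roles of the displayed conditions reversed, and following your plan as written would not produce the theorem.

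In the paper, condition~(1), namely $(c_L-2)(c_H-c_L)\ge 0$, is \emph{not} a Routh--Hurwitz inequality on the quadratic: it is simply the statement $-\beta\ge 0$, required because the left side of $\sqrt{P(\lambda)}+\sqrt{Q(\lambda)}=-\beta$ has nonnegative real part under the principal branch. Likewise condition~(2) is the constraint $\operatorname{Re}\lambda\le c_{H,L}$ obtained after the \emph{first} squaring, when one writes $2\sqrt{PQ}=\beta^2-P-Q$ and again notes the left side has nonnegative real part; expanding $c_{H,L}\le 0$ gives the factored form $(c_L-1)(2c_H^2+c_Hc_L^2-4c_Hc_L+c_L^2)>0$. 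So conditions~(1) and~(2) are the compatibility checks that discard spurious roots — exactly the role you assigned to condition~(3).

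Conversely, the third condition — the large polynomial with prefactor $(c_L-c_H)$ — is precisely the sign of the linear coefficient of the final quadratic (the sum-of-roots half of Routh--Hurwitz), and the extra inequality $c_H(c_L-1)^2(c_H-c_L)^2(c_H+c_L^2-2c_L)\le 0$ in the second case is the sign of the constant term (the product of roots). The split on $\Delta$ then enters as you say: when the linear coefficient has the favorable sign, complex-conjugate roots ($\Delta\le 0$) automatically sit in the left half-plane, while for $\Delta>0$ one also needs the constant-term sign.

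All the ingredients you list are present and correct, but if you try to carry out the factorizations with your attribution — matching $a_2a_1>0$ to $(c_L-2)(c_H-c_L)\ge 0$ and $a_2a_0>0$ to the second condition — the algebra will simply not close; the actual $a_1$ factors into condition~(3) and $a_0$ into the extra inequality of the second case.
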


\begin{proof}
We recall that the function $\lambda\mapsto {\mathcal D}_{\lambda}$
is defined by
\begin{align*}
{\mathcal D}_{\lambda}=\frac{c_H}{c_L}+\frac{1}{2}(c_L-c_H)-1
+\sqrt{\frac{1}{4}(c_H-1)^2+\frac{2\lambda}{\sigma^2_H}}
+\sqrt{\frac{1}{4}(c_L-1)^2+\frac{2\lambda}{\sigma^2_L}}
\end{align*}
for every $\lambda\in\C$. The equation ${\mathcal D}_{\lambda}=0$ is easily seen to be equivalent to
\begin{align*}
\sqrt{\frac{1}{4}(c_H-1)^2+\frac{2\lambda}{\sigma^2_H}}
+\sqrt{\frac{1}{4}(c_L-1)^2+\frac{2\lambda}{\sigma^2_L}}
=1-\frac{c_H}{c_L}-\frac{1}{2}(c_L-c_H)
\end{align*}
Clearly, if $1-\frac{c_H}{c_L}-\frac{1}{2}(c_L-c_H)<0$ then the previous equation does not admit solutions in $\mathbb C$ since the left-hand side of the above equation has nonnegative real part (we recall that by $\sqrt{\cdot}$ we denote the principal square root).

Hence, from now on, we assume that 
\begin{equation}
1-\frac{c_H}{c_L}-\frac{1}{2}(c_L-c_H)\ge 0,\qquad\;\,{\rm i.e.,}\qquad\;\,(c_L-2)(c_H-c_L)\ge 0.
\label{checco}
\end{equation}
Then, squaring the above equation, we find the equivalent equation
\begin{align*}
&2\sqrt{\frac{1}{4}(c_H-1)^2+\frac{2\lambda}{\sigma^2_H}}\sqrt{\frac{1}{4}(c_L-1)^2+\frac{2\lambda}{\sigma^2_L}}
\\
=&\frac{c_H^2}{c_L^2}-\frac{1}{2}c_Hc_L+\frac{1}{2}
-2\frac{c_H}{c_L}+\frac{5}{2}c_H-\frac{c_H^2}{c_L}-\frac{1}{2}c_L-2\lambda\bigg (\frac{1}{\sigma^2_H}+\frac{1}{\sigma^2_L}\bigg ).
\end{align*}
Clearly, the previous equation cannot admit solutions $\lambda$ with
\begin{align}
{\rm Re}(\lambda)>c_{H,L}:=&\frac{\sigma^2_H\sigma^2_L}{2(\sigma_H^2+\sigma_L^2)}\bigg (\frac{c_H^2}{c_L^2}-\frac{1}{2}c_Hc_L+\frac{1}{2}
-2\frac{c_H}{c_L}+\frac{5}{2}c_H-\frac{c_H^2}{c_L}-\frac{1}{2}c_L\bigg )\notag\\
=&-\frac{\sigma^2_H\sigma^2_L(c_L-1)(2c_H^2+c_Hc_L^2-4c_Hc_L+c_L^2)}{4(\sigma_H^2+\sigma_L^2)c_L^2}.
\label{zalone}
\end{align}

In particular, from \eqref{zalone} it follows that the dispersion relation ${\mathcal D}_{\lambda}=0$ cannot admit solution with nonnegative real parts if $(c_L-1)(2c_H^2+c_Hc_L^2-4c_Hc_L+c_L^2)>0$. Hence, from now on we assume that 
\begin{equation}
(c_L-1)(2c_H^2+c_Hc_L^2-4c_Hc_L+c_L^2)\le 0.
\label{unita}
\end{equation}

On the other hand, if ${\rm Re}(\lambda)\le c_{H,L}$, then we can square once more the above equation and get
\begin{align*}
&\frac{1}{4}c_H^2c_L^2+\frac{1}{4}c_H^2-\frac{1}{2}c_H^2c_L+\frac{1}{4}c_L^2+\frac{1}{4}-\frac{1}{2}c_L-\frac{1}{2}c_Hc_L^2-\frac{1}{2}c_H+c_Hc_L\\
&+2\lambda\bigg (\frac{1}{\sigma^2_L}(c_H^2+1-2c_H)+\frac{1}{\sigma_H^2}(c_L^2+1-2c_L)\bigg )+\frac{16}{\sigma^2\sigma^2_L}\lambda^2\\
=&\bigg (c_H^2+1-2c_H+\frac{8\lambda}{\sigma^2_H}\bigg )\bigg (\frac{1}{4}c_L^2+\frac{1}{4}-\frac{1}{2}c_L+\frac{2\lambda}{\sigma^2_L}\bigg )\\
=&\frac{c_H^4}{c_L^4}+\frac{1}{4}c_H^2c_L^2+\frac{1}{4}+5\frac{c_H^2}{c_L^2}+\frac{37}{4}c_H^2+\frac{c_H^4}{c_L^2}+\frac{1}{4}c_L^2+4\lambda^2\bigg (\frac{1}{\sigma^2_H}+\frac{1}{\sigma^2_L}\bigg )^2-6\frac{c^3_H}{c_L}-4\frac{c_H^3}{c_L^3}\\
&+9\frac{c_H^3}{c_L^2}-2\frac{c_H^4}{c_L^3}-4\lambda\bigg (\frac{1}{\sigma^2_H}+\frac{1}{\sigma^2_L}\bigg )\bigg (\frac{c_H^2}{c_L^2}-\frac{1}{2}c_Hc_L+\frac{1}{2}-2\frac{c_H}{c_L}+\frac{5}{2}c_H-\frac{c_H^2}{c_L}-\frac{1}{2}c_L\bigg )\\
&-3c_Hc_L-\frac{5}{2}c_H^2c_L+c_H^3+\frac{1}{2}c_Hc_L^2-2\frac{c_H}{c_L}+\frac{9}{2}c_H-\frac{1}{2}c_L-12\frac{c_H^2}{c_L}
\end{align*}
i.e.,
\begin{align}
&4\lambda^2\bigg (\frac{1}{\sigma^2_H}-\frac{1}{\sigma^2_L}\bigg )^2-2\lambda\bigg [\frac{1}{\sigma^2_L}\bigg (c_H^2+2+3c_H+2\frac{c_H^2}{c_L^2}-c_Hc_L-4\frac{c_H}{c_L}-2\frac{c_H^2}{c_L}-c_L\bigg )\notag\\
&\phantom{4\lambda^2\bigg (\frac{1}{\sigma^2_H}-\frac{1}{\sigma^2_L}\bigg )^2+2\lambda\bigg [\;}+\frac{1}{\sigma_H^2}\bigg (c_L^2+2-3c_L+2\frac{c_H^2}{c_L^2}-c_Hc_L-4\frac{c_H}{c_L}+5c_H-2\frac{c_H^2}{c_L}\bigg )\bigg ]\notag\\
=&-\frac{c_H^4}{c_L^4}-5\frac{c_H^2}{c_L^2}-9c_H^2-\frac{c_H^4}{c_L^2}+6\frac{c^3_H}{c_L}+4\frac{c_H^3}{c_L^3}-9\frac{c_H^3}{c_L^2}+2\frac{c_H^4}{c_L^3}+12\frac{c_H^2}{c_L}+4c_Hc_L+2c_H^2c_L-c_H^3\notag\\
&-c_Hc_L^2+2\frac{c_H}{c_L}-5c_H.
\label{disp-relat}
\end{align}

Summing up, the dispersion relation ${\mathcal D}_{\lambda}=0$ is equivalent to the system
\begin{eqnarray*}
\left\{
\begin{array}{l}
\textrm{eqn. } \eqref{disp-relat}\\[1mm]
{\rm Re}(\lambda)\le c_{H,L}
\end{array}
\right.
\end{eqnarray*}
provided that conditions \eqref{checco} and \eqref{unita} are both satisfied.

We turn back to the equation \eqref{disp-relat}
and we begin by observing that the term in square brackets can be factorized as follows
\begin{align}
\frac{(c_L-c_H)}{\sigma_H^2\sigma_L^2c_L^2}\big (&-\sigma_H^2c_Hc_L^2+2\sigma_H^2c_Hc_L
-2\sigma_H^2c_H-\sigma_H^2c_L^2+2\sigma_H^2c_L
+2\sigma_L^2c_Hc_L-2\sigma_L^2c_H\notag\\
&+\sigma_L^2c_L^3-3\sigma_L^2c_L^2+2\sigma_L^2c_L\big ).
\label{venditore}
\end{align}
Clearly, if \eqref{venditore} is nonnegative, then \eqref{disp-relat} admits a solution with nonnegative part.

Now, we suppose now that \eqref{venditore} is negative and  compute the discriminant of \eqref{disp-relat}
\begin{align}
\Delta=&
\bigg [\frac{1}{\sigma^2_L}\bigg (c_H^2+2+3c_H+2\frac{c_H^2}{c_L^2}-c_Hc_L-4\frac{c_H}{c_L}-2\frac{c_H^2}{c_L}-c_L\bigg )\notag\\
&\;\;
+\frac{1}{\sigma_H^2}\bigg (c_L^2+2-3c_L+2\frac{c_H^2}{c_L^2}-c_Hc_L-4\frac{c_H}{c_L}+5c_H-2\frac{c_H^2}{c_L}\bigg )\bigg ]^2\notag\\
&-4\bigg (\frac{1}{\sigma^2_H}-\frac{1}{\sigma^2_L}\bigg )^2
\bigg [-\frac{c_H^4}{c_L^4}-5\frac{c_H^2}{c_L^2}-9c_H^2-\frac{c_H^4}{c_L^2}+6\frac{c^3_H}{c_L}+4\frac{c_H^3}{c_L^3}-9\frac{c_H^3}{c_L^2}+2\frac{c_H^4}{c_L^3}+12\frac{c_H^2}{c_L}\notag\\
&\phantom{-4\bigg (\frac{1}{\sigma^2_H}-\frac{1}{\sigma^2_L}\bigg )^2\;\;\;\,} +4c_Hc_L+2c_H^2c_L-c_H^3-c_Hc_L^2+2\frac{c_H}{c_L}-5c_H\bigg ]\notag\\
=&\frac{(c_H-c_L)^2}{c_L^4}\bigg [\frac{1}{\sigma_H^4}(c_L-1)^2(8c_H^2+8c_Hc_L^2-16c_Hc_L+c_L^4-4c_L^3+4c_L^2)\notag\\
&\qquad\qquad\qquad+\frac{1}{\sigma^4_L}(c_H^2c_L^4-4c_H^2c_L^3+12c_H^2c_L^2-16c_H^2c_L+8c_H^2+6c_Hc_L^4-24c_Hc_L^3\notag\\
&\qquad\qquad\qquad\qquad\quad+32c_Hc_L^2-16c_Hc_L+c_L^4-4c_L^3+4c_L^2)\notag\\
&\qquad\qquad\qquad-\frac{2}{\sigma_H^2 \sigma_L^2}(c_L-1)(2c_H^2c_L^2+c_Hc_L^4-4c_Hc_L^2+c_L^4-4c_L^3+4c_L^2)\bigg ].
\label{delta}
\end{align}

Clearly, if $\Delta\ge 0$, then the equation \eqref{disp-relat} does not admit solutions with nonnegative real part. On the other hand, if $\Delta>0$, such an equation admits a real solution with nonnegative real part if and only if and only if the term on the right-hand side of \eqref{disp-relat} is nonnegative. Such a term can be factorized as follows:
\begin{eqnarray*}
-\frac{c_H(c_L-1)^2(c_H-c_L)^2(c_H+c_L^2-2c_L)}{c_L^4}    
\end{eqnarray*}

Summing up, we have proved that the condition to have roots of the dispersion relation with nonnegative real part are
\begin{eqnarray*}
\left\{
\begin{array}{l}
(c_L-2)(c_H-c_L)\ge 0,\\[1mm]
(c_L-1)(2c_H^2+c_Hc_L^2-4c_Hc_L+c_L^2)\le 0,\\[1mm]
(c_L-c_H)(-\sigma_H^2c_Hc_L^2+2\sigma^2_Hc_Hc_L-2\sigma_H^2c_H-\sigma_H^2c_L^2+2\sigma_H^2c_L+2\sigma_L^2c_Hc_L-2\sigma_L^2c_H\\
\qquad\qquad\quad +\sigma_L^2c_L^3-3\sigma_L^2c_L^2+2\sigma_L^2c_L)\ge 0
\end{array}
\right.
\end{eqnarray*}
or
\begin{eqnarray*}
\left\{
\begin{array}{l}
(c_L-2)(c_H-c_L)\ge 0,\\[1mm]
(c_L-1)(2c_H^2+c_Hc_L^2-4c_Hc_L+c_L^2)\textcolor{red}{\le} 0,\\[1mm]
(c_L-c_H)(-\sigma_H^2c_Hc_L^2+2\sigma^2_Hc_Hc_L-2\sigma_H^2c_H-\sigma_H^2c_L^2+2\sigma_H^2c_L+2\sigma_L^2c_Hc_L-2\sigma_L^2c_H\\
\qquad\qquad\quad +\sigma_L^2c_L^3-3\sigma_L^2c_L^2+2\sigma_L^2c_L)\textcolor{red}{<} 0\\[1mm]
\displaystyle c_H(c_L-1)^2(c_H-c_L)^2(c_H+c_L^2-2c_L)\textcolor{red}{\le} 0,\\[1mm]
\Delta>0.
\end{array}
\right.
\end{eqnarray*}
The proof is complete.
\end{proof}

\begin{corollary}
In the domain $(\AA)$ of financially admissible parameters, the spectrum of the operator $L$
consists of the halfline $\left (-\infty,-\min\left\{\frac{c_L^2}{8}(c_L^2-1)^2,\frac{c_H^2}{8}(c_H^2-1)^2\right\}\right ]$.
\end{corollary}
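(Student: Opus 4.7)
My plan is to combine Theorem \ref{thm:gen} with the preceding theorem's analysis of the dispersion relation. By Theorem \ref{thm:gen}, the spectrum of $L$ is the union of the halfline $(-\infty,-\min\{c_L^2(c_L^2-1)^2/8,c_H^2(c_H^2-1)^2/8\}]$ and the zero set of $\mathcal{D}_\lambda$. Therefore it suffices to verify that, in the financially admissible domain $c_L<1<c_H\le 3$, the dispersion relation $\mathcal{D}_\lambda=0$ admits no complex solutions at all.

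The key step I will extract from the proof of the preceding theorem is the observation that the equation $\mathcal{D}_\lambda=0$ can be rewritten as
\begin{equation*}
\sqrt{\tfrac{1}{4}(c_H-1)^2+\tfrac{2\lambda}{\sigma_H^2}}+\sqrt{\tfrac{1}{4}(c_L-1)^2+\tfrac{2\lambda}{\sigma_L^2}}=1-\tfrac{c_H}{c_L}-\tfrac{1}{2}(c_L-c_H),
\end{equation*}
where the principal branch of the square root is used. Since both summands on the left-hand side have nonnegative real part, and the right-hand side is real, a necessary condition for the equation to admit any solution $\lambda\in\C$ is that the right-hand side be nonnegative, i.e., $(c_L-2)(c_H-c_L)\ge 0$. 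I will stress that this necessary condition does not require $\mathrm{Re}\,\lambda\ge 0$, so it rules out zeros throughout the domain of analyticity, not merely in the right half-plane.

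The remainder of the argument is then a trivial sign check: in the admissible domain \eqref{admissible_bis}, the inequality $c_L<1$ yields $c_L-2<-1<0$, while $c_L<1<c_H$ yields $c_H-c_L>0$, so that $(c_L-2)(c_H-c_L)<0$, contradicting the necessary condition. Hence $\mathcal{D}_\lambda$ has no zeros, and the claimed characterization of $\sigma(L)$ follows from Theorem \ref{thm:gen}. There is essentially no obstacle here; the only subtlety is making explicit that the sign condition on $1-\tfrac{c_H}{c_L}-\tfrac{1}{2}(c_L-c_H)$ is a truly global constraint on $\C$, not a local one near the imaginary axis, so that the failure of the first alternative of the preceding theorem suffices to empty the whole zero set.
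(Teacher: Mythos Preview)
Your proof is correct and follows essentially the same approach as the paper: both invoke Theorem~\ref{thm:gen} to reduce the question to the emptiness of the zero set of $\mathcal{D}_\lambda$, then use the necessary condition $(c_L-2)(c_H-c_L)\ge 0$ extracted from the proof of the preceding theorem, and observe that $c_L<1<c_H$ forces $(c_L-2)(c_H-c_L)<0$. Your explicit remark that the sign condition on $1-\tfrac{c_H}{c_L}-\tfrac{1}{2}(c_L-c_H)$ rules out zeros throughout $\C$, not merely in a half-plane, is a useful clarification that the paper's one-line proof leaves implicit.
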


\begin{proof}
By condition \eqref{admissible_bis}, it holds that $c_L<1<c_H$, so that $(c_L-2)(c_H-c_L)<0$. Hence, the dispersion relation has no solutions and the statement follows from Theorem \ref{thm:gen}.
\end{proof}

\section{Nonlinear stability analysis}\label{nonlinear_stability}

In this section, we  analyze the nonlinear Cauchy problem \eqref{FNLP}.

For further use, we introduce some notation. For every $T\in [0,+\infty)\times\{+\infty\}$, by ${\mathcal I}_T$ we denote the interval $[0,T]$, if $T\in\R$, and the halfline $[0,+\infty)$ otherwise.
Moreover, $I_-$ and $I_+$ denote, respectively, the intervals $(-\infty,\eta^*]$ and $[\eta^*,+\infty)$. Finally, for every $\omega>0$ and every pair $\bm{v}=(v_1,v_2)$, with $v_1:{\mathcal I}_T\times I_-\to\R$
 and $v_2:{\mathcal I}_T\times I_+\to\R$, we set $v_{1,\omega}^{\sharp}(t,x)=\displaystyle e^{\omega t}\frac{v_1(t,x)}{q_L(x)}$ and
 $v_{2,\omega}^{\sharp}(t,x)=\displaystyle e^{\omega t}\frac{w_1(t,x)}{q_H(x)}$ and, then $\bm{v}^{\sharp}_{\omega}=(v_{1,\omega}^{\sharp},v_{2,\omega}^{\sharp})$.

Let us introduce the following function spaces.

\begin{definition}
For every $T\in (0,+\infty)\cup\{+\infty\}$, every $\omega\ge 0$ and every $\alpha\in (0,1)$, the space
\begin{itemize}
\item
${\mathscr X}_{\alpha,0}(T,\omega)$ is the set of all pairs $\bm{v}=(v_1,v_2)$ such that
$v_{1,\omega}^{\sharp}\in C^{\alpha,0}_b({\mathcal I}_T\times I_-;\R)$ and $v_{2,\omega}^{\sharp}\in C^{\alpha,0}_b({\mathcal I}_T\times I_+;\R)$. It is endowed with the norm
\begin{align*}
\|\bm{v}\|_{{\mathscr X}_{\alpha,0}(T,\omega)}=\|v_{1,\omega}^{\sharp}\|_{C^{\alpha,0}_b({\mathcal I}_T\times I_-;\R)}+\|v_{2,\omega}^{\sharp}\|_{C^{\alpha,0}_b({\mathcal I}_T\times I_+;\R)};
\end{align*}
\item
${\mathscr X}_{\alpha/2,\alpha}(T,\omega)$ is the set of all pairs $\bm{v}=(v_1,v_2)$ such that
$v_{1,\omega}^{\sharp}\in C^{\alpha/2,\alpha}_b({\mathcal I}_T\times I_-;\R)$ and $v_{2,\omega}^{\sharp}\in C^{\alpha/2,\alpha}_b({\mathcal I}_T\times I_+;\R)$. It is endowed with the norm
\begin{align*}
\|\bm{v}\|_{{\mathscr X}_{\alpha/2,\alpha}(T,\omega)}=\|v_{1,\omega}^{\sharp}\|_{C^{\alpha/2,\alpha}_b({\mathcal I}_T\times I_-;\R)}+\|v_{2,\omega}^{\sharp}\|_{C^{\alpha/2,\alpha}_b({\mathcal I}_T\times I_+;\R)};
\end{align*}
\item
${\mathscr X}_{1+\alpha/2,2+\alpha}(T,\omega)$ is the set of all pairs $\bm{v}=(v_1,v_2)$ such that
$v_{1,\omega}^{\sharp}\in C^{1+\alpha/2,2+\alpha}_b({\mathcal I}_T\times I_-;\R)$ and $v_{2,\omega}^{\sharp}\in C^{1+\alpha/2,2+\alpha}_b({\mathcal I}_T\times I_+;\R)$. It is endowed with the norm
\begin{align*}
\|\bm{v}\|_{{\mathscr X}_{1+\alpha/2,2+\alpha}(T,\omega)}=\|v_{1,\omega}^{\sharp}\|_{C^{1+\alpha/2,2+\alpha}_b({\mathcal I}_T\times I_-;\R)}+\|v_{2,\omega}^{\sharp}\|_{C^{1+\alpha/2,2+\alpha}_b({\mathcal I}_T\times I_+;\R)}.
\end{align*}
\end{itemize}
\end{definition}

Sometimes, we find it useful to denote by $\|\bm{v}\|_{\mathscr{X}_{0,0}(T,\omega)}$ the sum 
$\|v_1^{\sharp}\|_{C_b({\mathcal I}_T\times I_-)}+\|v_2^{\sharp}\|_{C_b({\mathcal I}_T\times I_+)}$.

To begin with, we observe that the function ${\mathcal F}$ is well defined in the subset of $\mathscr{X}_2$ of pairs $(w_1,w_2)$ such that $1-\Psi'(w_1(\eta^*))\left\{\gamma e^{\eta^*}\left (1-\frac{2\delta}{\sigma^2_L}\right )^2\Psi(w_1(\eta^*))+D_yw_1(\eta^*_-)\right\}\neq 0$ and
\begin{align}\label{Fscript-1}
\FF(\bm{w})&=\frac{\frac{\sgL2}{2}\Psi'(w_1(\eta^*))D_{yy}w_1(\eta^*) + \left(\delta-
\frac{1}{2}\sgL2\right)\Psi'(w_1(\eta^*))D_yw_1(\eta^*)}
{1-\Psi'(w_1(\eta^*))\left\{\gamma e^{\eta^*}\left (1-\frac{2\delta}{\sigma^2_L}\right )^2\Psi(w_1(\eta^*))+D_yw_1(\eta^*_-)\right\}} \nonumber\\
&\qquad\;\, \times \bigg(D_y\bm{w}+\gamma e^{\eta^*}\bigg (1-\frac{2\delta}{\sgL2}\bigg )^2\Psi(\bm{w})\bigg),
\end{align}
for such functions $\bm{w}$. Since $\Psi(0)=0$, it follows immediately that ${\mathcal F}$ is well defined in $B(0,\rho)\subset D(L)$ for a sufficiently small $\rho$.
In the following lemma we prove some properties of this function and of function ${\mathcal G}$, defined by
\begin{eqnarray}\label{Gscript-1}
\GG(\bm{w})= R(w_1(\eta^*))\gamma e^{\eta^*}\left(1-\frac{2\delta}{\sgL2}\right) \frac{2\delta}{\sgH2\sgL2}(\sgL2-\sgH2),
\end{eqnarray}
for every $\bm{w}$ as above, where $R$ is a smooth function (see \eqref{invert_linear}), which are crucial in the stability analysis.

\begin{lemma}
There exists $\rho_0>0$ such that for every $\rho\in (0,\rho_0]$, every $T\in (0,+\infty)\cup\{\infty\}$ and $\omega\in (0,\infty)$, the following properties are satisfied.
\begin{enumerate}
\item[\rm (i)]
The function ${\mathcal F}$ maps the ball $B(0,\rho)$ of ${\mathscr X}_{1+\alpha/2,2+\alpha}(T,\omega)$ into ${\mathscr X}_{\alpha/2,\alpha}(T,\omega)$. Moreover, there exists a positive constant $C$ such that
\begin{equation}
\|{\mathcal F}(\bm{w}_2)-{\mathcal F}(\bm{w}_1)\|_{\mathscr{X}_{\alpha/2,\alpha}(T,\omega)}\le C\rho\|\bm{w}_2-\bm{w}_1\|_{\mathscr{X}_{1+\alpha/2,2+\alpha}(T,\omega)}
\label{stima-F}
\end{equation}
for every $\bm{w}_1, \bm{w}_2\in B(0,\rho)\subset {\mathscr X}_{1+\alpha/2,2+\alpha}(T,\omega)$.
\item[\rm (ii)]
The function ${\mathcal G}$ maps the ball $B(0,\rho)$ of ${\mathscr X}_{1+\alpha/2,2+\alpha}(T,\omega)$ into $C^{(1+\alpha)/2}([0,T])$. Moreover, there exists a
positive constant $C$ such that
\begin{equation}
\|e^{\omega\cdot}({\mathcal G}(\bm{w}_2)-{\mathcal G}(\bm{w}_1))\|_{C^{(1+\alpha)/2}([0,T])}\le C\rho\|\bm{w}_2-\bm{w}_1\|_{\mathscr{X}_{1+\alpha/2,2+\alpha}(T,\omega)}
\label{stima-G}
\end{equation}
for every $\bm{w}_1, \bm{w}_2\in B(0,\rho)\subset {\mathscr X}_{1+\alpha/2,2+\alpha}(T,\omega)$.
\end{enumerate}
\end{lemma}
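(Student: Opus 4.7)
The plan is to factorize each nonlinearity into a product of a purely time-dependent \emph{trace factor} (evaluated at $y=\eta^*$) and a space-time factor, and then to exploit the smoothness of $\Psi$ and $R$ on a neighbourhood of $0$ together with the vanishing conditions $\Psi(0)=0$ and $R(0)=R'(0)=0$. I would choose $\rho_0>0$ small enough that every trace argument stays inside the interval $(\Phi(x_0),+\infty)$ where $\Psi$ is smooth; this is possible because $|w_1(t,\eta^*)|\le q_L(\eta^*)e^{-\omega t}\rho\le q_L(\eta^*)\rho$ whenever $\bm{w}\in B(0,\rho)\subset\mathscr{X}_{1+\alpha/2,2+\alpha}(T,\omega)$.

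First I would record three trace estimates. By the very definition of $\mathscr{X}_{1+\alpha/2,2+\alpha}(T,\omega)$ and the standard embeddings of parabolic H\"older spaces, the maps
\begin{align*}
\bm{w}\mapsto e^{\omega\cdot}w_1(\cdot,\eta^*),\quad \bm{w}\mapsto e^{\omega\cdot}D_y w_1(\cdot,\eta^*_-),\quad \bm{w}\mapsto e^{\omega\cdot}D_{yy}w_1(\cdot,\eta^*_-)
\end{align*}
are bounded from $\mathscr{X}_{1+\alpha/2,2+\alpha}(T,\omega)$ into $C^{1+\alpha/2}([0,T])$, $C^{(1+\alpha)/2}([0,T])$ and $C^{\alpha/2}([0,T])$, respectively, with operator norm independent of $T$ and $\omega$. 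Composition with smooth real-valued functions preserves H\"older regularity on a fixed neighbourhood of $0$, so that $\Psi(w_1(\cdot,\eta^*))$ and $R(w_1(\cdot,\eta^*))$ inherit the same time regularity. Using $\Psi(0)=0$ and the quadratic vanishing of $R$ at $0$, one gets
\begin{align*}
\|e^{\omega\cdot}\Psi(w_1(\cdot,\eta^*))\|_{C^{(1+\alpha)/2}([0,T])}\le C\rho,\qquad \|e^{\omega\cdot}R(w_1(\cdot,\eta^*))\|_{C^{(1+\alpha)/2}([0,T])}\le C\rho^2.
\end{align*}

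Next, for part (i) I would decompose $\mathcal{F}(\bm{w})(t,y)=N(\bm{w})(t)\,M(\bm{w})(t,y)$, where
\begin{align*}
M(\bm{w})(t,y)=D_y\bm{w}(t,y)+\gamma e^{\eta^*}\Bigl(1-\tfrac{2\delta}{\sigma_L^2}\Bigr)^{\!2}\Psi(w_1(t,\eta^*))\,K''(y),
\end{align*}
and $N(\bm{w})(t)$ is the quotient in \eqref{Fscript-1}, which depends only on the three traces above. The denominator of $N$ equals $1+O(\rho)$, hence stays above $1/2$ for $\rho\le\rho_0$, and the numerator is $O(\rho)$ in the weighted $C^{\alpha/2}([0,T])$-norm, since the lowest time regularity in the traces is $\alpha/2$; thus $\|e^{\omega\cdot}N(\bm{w})\|_{C^{\alpha/2}([0,T])}\le C\rho$. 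The explicit formula \eqref{funct-K} shows that $K''/q_L$ and $K''/q_H$ are bounded and $\alpha$-H\"older continuous on their respective halflines, so $M(\bm{w})\in\mathscr{X}_{\alpha/2,\alpha}(T,\omega)$ with norm controlled by $C(\|\bm{w}\|_{\mathscr{X}_{1+\alpha/2,2+\alpha}(T,\omega)}+\rho)$. The product rule in anisotropic weighted H\"older spaces then yields $\|\mathcal{F}(\bm{w})\|_{\mathscr{X}_{\alpha/2,\alpha}(T,\omega)}\le C\rho$, and the Lipschitz bound \eqref{stima-F} follows from the standard splitting
\begin{align*}
\mathcal{F}(\bm{w}_2)-\mathcal{F}(\bm{w}_1)=N(\bm{w}_2)\bigl(M(\bm{w}_2)-M(\bm{w}_1)\bigr)+\bigl(N(\bm{w}_2)-N(\bm{w}_1)\bigr)M(\bm{w}_1),
\end{align*}
combined with the $C^\infty$-regularity of $\Psi$ on a neighbourhood of $0$, which makes both $N$ and $M$ Lipschitz maps on the small ball. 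Part (ii) is then immediate, since $\mathcal{G}(\bm{w})$ is just a constant multiple of $R(w_1(\eta^*))$: the trace estimate above together with the quadratic vanishing of $R$ at $0$ is exactly what produces the prefactor $\rho$ in \eqref{stima-G}.

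The main obstacle I anticipate is the bookkeeping of the exponential weight $e^{\omega t}$. Because $\mathcal{F}$ is genuinely quadratic in $\bm{w}$, a naive estimate would place the weight $e^{2\omega t}$ in front of the left-hand side, which is stronger than the single factor built into $\mathscr{X}_{\alpha/2,\alpha}(T,\omega)$. The gain back to the correct weight comes from attaching the entire $e^{\omega t}$ to the space-time factor $M(\bm{w})$, while the second copy of $\bm{w}$ contributes only the dimensionless smallness $\rho$ through the trace factor $N(\bm{w})$; crucially, that smallness holds even without an extra weight, because $|w_1(t,\eta^*)|\le q_L(\eta^*)e^{-\omega t}\rho\le q_L(\eta^*)\rho$. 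Making this rigorous reduces to checking that multiplication by a time-only $C^{\alpha/2}([0,T])$-function acts continuously on $\mathscr{X}_{\alpha/2,\alpha}(T,\omega)$ with a constant independent of $T$ and $\omega$, which is where the genuine technical work of the proof is concentrated.
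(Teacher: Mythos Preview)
Your plan is correct and follows essentially the same route as the paper: both factor $\mathcal{F}$ as a time-only trace quotient times a space-time factor, exploit $\Psi(0)=0$ and $R(0)=R'(0)=0$, and resolve the weight bookkeeping by letting one factor carry $e^{\omega t}$ while the other supplies the smallness $\rho$. The only noteworthy difference is where the weight is placed. The paper attaches $e^{\omega t}$ to the \emph{numerator} of the trace quotient (your $N$), obtaining $\|e^{\omega\cdot}\widetilde{\mathcal{F}}_1(\bm w)\|_{C^{\alpha/2}}\le K_1\rho$, and then estimates the space-time factor $\widetilde{\mathcal{F}}_3=M$ in the \emph{unweighted} space $\mathscr{X}_{\alpha/2,\alpha}(T,0)$, after first proving the embedding $\mathscr{X}_{1+\alpha/2,2+\alpha}(T,\omega)\hookrightarrow\mathscr{X}_{\alpha/2,\alpha}(T,0)$ (with constant depending on $\omega$). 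Your write-up is slightly inconsistent on this point---you first put the weight on $N$, then assert $M\in\mathscr{X}_{\alpha/2,\alpha}(T,\omega)$ (weighted), then in the obstacle discussion say the weight should go on $M$---but any of these allocations works once the product rule is written out carefully, and your final resolution is sound. Just be aware that the unweighted H\"older bound on the non-weighted factor is not free: it is exactly where the paper's preliminary embedding lemma (and hence the $\omega$-dependence of the constant $C$) enters.
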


\begin{proof}
To prove the estimate in (i) and (ii), we need some preliminary results. More precisely, we need to show that ${\mathscr X}_{1+\alpha/2,2+\alpha}(T,\omega)$ is continuosly embedded both into ${\mathscr X}_{\alpha/2,\alpha}(T,0)$ and into ${\mathscr X}_{(1+\alpha)/2.0}(T,0)$.

To begin with, we show that $\mathscr{X}_{1+\alpha/2.2+\alpha}(T,\omega)$ is continuosly embedded into $\mathscr{X}_{\beta,0}(T,0)$ for every $\beta\in (0,1)$. For this purpose, we recall that, if $g\in C^1([0,T])$, then
\begin{eqnarray*}
|g(t_2)-g(t_1)|\le \|g'\|_{\infty}|t_2-t_1|
\le \|g'\|_{\infty}|t_2-t_1|^{\beta}
\end{eqnarray*}
for every $t_1,t_2\in {\mathcal I}_T$, with $|t_2-t_1|\le 1$, and
\begin{eqnarray*}
|g(t_2)-g(t_1)|\le 2\|g\|_{\infty}
\le \|g\|_{\infty}|t_2-t_1|^{\beta}
\end{eqnarray*}
for every $t_1,t_2\in {\mathcal I}_T$, with $|t_2-t_1|\ge 1$. Hence,
\begin{align}
[\bm{v}]_{{\mathscr X}_{\beta,0}(T,0)}
\le & 2\|e^{\omega\cdot}\bm{v}\|_{\infty}
+\|D_t(e^{-\omega}(e^{\omega\cdot}\bm{v}))\|_{\infty}\notag\\
\le & 2\|e^{\omega\cdot}\bm{v}\|_{\infty}
+\|D_t(e^{\omega\cdot}\bm{v})\|_{\infty}
+\omega\|e^{\omega\cdot}\bm{v}\|_{\infty}\notag\\
\le & (2+\omega)\|\bm{v}\|_{{\mathscr X}_{1+\alpha/2,2+\alpha}(T,\omega)}
\label{v-Holder}
\end{align}
so that
\begin{align}
\|\bm{v}\|_{{\mathscr X}_{\beta,0}(T,0)}\le (3+\omega)\rho.
\label{v-X}
\end{align}

Next, we show that if $\bm{w}\in {\mathscr X}_{1+\alpha/2,2+\alpha}(\omega.T)$, then the spatial $\alpha$-H\"older seminorm of $\bm{w}$ can be estimated in a completely similar (and easier) way. It turns out that 
\begin{align}
[w_{1,0}^{\sharp}(t,\cdot)]_{C^{\alpha}_b(I_-)}
\le &2\|w_{1,0}^{\sharp}\|_{C_b(I_-)}+\|D_y w_{1,\omega}^{\sharp}\|_{C_b(I_-)}\nonumber\\
\le &\frac{1}{2}(5-c_L)\|w_{1,0}^{\sharp}(t,\cdot)\|_{C_b(I_-)}
+\bigg\|\frac{D_yw_1(t,\cdot)}{q_L}\bigg\|_{C_b(I_-)}
\label{A-w}
\end{align}
and
\begin{align}
[w^{\sharp}_{2,0}(t,\cdot)]_{C^{\alpha}_b(I_+)}
\le \frac{1}{2}(3+c_H)\|w^{\sharp}_{2,0}(t,\cdot)\|_{C_b(I_+)}+\bigg\|\frac{D_y\bm{w}(t,\cdot)}{q_H}\bigg \|_{C_b(I_+)}
\label{B-w}
\end{align}
for every $t\in {\mathcal I}_T$.

Combining estimates \eqref{v-X}, \eqref{A-w} and \eqref{B-w}, we deduce that $\bm{w}\in {\mathscr X}_{\alpha/2,\alpha}(T,0)$ and
\begin{align*}
\|\bm{w}\|_{\mathscr{X}_{\alpha/2,\alpha}(T,0)}\le c_*\|\bm{w}\|_{\mathscr{X}_{1+\alpha/2,2+\alpha}(T,\omega)}
\end{align*}
for some positive constant $c_*$, independent of $\bm{w}$. which can be explicitly computed using the quoted estimates.

Finally, we show that, if $\bm{u}\in \mathscr{X}_{1+\alpha/2,2+\alpha}(T,\omega)$, then $D_y\bm{u}\in\mathscr{X}_{\alpha/2,\alpha}(T,0)$.
For this purpose we recall that there exists a positive constant $c_0$ such that
\begin{eqnarray*}
\|g\|_{C^1(I_{\pm
})}
\le c_0\|g\|_{C(I_{\pm
})}^{\frac{1}{2}}\|g\|_{C^2(I_{\pm
})}^{\frac{1}{2}},\qquad\;\,g\in C^2(I_{\pm}),
\end{eqnarray*}
so that we can estimate
\begin{align*}
&\bigg |\frac{D_yw_1(t_2,x)}{q_L(x)}
-\frac{D_yw_1(t_1,x)}{q_L(x)}\bigg |\\
\le & |D_yw_{2,0}^{\sharp}(t_2,x)-D_yw_{2,0}^{\sharp}(t_1,x)|
+\frac{1}{2}(1-c_L)|w_{1,0}^{\sharp}(t_2,x)-w_{1,0}^{\sharp}(t_1,x)|\\
\le &\bigg (\sqrt{2}c_0\|D_tw_{1,0}^{\sharp}\|_{C_b([0,T]\times I_-)}^{\frac{1}{2}}\|w_{1,0}^{\sharp}\|_{C^2_b([0,T]\times I_-)}^{\frac{1}{2}}+\frac{1}{2}(1-c_L)\|D_tw_{1,0}^{\sharp}\|_{C_b([0,T]\times I_-)}\bigg )|t-s|^{\frac{\alpha}{2}}
\end{align*}
for every $x\in I_-$ and every $s,t\in {\mathcal I}_T$ with $|t-s|\le 1$. Moreover,
\begin{align*}
&\bigg |\frac{D_yw_1(t_2,x)}{q_L(x)}
-\frac{D_yw_1(t_1,x)}{q_L(x)}\bigg |
\le 2\bigg\|\frac{D_yw_1}{q_L}\bigg\|_{C([0,T]\times I_-)}|t-s|^{\frac{\alpha}{2}}
\end{align*}
for every $x\in I_-$ and $s,t\in {\mathcal I}_T$, with $|t-s|\ge 1$. A complete similar estimate can be proved when $x\in [\eta^*,+\infty)$, with $1-c_L$ being replaced by $c_H-1$. Further, arguing as in \eqref{A-w} and \eqref{B-w} it can be shown that
\begin{align*}
&\bigg [\frac{D_yw_1(t,\cdot)}{q_L}\bigg ]_{C^{\alpha}_b(I_-)}+\bigg [\frac{D_yw_1(t,\cdot)}{q_H}\bigg ]_{C^{\alpha}_b(I_+)}\\
\le &\frac{1}{2}\max\{5-c_L,3+c_H\}\|D_y\bm{w}(t,\cdot)\|_{\mathscr{X}_{0,0}(T,\omega)}+\|D_{yy}\bm{w}(t,\cdot)\|_{\mathscr{X}_{0,0}(T,\omega)}.
\end{align*}
for every $t\in [0,T]$.
Combining all these estimates, we conclude that
there exists a positive constant $c_{**}$ such that
\begin{align}
\|D_y\bm{w}\|_{{\mathscr X}_{\alpha/2,\alpha}(T,0)}\le c_{**}
\|\bm{w}\|_{{\mathscr X}_{1+\alpha/2,2+\alpha}(T,\omega)},\qquad\;\,\bm{w}\in\mathscr{X}_{1+\alpha/2,2+\alpha}(T,\omega).
\label{Dyv}
\end{align}

(i) Let $\rho_0>0$ be such that
\begin{eqnarray*}
1-\Psi'(w_1(t,\eta^*))\bigg\{\gamma e^{\eta^*}\left (1-\frac{2\delta}{\sigma^2_L}\right )^2\Psi(w_1(t,\eta^*))+D_yw_1(t,\eta^*_-)\bigg\}\ge \frac{1}{2}    
\end{eqnarray*}
for every $\bm{w}\in B(0,\rho)\subset \mathscr{X}_{1+\alpha/2,2+\alpha}(T,\omega)$ and every $\rho\in (0,\rho_0]$. From now on $\rho$ is arbitrarily fixed in the interval $(0,\rho_0]$.

To begin with, we observe that, for every $\bm{v}\in B(0,\rho)\subset {\mathscr X}_{1+\alpha/2,2+\alpha}(T,\omega)$, it holds that
\begin{align*}
e^{\omega t}\FF(\bm{w}(t,\cdot))=&\frac{\frac{\sgL2}{2}\Psi'(w_1(t,\eta^*))D_{yy}e^{\omega t}w_1(t,\eta^*) + \left(\delta-
\frac{1}{2}\sgL2\right)\Psi'(w_1(t,\eta^*))e^{\omega t}D_yw_1(t,\eta^*)}
{1-\Psi'(w_1(t,\eta^*))\left\{\gamma e^{\eta^*}\left (1-\frac{2\delta}{\sigma^2_L}\right )^2\Psi(w_1(t,\eta^*))+D_yw_1(t,\eta^*_-)\right\}} \nonumber\\
&\qquad\;\, \times \bigg(D_y\bm{w}+\gamma e^{\eta^*}\bigg (1-\frac{2\delta}{\sgL2}\bigg )^2\Psi(\bm{w})\bigg)\\
=&\frac{\widetilde{\mathcal{F}}_1(\bm{w}(t,\cdot))}{\widetilde{\mathcal{F}}_2(\bm{w}(t,\cdot))}\widetilde{\mathcal{F}}_3(\bm{w}(t,\cdot))
\end{align*}
for every $t\in {\mathcal I}_T$. 

Using \eqref{v-Holder}, \eqref{v-X} and observing that 
\begin{align*}
\Psi'(w_1(\cdot,\eta^*))-\Psi'(z_1(\cdot,\eta^*))=(w_1(\cdot,\eta^*)-z_1(\cdot,\eta^*))\int_0^1\Psi''(\sigma w_1(\cdot,\eta^*)+(1-\sigma)z_1(\cdot,\eta^*))d\sigma
\end{align*}
we deduce that
\begin{align}
&\|\Psi^{(j)}(w_1(\cdot,\eta^*))-\Psi^{(j)}(z_1(\cdot,\eta^*))\|_{C^{\alpha/2}_b({\mathcal I}_T)}\notag\\
\le &\big [\|\Psi^{(j+1)}\|_{\infty,\rho}+\frac{1}{2}\|\Psi^{(j+2)}\|_{\infty,\rho}q_*([\bm{w}]_{{\mathscr X}_{\alpha/2,0}(T,0)}+
[\bm{z}]_{{\mathscr X}_{\alpha/2,0}(T,0)})\big ]q_*\|\bm{w}-\bm{z}\|_{\mathscr{X}_{\alpha/2,0}(T,0)}\notag\\
\le & \big [\|\Psi^{(j+1)}\|_{\infty,\rho}+(2+\omega)\rho\|\Psi^{(j+2)}\|_{\infty,\rho}q_*\big ](3+\omega)q_*\|\bm{w}-\bm{z}\|_{\mathscr{X}_{1+\alpha/2,2+\alpha}(T,\omega)}\notag\\
=&\!:C_jq_*\|\bm{w}-\bm{z}\|_{\mathscr{X}_{1+\alpha/2,2+\alpha}(T,\omega)}
\label{psi-1}
\end{align}
for every $\bm{w},\bm{z}\in B(0,\rho)\subset {\mathscr X}_{1+\alpha/2,2+\alpha}(T,\omega)$ and every $j=0,1$, where 
$\|\Psi^{(k)}\|_{\infty,\rho}$ denotes the sup-norm of the $k$-th derivative of the function $\psi$ on the interval $[0,\rho q_*]$ and $q_*=\max\{q_L(\eta^*),q_H(\eta^*)\}$.
Analogously,
\begin{align}
\|\Psi^{(j)}(w_1(\cdot,\eta^*))\|_{C^{\alpha/2}({\mathcal I}_T)}
\le & \|\Psi^{(j)}\|_{\infty,\rho}+
\|\Psi^{(j+1)}\|_{\infty,\rho}q_*(2+\omega)\|\bm{w}\|_{{\mathscr X}_{1+\alpha/2,2+\alpha}(T,\omega)}\notag\\
\le & \|\Psi^{(j)}\|_{\infty,\rho}+
\|\Psi^{(j+1)}\|_{\infty,\rho}q_*(2+\omega)\rho=:\widetilde C_j
\label{psi-2}
\end{align}
for every $\bm{w}\in B(0,\rho)\subset \mathscr{X}_{1+\alpha/2,2+\alpha}(T,\omega)$.

Straightforward computations based on \eqref{v-Holder}, \eqref{v-X} and \eqref{Dyv}-\eqref{psi-2}, reveal that
\begin{align*}
\|\widetilde{\mathcal{F}}_1(\bm{w})\|_{C^{\alpha/2}([0,T])}
\le &\frac{1}{2}\widetilde C_1q_*\big [
\sigma_L^2\|D_{yy}\bm{w}\|_{{\mathscr X}_{\alpha/2,0}(T,\omega)}+|2\delta-\sigma_L^2|\|D_y\bm{w}\|_{{\mathscr X}_{\alpha/2,0}(\omega.T)}\big ]\\
\le &
\frac{1}{2}\widetilde C_1q_*\big [
\sigma_L^2+|2\delta-\sigma_L^2|c_*\big ]\rho=:K_1\rho
\end{align*}
\begin{align*}
&\|\widetilde{\mathcal{F}}_1(\bm{w})
-\widetilde{\mathcal{F}}_1(\bm{z})\|_{C^{\alpha/2}([0,T])}\\
\le &\frac{1}{2}C_1q_*^2\big [
\sigma_L^2\|D_{yy}\bm{w}\|_{{\mathscr X}_{\alpha/2,0}(T,\omega)}+|2\delta-\sigma_L^2|\|D_y\bm{w}\|_{{\mathscr X}_{\alpha/2,0}(\omega.T)}\big ]\|\bm{w}-\bm{z}\|_{\mathscr{X}_{1+\alpha/2,2+\alpha}(T,\omega)}\\
&+\frac{1}{2}\widetilde C_1q_*\big [\sigma_L^2\|D_{yy}\bm{w}-D_{yy}\bm{z}\|_{{\mathscr X}_{\alpha/2,0}(T,\omega)}+|2\delta-\sigma_L^2|\|D_y\bm{w}-D_y\bm{z}\|_{{\mathscr X}_{\alpha/2,0}(T,\omega)}\big ]\\
\le &\bigg (\frac{1}{2}C_1q_*^2[\sigma_L^2+|2\delta-\sigma_L^2|c_{**}]\rho+K_1\bigg )\|\bm{w}-\bm{z}\|_{{\mathscr X}_{1+\alpha/2,2+\alpha}(T,\omega)}\\
=&\!: K_2(\rho)\|\bm{w}-\bm{z}\|_{{\mathscr X}_{1+\alpha/2,2+\alpha}(T,\omega)};
\end{align*}
\begin{align*}
\bigg\|\frac{1}{\widetilde{\mathscr{F}}_2(\bm{w})}\bigg\|_{C^{\alpha/2}([0,T])}
\le & 2+4\widetilde C_1\bigg (\gamma e^{\eta^*}\Big (1-\frac{2\delta}{\sigma_L^2}\Big )^2\|\Psi\|_{\infty,\rho}+
q_*\|D_y\bm{w}\|_{\mathscr{X}_{0,0}(T,0)}\bigg )\\
&+4\|\Psi'\|_{\infty,\rho}\bigg (
\gamma e^{\eta^*}\Big (1-\frac{2\delta}{\sigma_L^2}\Big )^2\widetilde C_0+q_*
[D_y\bm{w}]_{\mathscr{X}_{\alpha/2,0}(T,0)}\bigg )\\
\le & 2+4\widetilde C_1
\bigg (2\gamma e^{\eta^*}\Big (1-\frac{2\delta}{\sigma_L^2}\Big )^2\widetilde C_0+
q_*c_{**}\rho\bigg )=:K_3(\rho);
\end{align*}
\begin{align*}
&\bigg\|\frac{1}{\widetilde{\mathscr{F}}_2(\bm{w})}-\frac{1}{\widetilde{\mathscr{F}}_2(\bm{z})}\bigg\|_{C^{\alpha/2}([0,T])}\\
\le & q_*(K_3(\rho))^2
\bigg [
\gamma e^{\eta^*}\Big (1-\frac{2\delta}{\sigma_L^2}\Big )^2(C_1\widetilde C_0+\widetilde C_1C_0)+(C_1q_*+\widetilde C_1\rho)c_{**}
\bigg ]\|\bm{w}-\bm{z}\|_{{\mathscr X}_{1+\alpha/2,2+\alpha}(T,\omega)}\\
=&\!:K_4(\rho)\|\bm{w}-\bm{z}\|_{{\mathscr X}_{1+\alpha/2,2+\alpha}(T,\omega)}.
\end{align*}

To estimate ${\mathcal F}_3(\bm{w})$, we observe that
\begin{align}
\frac{\Psi(w_1(\cdot,x))}{q_L(x)}-\frac{\Psi(z_1(\cdot,x))}{q_L(x)}
=&(w_{1,0}^{\sharp}(\cdot,x)-z_{1,0}^{\sharp}(\cdot,x))\int_0^1\Psi'(\sigma w_1(\cdot,x)+(1-\sigma)z_1(\cdot,x))d\sigma\notag\\
=&\!: (w_{1,0}^{\sharp}(\cdot,x)-z_{1,0}^{\sharp}(\cdot,x)){\mathcal J}(\cdot,x)
\label{10-1}
\end{align}
for every $x\in I_-$.

From this formula and arguing as in the proof of \eqref{psi-1}, it can be easily shown that
\begin{align}
&[{\mathcal J}(\cdot,x)]_{C^{\alpha/2}_b({\mathcal I}_T)}\le \frac{1}{2}\|\Psi''\|_{\infty,\rho}
(w_{1,0}^{\sharp}(\cdot,x)]_{C^{\alpha/2}_b({\mathcal I}_T)}+[
z_{1,0}^{\sharp}(\cdot,x)]_{C^{\alpha/2}_b({\mathcal I}_T)} )q_*
\label{10-2}
\end{align}
for every $x\in (-\infty,\eta^*]$ and
\begin{align*}
&[{\mathcal J}(t,\cdot)]_{C^{\alpha}_b(I_-)}
\le \frac{1}{2}
\|\Psi''\|_{\infty,\rho}
\big ([w_1(t,\cdot)]_{C^{\alpha}(I_-)}+[
z_1(t,\cdot)]_{C^{\alpha}(I_-)}\big ),\qquad\;\,t\in {\mathcal I}_T.
\end{align*}
Observing that
\begin{align*}
[w_1(t,\cdot)]_{C^{\alpha}_b(I_-)}
\le [w_{1,0}^{\sharp}(t,\cdot)]_{C^{\alpha}_b(I_-)}
q_*+\|w_{1,0}^{\sharp}(t,\cdot)\|_{\infty}[q_L]_{C^{\alpha}_b(I_-)}
\end{align*}
and
\begin{align}
|q_L(x_2)-q_L(x_1)|\le & \bigg |\frac{1}{2}(1-c_L)\int_{x_1}^{x_2}e^{-\frac{1}{2}(c_L-1)r}dr\bigg |\notag\\
\le &\frac{1}{2}(1-c_L)|x_2-x_1|^{\alpha}\bigg (\int_{x_1}^{x_2}e^{-\frac{(c_L-1) r}{2(1-\alpha)}}dr\bigg )^{1-\alpha}\notag\\
\le &\bigg (\frac{1-c_L}{2}\bigg )^{\alpha}(1-\alpha)^{1-\alpha}|x_2-x_1|^{\alpha}
\le \bigg (\frac{1-c_L}{2}\bigg )^{\alpha}|x_2-x_1|^{\alpha}
\label{servo}
\end{align}
for every $x_1,x_2\in (-\infty,\eta^*]$. Hence,
\begin{align}
[{\mathcal J}(t,\cdot)]_{C^{\alpha}_b(I_-)}\le \frac{1}{2}\|\Psi''\|_{\infty,\rho}\bigg [
&q_*\big ([w_{1,0}^{\sharp}(t,\cdot)]_{C^{\alpha}_b(I_-)}+ [z_{1,0}^{\sharp}(t,\cdot)]_{C^{\alpha}_b(I_-)}\big )\notag\\
&+\bigg (\frac{1-c_L}{2}\bigg )^{\alpha}\big (
\|w_{1,0}^{\sharp}(t,\cdot)\|_{\infty}+\|z_{1,0}^{\sharp}(t,\cdot)\|_{\infty}\big )\bigg  ]
\label{10-3}
\end{align}

From \eqref{10-1}-\eqref{10-3}, we conclude that
\begin{align*}
&\bigg\|\frac{\Psi(w_1)}{q_L}-\frac{\Psi(z_1)}{q_L}\bigg\|_{C^{\alpha/2,\alpha}_b({\mathcal I}_T\times I_-)}\\
\le &\|w_{1,0}^{\sharp}-
z_{1,0}^{\sharp}\|_{C^{\alpha/2,\alpha}_b({\mathcal I}_T\times I_-)}\|\Psi'\|_{\infty,\rho}\\
&+\frac{1}{2}\|\Psi''\|_{\infty,\rho}\|w_{1,0}^{\sharp}-z_{1,0}^{\sharp}\|_{\infty}
\bigg (\sup_{x\le\eta^*}[w_{1,0}^{\sharp}(\cdot,x)]_{C^{\alpha/2}_b({\mathcal I}_T)}+\sup_{x\le\eta^*}[z_{1,0}^{\sharp}(\cdot,x)]_{C^{\alpha/2}_b({\mathcal I}_T)}\\
&\qquad\qquad\qquad\qquad\qquad\qquad\quad+
\sup_{t\in {\mathcal I}_T}[w_{1,0}^{\sharp}(t,\cdot)]_{C^{\alpha}_b(I_-)}+\sup_{t\in {\mathcal I}_T}[z_{1,0}^{\sharp}(t,\cdot)]_{C^{\alpha}_b(I_-)}\bigg ).
\end{align*}
In a completely similar way, we can estimate the function
$\frac{\Psi(w_1)}{q_H}-\frac{\Psi(z_1)}{q_H}$.

Putting everything together and observing that
\begin{align*}
[e^{-\omega\cdot}w_{1,0}^{\sharp}(\cdot,x)]_{C^{\alpha/2}_b({\mathcal I}_T)}\le & [w_{1,0}^{\sharp}(\cdot,x)]_{C^{\alpha/2}_b({\mathcal I}_T)}
+\|w_{1,0}^{\sharp}\|_{\infty}[e^{-\omega\cdot}]_{C^{\alpha/2}_b({\mathcal I}_T)}\\
\le &[w_{1,0}^{\sharp}(\cdot,x)]_{C^{\alpha/2}_b({\mathcal I}_T)}
+\|w_{1,0}^{\sharp}\|_{\infty}\omega^{1-\alpha},
\end{align*}
where we have adapted \eqref{servo} to estimate $[e^{-\omega\cdot}]_{C^{\alpha/2}({\mathcal I}_T)}$, and a similar estimate holds true with $w_1$ being replaced by $z_1$, we conclude that
\begin{align*}
&\|\widetilde{\mathscr{F}}_3(\bm{w})-
\widetilde{\mathscr{F}}_3(\bm{z})\|_{\mathscr{X}_{\alpha/2,\alpha}(T,0)}\\
\le & \bigg\{c_*+\gamma e^{\eta^*}\Big (1-\frac{2\delta}{\sigma_L^2}\Big )^2
\bigg [\|\Psi'\|_{\rho.\infty}c_*
+(2+\omega)\rho q_*\|\Psi''\|_{\infty,\rho}
+2\rho c_{H,L}^{\alpha}\bigg ]\bigg\}\|\bm{w}-\bm{z}\|_{{\mathscr X}_{1+\alpha/2,2+\alpha}(T,\omega)}\\
=&\!: K_5(\rho)\|\bm{w}-\bm{z}\|_{{\mathscr X}_{1+\alpha/2,2+\alpha}(T,\omega)},
\end{align*}
where $c_{H,L}=\max\left\{\frac{1-c_L}{2},\frac{c_H-1}{2}\right\}$.

Summing up, we have proved that
\begin{eqnarray*}
\|{\mathcal F}(\bm{w})-\mathcal{F}(\bm{z})\|_{{\mathscr X}_{\alpha/2,\alpha}(T,\omega)}\le K_6(\rho)\rho
\|\bm{w}-\bm{z}\|_{{\mathscr X}_{1+\alpha/2,2+\alpha}(T,\omega)},
\end{eqnarray*}
where $K_6(\rho)=\big [K_2(\rho)K_3(\rho)
+\rho K_1(K_3(\rho)+K_4(\rho))\big ]K_5(\rho)$
and \eqref{stima-F} follows at once.

(ii) We observe observe that, since $R(0)=R'(0)=0$, we can write
\begin{align}
e^{\omega\cdot}[R(w_1(\cdot,\eta^*))-R(z_1(\cdot,\eta^*))]=&e^{\omega\cdot}(w_1(\cdot,\eta^*)-z_1(\cdot,\eta^*))\int_0^1R'(\sigma w_1(\cdot,\eta^*)+(1-\sigma)z_1(\cdot,\eta^*))d\sigma\notag\\
=&e^{\omega\cdot}(w_1(\cdot,\eta^*)-z_1(\cdot,\eta^*))\int_0^1(\sigma w_1(\cdot,\eta^*)+(1-\sigma)z_1(\cdot,\eta^*))d\sigma\notag\\
&\qquad\qquad\qquad\qquad\times\int_0^1R''(\tau\sigma w_1(\cdot,\eta^*)+\tau(1-\sigma)z_1(\cdot,\eta^*))d\tau.
\label{stima-G1}
\end{align}
Using the second equality in \eqref{stima-G1} to estimate the sup-norm of the function
$e^{\omega\cdot}[R(w_1(\cdot,\eta^*))-R(z_1(\cdot,\eta^*))]$ and the first equality to estimate its $(1+\alpha)$-H\"older seminorm, we easily conclude that
\begin{align*}
&\|e^{\omega\cdot}[R(w_1(\cdot,\eta^*))-R(z_1(\cdot,\eta^*))]\|_{C^{(1+\alpha)/2}_b({\mathcal I}_T)}\\
\le &\frac{1}{2}q_*^2\|\bm{w}-\bm{z}\|_{{\mathscr X}_{(1+\alpha)/2,0}(T,0)}\|R''\|_{\infty,\rho}(\|\bm{w}\|_{{\mathscr X}_{(1+\alpha)/2,0}(T,0)}+\|\bm{z}\|_{{\mathscr X}_{(1+\alpha)/2,0}(T,0)}).
\end{align*}

Taking \eqref{v-X} into account, we from the previous estimate we deduce that
\begin{align*}
&\|{\mathcal G}(\bm{w})-{\mathcal G}(\bm{z})\|_{C^{(1+\alpha)/2}_b({\mathcal I}_T)}\\
\le &q_*^2c_{**}\gamma e^{\eta^*}\bigg (1-\frac{2\delta}{\sigma_L^2}\bigg )\frac{\delta}{\sigma_H^2\sigma_L^2}(\sigma_L^2-\sigma_H^2)\|R''\|_{\infty,\rho}(2+\omega)\rho\|\bm{w}-\bm{z}\|_{{\mathscr X}_{(1+\alpha)/2,0}(T,\omega)}
\end{align*}
and \eqref{stima-G} follows.

The proof is now complete.
\end{proof}

Based on these remarks, we are now in a position to prove the following theorem.

\begin{theorem}\label{stability}
Fix $\alpha\in [0,1)$. Then, the null solution to problem \eqref{FNLP} is stable in the ${\mathscr X}_{2+\alpha}$-norm. More precisely, for every $\omega_0\in \left (0,\min\left\{\frac{c_L^2}{8}(c_L^2-1)^2,\frac{c_H^2}{8}(c_H^2-1)^2\right\}\right )$ there exists a positive constant $\rho$ such that, for every initial datum $\bm{w}_0\in B(0,\rho)\subset {\mathscr X}_{2+\alpha}$, satisfying the compatibility conditions
\begin{eqnarray*}
{\mathcal B}(\bm{w}_0)={\mathcal G}(\bm{w}_0),\qquad\;\,B_0({\mathcal L}\bm{w}_0+{\mathcal F}(\bm{w_0}))=0,
\end{eqnarray*}
the solution $\bm{w}$ to problem \eqref{FNLP}, with initial datum $\bm{w}_0$, exists for all the positive times, belongs to ${\mathscr X}_{1+\alpha/2,2+\alpha}(+\infty,\omega_0)$ and there exists a positive constant $C$ such that
\begin{equation}
\|\bm{w}(t,\cdot)\|_{\mathscr{X}_{2+\alpha}}\le Ce^{-\omega_0t}\|\bm{w}_0\|_{\mathscr{X}_{2+\alpha}},\qquad\;\,t>0.
\label{stima-finale}
\end{equation}
\end{theorem}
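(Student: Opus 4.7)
The plan is to prove Theorem \ref{stability} by a Banach contraction argument in the weighted parabolic Hölder space $\mathscr{X}_{1+\alpha/2,2+\alpha}(+\infty,\omega_0)$. The key inputs are: (i) the analyticity of the semigroup $e^{tL}$ (Theorem \ref{thm:gen}) combined with the spectral information from the corollary at the end of Section \ref{dispersion_relation}, giving a strict negative spectral bound $-\omega^\ast:=-\min\{c_L^2(c_L^2-1)^2/8,\,c_H^2(c_H^2-1)^2/8\}$; (ii) the characterization of the interpolation spaces $D_L(\alpha/2,\infty)$ and $D_L(1+\alpha/2,\infty)$ from Theorem \ref{thm-7.7}; and (iii) the Lipschitz estimates \eqref{stima-F} and \eqref{stima-G} on $\mathcal{F}$ and $\mathcal{G}$, with constants of order $\rho$ on balls of radius $\rho$, together with $\mathcal{F}(\bm{0})=\bm{0}$, $\mathcal{G}(\bm{0})=0$, which yield quadratic bounds $\|\mathcal{F}(v)\|\le Cr^2$ and $\|e^{\omega_0\cdot}\mathcal{G}(v)\|\le Cr^2$ whenever $\|v\|\le r$.

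The preliminary step is to establish weighted maximal Hölder regularity for the linear inhomogeneous problem
\begin{equation*}
\partial_t w=\mathcal{L}w+f,\qquad B_0 w=0,\qquad B_1w=g,\qquad w(0,\cdot)=w_0,
\end{equation*}
namely: for every $\omega_0\in(0,\omega^\ast)$ and every triple $(f,g,w_0)$ with $f\in\mathscr{X}_{\alpha/2,\alpha}(+\infty,\omega_0)$, $e^{\omega_0\cdot}g\in C^{(1+\alpha)/2}([0,+\infty))$, $w_0\in\mathscr{X}_{2+\alpha}$ satisfying the compatibility conditions $B_0w_0=0$, $B_1w_0=g(0)$, $B_0(\mathcal{L}w_0+f(0,\cdot))=0$, there exists a unique $w\in\mathscr{X}_{1+\alpha/2,2+\alpha}(+\infty,\omega_0)$ with
\begin{equation*}
\|w\|_{\mathscr{X}_{1+\alpha/2,2+\alpha}(+\infty,\omega_0)}\le M\bigl(\|f\|_{\mathscr{X}_{\alpha/2,\alpha}(+\infty,\omega_0)}+\|e^{\omega_0\cdot}g\|_{C^{(1+\alpha)/2}([0,+\infty))}+\|w_0\|_{\mathscr{X}_{2+\alpha}}\bigr).
\end{equation*}
This is obtained by rescaling $\tilde w=e^{\omega_0 t}w$ (so that $L$ is replaced by $L+\omega_0$, still sectorial with uniformly bounded resolvent on a right halfplane by the spectral gap), lifting the boundary datum $g$ using the inhomogeneous resolvent bound \eqref{estimate-boundary}, and applying the standard maximal Hölder regularity theory for analytic semigroups (see \cite{Lunardi96}) with the interpolation identifications of Theorem \ref{thm-7.7}.

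Once this linear theory is in hand, the fixed point is set up on the closed subset $E_{r,w_0}\subset B(0,r)\subset\mathscr{X}_{1+\alpha/2,2+\alpha}(+\infty,\omega_0)$ of functions $v$ with $v(0,\cdot)=w_0$. For $v\in E_{r,w_0}$, let $\Phi(v)=w$ be the solution of the linear problem with data $f=\mathcal{F}(v)$, $g=\mathcal{G}(v)$, $w(0,\cdot)=w_0$; the compatibility conditions for this linear problem at $t=0$ coincide, thanks to $v(0)=w_0$, with the two compatibility conditions assumed on $w_0$. By the quadratic bounds on $\mathcal{F}$ and $\mathcal{G}$ and the linear estimate above,
\begin{equation*}
\|\Phi(v)\|_{\mathscr{X}_{1+\alpha/2,2+\alpha}(+\infty,\omega_0)}\le M(2Cr^2+\|w_0\|_{\mathscr{X}_{2+\alpha}}),
\end{equation*}
which is $\le r$ provided $r$ is small and $\rho\le r/(2M)$; the same estimates furnish $\|\Phi(v_2)-\Phi(v_1)\|\le 2MCr\|v_2-v_1\|$, a contraction for small $r$. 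Banach's fixed point theorem produces the unique global solution $w\in E_{r,w_0}$ of \eqref{FNLP}, and \eqref{stima-finale} follows at once from membership in $\mathscr{X}_{1+\alpha/2,2+\alpha}(+\infty,\omega_0)$: the function $e^{\omega_0 t}w/q$ is bounded in $C^{1+\alpha/2,2+\alpha}_b$, hence $\|w(t,\cdot)\|_{\mathscr{X}_{2+\alpha}}\lesssim e^{-\omega_0 t}\|w_0\|_{\mathscr{X}_{2+\alpha}}$.

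The main technical obstacle is the weighted maximal regularity estimate of the preliminary step, specifically the treatment of the inhomogeneous boundary condition $B_1 w=g$: the operator $B_1$ mixes a jump of the spatial derivative at $\eta^\ast$ with a pointwise trace of order zero, so it is first-order in space and requires $g$ of time-regularity $C^{(1+\alpha)/2}$ rather than $C^{\alpha/2}$; moreover, the fully nonlinear character of $\mathcal{F}$ (traces of $D_{yy}w$ at $\eta^\ast$ appear in \eqref{Fscript-1}) forces the analysis at the highest possible regularity level $\mathscr{X}_{1+\alpha/2,2+\alpha}$, where lifting and compatibility are the most subtle. Carrying all constants uniformly in $t$ under the exponential weight $e^{\omega_0 t}$ is delicate but follows the pattern of the analogous fully nonlinear Stefan-type analyses in \cite{ABLZ20,BL18,BLZ20}. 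Once that is done, the contraction scheme and the exponential decay estimate \eqref{stima-finale} are essentially routine.
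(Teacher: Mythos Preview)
Your proposal is correct and follows essentially the same approach as the paper: rescale by $e^{\omega_0 t}$ so that $L$ is replaced by $L+\omega_0$ (sectorial of negative type thanks to the spectral gap), establish maximal H\"older regularity for the linear inhomogeneous problem with lifted boundary datum, and then run a Banach contraction in $\mathscr{X}_{1+\alpha/2,2+\alpha}(+\infty,\omega_0)$ using the Lipschitz bounds \eqref{stima-F}--\eqref{stima-G} and the fact that $\mathcal{F},\mathcal{G}$ vanish quadratically at the origin. The only minor difference is in the boundary lifting: the paper uses an explicit spatial lifting operator $\mathcal{N}a=(a\zeta,0)$ and the resulting variation-of-constants decomposition, whereas you invoke the inhomogeneous resolvent estimate \eqref{estimate-boundary}; both are standard and lead to the same conclusion.
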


\begin{proof}
Since it is rather long, we split the proof into several steps. We provide the proof in the case when $\alpha\in (0,1)$, since in the case $\alpha=0$ is similar and even simpler. Hence, 
throughout the proof, $\alpha$ is arbitrarily fixed in $(0,1)$ and we use the notation introduced at the beginning of this section.

{\em Step 1}. Here, we prove that for every $T>0$, the Cauchy problem
\begin{equation}\label{FNLP-L}
\left\{
\begin{array}{ll}
\displaystyle \frac{d\bm{w}}{d t}(t,\cdot)= \LL\bm{w}(t,\cdot) + \bm{f}(t,\cdot), & t\in [0,T],\\[3mm]
({\mathcal B}\bm{w})(t)= (0,g(t)), & t\in [0,T],\\[1mm]
\bm{w}(0,\cdot)=\bm{w}_0,
\end{array}
\right.
\end{equation}
admits a unique solution $\bm{w}\in\mathscr{X}_{1+\alpha/2,2+\alpha}(T,0)$, for every $\bm{w}_0\in\mathscr{X}_{2_\alpha}$, every $\bm{f}=(f_1,f_2)\in {\mathscr X}_{\alpha/2,\alpha}(T,0)$ and for every function $g\in C^{\frac{1+\alpha}{2}}([0,T];\R)$, satisfying the compatibility conditions
\begin{eqnarray*}
{\mathcal B}\bm{w}_0=\bm{g}(0),\qquad\;\,B_0(\LL\bm{w}_0+f(0
))=0.
\end{eqnarray*}
Moreover, there exists a positive constant $C$ such that
\begin{equation}
\|{\bm w}\|_{\mathscr{X}_{1+\alpha/2,2+\alpha}(T,0)}\le C\big (\|\bm{w}_0\|_{\mathscr{X}_{2+\alpha}}+\|\bm{f}\|_{{\mathscr X}_{\alpha/2,\alpha}(T,0)}+\|g\|_{C^{1+\alpha}([0,T])}\big ),
\label{stima-xxx}
\end{equation}
where the constant $C$ depends on only on 
the quantity $M_k=\sup_{t\in [0,T+1]}\|t^kL^ke^{tL}\|_{L(\mathscr{X})}$ ($k=0,1,2$),
$M_{k,\beta}=\sup_{t\in [0,T+1]}\|t^{k-\beta}L^ke^{tL}\|_{L(D_L(\beta,\infty),\mathscr{X})}$ $(k=0,1,2,3)$  suitable choices of $\beta$.

For this purpose, we begin by introducing a lifting operator ${\mathcal N}$, defined by ${\mathcal N}a=(a\zeta,0)$ for every $a\in\R$, where $\zeta$ is a smooth function, which vanishes at
$x=\eta^*$ and satisfies the condition $\zeta'(\eta^*)=1$.
A straightforward computation reveals that ${\mathcal B}{\mathcal N}a=(0,a)$ for every $a\in\R$.

The candidate to be the solution to the above Cauchy problem is the function $\bm{w}$, defined by
\begin{align}
\bm{w}(t,\cdot)=e^{tL}\bm{w}_0+\int_0^te^{(t-s)L}\big (f(s,\cdot)+\LL{\mathcal N}g(s,\cdot))ds-L\int_0^te^{(t-s)L}({\mathcal N}g(s,\cdot))ds,
\label{pitbike}
\end{align}
for every $t\in [0,T]$.

To check all the above properties, we begin by introducing the function $\bm{z}:[0,T]\to\mathscr{X}$, defined by
\begin{eqnarray*}
\bm{z}(t)=\int_0^te^{(t-s)L}[{\mathcal N}(g(s))-{\mathcal N}(g(0))]ds,\qquad\;\,t\in [0,T].
\end{eqnarray*}

Since the function $t\mapsto {\mathcal N}(g(s))-{\mathcal N}(g(0))$ belongs to $C^{(1+\alpha)/2}([0,T];\mathscr{X}_2)$ which is continuously embedded into $C^{(1+\alpha)/2}([0,T];D_L(1/2,\infty))$, from
\cite[Theorem 4.3.16]{Lunardi96}, it follows that the function $\bm{z}$ belongs to $C^1([0,T];\mathscr{X})\cap C^2([0,T];D(L))$ and solves the equation $\bm{z}'(t)=L\bm{z}(t)+{\mathcal N}(g(t))-{\mathcal N}(g(0))$
for every $t\in [0,T]$. Moreover, $L\bm{z}$ belongs to $C^{1+\frac{\alpha}{2}}([0,T];\mathscr{X})$, $\bm{z}'$ is bounded in $[0,T]$ with values in $D_L(1+\alpha/2,\infty)$ and there exists a positive constant $C$, independent of $\bm{w}$ such that
\begin{equation}
\|\bm{z}\|_{C^1([0,T];\mathscr{X})}+\|L\bm{z}\|_{C([0.T];\mathscr{X})}+\|\bm{z}'\|_{B([0,T];D_L(1+\alpha/2,\infty))}\le C\|g\|_{C^{\frac{1+\alpha}{2}}([0,T])}
\label{stima-aaa}
\end{equation}

By interpolation, it follows that
$\bm{z}'$ is continuous with values in $D(L)$. Writing,
\begin{eqnarray*}
\bm{z}(t)-\bm{z}(t_0)=\int_{t_0}^t\bm{z}'(s)ds,\qquad\;\,t,t_0\in [0,T],
\end{eqnarray*}
and applying $L$ to both sides of the previous equation, we conclude that
\begin{eqnarray*}
L\bm{z}(t)-L\bm{z}(t_0)=\int_{t_0}^tL\bm{z}'(s)ds,\qquad\;\,t,t_0\in [0,T].
\end{eqnarray*}
Hence, function $L\bm{z}$ is differentiable in $[0,T]$ with values in $\mathscr{X}$ and $(L\bm{z})'=L\bm{z}'=\mathcal{L}L\bm{z}+{\mathcal L}{\mathcal N}(g)-{\mathcal L}{\mathcal N}(g(0))$.

Let us now introduce the function ${\bm w}_1=-L\bm{z}+\mathscr{N}(g(0))$. Since $(L\bm{w}_1)'\in C^{\frac{\alpha}{2}}([0,T];\mathscr{X})\cap B([0,T];D_L(\alpha/2,\infty))$, the function
$(L\bm{w}_1)'$ belongs to $\mathscr{X}_{\alpha/2,\alpha}(T,0)$. As a byproduct, $\bm{w}_1$ belongs to this space. Further, we observe that $L\bm{z}=\bm{z}'-{\mathcal N}(g)+{\mathcal N}(g(0))$, so that
$L\bm{z}$ is bounded in $[0,T]$ with values in ${\mathscr X}_{2+\alpha}$, due to the characterization of the interpolation space $D_L(1+\alpha/2,\infty)$ in Theorem \ref{thm-7.7}.
Moreover,
\begin{align*}
\bm{w}_1'(t)=-(L{\bm z})'(t)=-\mathcal{L}L\bm{z}(t)-{\mathcal L}{\mathcal N}(g(t))+{\mathcal L}{\mathcal N}(g(0))={\mathcal L}\bm{w}_1(t)-{\mathcal L}{\mathcal N}(g(t))
\end{align*}
for every $t\in [0,T]$.

To get the smoothness of the spatial derivatives of $\bm{w}_1=(w_1,w_2)$ with respect to the time variables, we write
\begin{eqnarray*}
w_1(t,x)-w_1(t_0,x)=\int_{t_0}^tD_tw_1(s,x)ds,\qquad\;\, t_0,t\in [0,T]\;\,x\in (-\infty,\eta^*],
\end{eqnarray*}
which allows us to easily prove that
\begin{align*}
\|w_{1,0}^{\sharp}(t,\cdot)-w_{1,0}^{\sharp}(t_0,\cdot)\|_{C^{\alpha}_b(I_-)}
\le &C|t-t_0|\sup_{s\in [0,T]}\|w_{1,0}^{\sharp}(t,\cdot)\|_{C^{\alpha}_b(I_-)}\\
\le &C|t-t_0|\|D_t{\bm w}_1\|_{{\mathscr X}_{\alpha/2,\alpha}(T,0)}
\end{align*}
for some positive constant $C$, independent of $\bm{w}_1$. In the same way we can show that
\begin{eqnarray*}
\|w_{2,0}^{\sharp}(t,\cdot)-w_{2,0}^{\sharp}(t_0,\cdot)\|_{C^{\alpha}_b(I_+)}
\le C|t-t_0|\|D_t{\bm w}_2\|_{{\mathscr X}_{\alpha/2,\alpha}(T,0)}.
\end{eqnarray*}

Since $\mathcal{X}_2$ belongs to the class $J_{1-\alpha/2}$ between ${\mathscr X}_{\alpha}$ and ${\mathscr X}_{2+\alpha}$, it follows that there exist positive constants $\widetilde C$ and $C_1$, independent of $\bm{w}_1$, such that
\begin{align*}
\|\bm{w}_1(t,\cdot)-\bm{w}_1(t_0)\|_{{\mathscr X}_2}\le &\widetilde C\|\bm{w}_1(t,\cdot)-\bm{w}_1(t_0)\|_{{\mathscr X}_{\alpha}}^{\frac{\alpha}{2}}\|\bm{w}_1(t,\cdot)-\bm{w}_1(t_0)\|_{{\mathscr X}_{2+\alpha}}^{1=\frac{\alpha}{2}}\\
\le & C_1\sup_{t\in [0,T]}\|\bm{w}_1(t,\cdot)\|_{{\mathscr X}_{2+\alpha}}^{1-\frac{\alpha}{2}}\|D_t\bm{w}_1\|_{\mathscr{X}_{\alpha/2,\alpha}(T,0)}|t-t_0|^{\frac{\alpha}{2}}
\end{align*}
for every $t,t_0\in [0,T]$. From estimate \eqref{stima-aaa} and the previous arguments it follows that
\begin{equation}
\|\bm{w}_1\|_{{\mathscr X}_{1+\alpha/2,2+\alpha}(T,0)}\le C\|g\|_{C^{\frac{1+\alpha}{2}}([0,T])}
\label{stimaaa-1}
\end{equation}
for some positive constant $C$, independent of $\bm{w}_1$.

Summing up, we have proved that $\bm{w}_1$ belongs to ${\mathscr X}_{1+\alpha/2,2+\alpha}(T,0)$. Moreover, since $\bm{z}'(t)\in D(L)$ for every $t\in [0,T]$ and $\bm{z}'=L\bm{z}+{\mathcal N}(g(t))-{\mathcal N}(g(0))=-\bm{w}_1+{\mathcal N}(g(t))$, it follows that
$0=\mathscr{B}(-\bm{w}_1(t)+{\mathcal N}(g(t))=-{\mathscr B}\bm{w}_1(t)+{\mathcal N}(g(t))=-{\mathscr B}\bm{w}_1(t)+g(t)$ for every $t\in [0,T]$. Therefore, $\bm{w}_1$ is a strict solution to the Cauchy problem
\begin{equation*}
\left\{
\begin{array}{ll}
\displaystyle \frac{d\bm{w}_1}{d t}(t,\cdot)= \LL\bm{w}_1(t,\cdot)-{\mathcal L}{\mathcal N}(g(t)), & t\in [0,T],\\[3mm]
({\mathcal B}\bm{w}_1)(t)= (0,g(t)), & t\in [0,T],\\[1mm]
\bm{w}_1(0,\cdot)=\bm{0}.
\end{array}
\right.
\end{equation*}

Finally, we consider the Cauchy problem
\begin{equation}
\left\{
\begin{array}{ll}
\displaystyle \frac{d\bm{w}_2}{d t}(t,\cdot)= \LL\bm{w}_2(t,\cdot)+\bm{f}+{\mathcal L}{\mathcal N}(g(t)), & t\in [0,T],\\[3mm]
({\mathcal B}\bm{w}_2)(t)= \bm{0}, & t\in [0,T],\\[1mm]
\bm{w}_2(0,\cdot)=\bm{w}_0-{\mathcal N}(g(0)),
\end{array}
\right.
\label{FNLP-L2}
\end{equation}

Since $\bm{u}_0\in {\mathscr X}_2$ and ${\mathcal B}(\bm{w}_0-{\mathcal N}(g(0)))={\mathcal B}\bm{w}_0-(0,g(0))=\bm{0}$, due to the compatibility conditions, it follows that $\bm{w}_0-{\mathcal N}(g(0))$ belongs to $D(L)$.
Moreover, $\bm{f}+{\mathcal L}{\mathcal N}(g(t))\in {\mathscr X}_{\alpha/2,\alpha}(T,0)=B([0,T];{\mathscr X}_{\alpha})\cap C^{\alpha/2}([0,T];\mathscr{X})$ and
$B_0({\mathcal L}(\bm{w}_0-{\mathcal N}(g(0)))+\bm{f}(0)+{\mathcal L}{\mathcal N}(g(0)))=B_0({\mathcal L}\bm{w}_0+\bm{f}(0))=0$, again by the compatibility conditions. By classical results for abstract Cauchy problems associated with sectorial operators, we conclude that problem \eqref{FNLP-L2} admits a unique classical solution $\bm{w}_2$ which actually belongs to $\mathscr{X}_{1+\alpha/2,2+\alpha}(T,0)$ and satisfies the estimate
\begin{eqnarray}
\|\bm{w}_2\|_{{\mathscr X}_{1+\alpha/2,2+\alpha}(T,0)}\le C_2(\|{\bm w}_0\|_{{\mathscr X}_{2+\alpha}}+\|\bm{f}\|_{\mathscr{X}_{\alpha/2,\alpha}(T,0)}).
\label{stima-bbb}
\end{eqnarray}

The function $\bm{w}=\bm{w}_1+\bm{w}_2$ is easily seen to be the (unique) strict solution to the Cauchy problem \eqref{FNLP-L} and, in view of estimates \eqref{stimaaa-1} and \eqref{stima-bbb},
\eqref{stima-xxx} follows at once. To complete this step, we observe that
\begin{align*}
\bm{w}_1(t)=&-L\int_0^te^{(t-s)L}({\mathcal N}(g(s))-{\mathcal N}(g(0)))ds+{\mathcal N}(g(0))\\
=&-L\int_0^te^{(t-s)L}{\mathcal N}(g(s))ds+L\int_0^te^{(t-s)L}{\mathcal N}(g(0))ds+{\mathcal N}(g(0))\\
=&-L\int_0^te^{(t-s)L}{\mathcal N}(g(s))ds+e^{tL}{\mathcal N}(g(0))
\end{align*}
for every $t\in [0,T]$. Moreover,
\begin{eqnarray*}
\bm{w}_2(t)=e^{tL}(\bm{u}_0-{\mathcal N}(g(0)))+\int_0^te^{(t-s)L}[\bm{f}(s)+{\mathcal L}{\mathcal N}(g(s))]ds
\end{eqnarray*}
for every $t\in [0,T]$. so that the function $\bm{w}=\bm{w}_1+\bm{w}_2$ is given by \eqref{pitbike}.

{\em Step 2.} In this step, we consider the nonlinear problem
\begin{equation}\label{FNLP-NL}
\left\{
\begin{array}{ll}
\displaystyle \frac{d\bm{w}}{d t}(t,\cdot)= \LL\bm{w}(t,\cdot) + {\mathcal F}(\bm{w}(t,\cdot)), & t\in [0,T],\\[3mm]
({\mathcal B}\bm{w})(t)= (0,{\mathcal G}(\bm{w}(t,\cdot))), & t\in [0,T],\\[3mm]
\bm{w}(0,\cdot)=\bm{w}_0,
\end{array}
\right.
\end{equation}
where ${\mathcal F}(\bm{w})$ and ${\mathcal G}(\bm{w})$ are defined by 
\eqref{Fscript-1} and \eqref{Gscript-1}.

By the results in Step 1, the solution to the previous problem is a fixed point of the operator
$\Gamma: {\mathscr Y}\to {\mathscr Y}$ defined by
\begin{align*}
(\Gamma(\bm{w}))(t,\cdot)=&e^{tL}\bm{w}_0+\int_0^te^{(t-s)L}\big ({\mathcal F}(\bm{w}(s,\cdot))+\LL{\mathcal N}{\mathcal G}(\bm{w}(s,\cdot)))ds-L\int_0^te^{(t-s)L}{\mathcal N}{\mathcal G}(\bm{w}(s,\cdot))ds
\end{align*}
for every $t\in [0,T]$, where ${\mathscr Y}=\overline{B(0,\rho)}\subset\mathscr{X}_{1+\alpha/2,2+\alpha}(\omega_0,\infty)$ endowed with the norm
of ${\mathscr X}_{1+\alpha/2,2+\alpha}(\omega_0,\infty)$, where $\rho>0$ needs to be properly fixed.

Note that
\begin{align*}
e^{\omega_0t}(\Gamma(\bm{w}))(t,\cdot)=&e^{t(L+\omega_0)}\bm{w}_0+\int_0^te^{(t-s)(L+\omega_0)}\big (e^{\omega_0 s}{\mathcal F}(\bm{w}(s,\cdot))+\LL{\mathcal N}(e^{\omega_0s}{\mathcal G}(\bm{w}(s,\cdot))))ds\\
&-L\int_0^te^{(t-s)(L+\omega_0)}{\mathcal N}(e^{\omega_0s}{\mathcal G}(\bm{w}(s,\cdot)))ds
\end{align*}

Using estimates \eqref{stima-F} and \eqref{stima-G} and \eqref{stima-xxx}, where the constant $C$ therein appearing can be made independent of $T$ since it depends only on 
$M_{k,T}$ $(k=0,1,2)$ and $M_{K,\beta,+\infty}$ $(k=0,1,2,3)$, which can be estimated from above by $=\sup_{t\ge 0}\|t^k(L+\omega_0)^ke^{t(L+\omega_0)}\|_{L(\mathscr{X})}$
and $\sup_{t\ge 0}\|t^{k-\beta}(L+\omega_0)^ke^{t(L-\omega)}\|_{L(D_L(\beta,\infty),\mathscr{X})}$, respectively (these constants are finite since the semigroup $(e^{t(L+\omega)})$ is of negative type)
it is immediate to check that $\Gamma$ is a contraction from ${\mathscr Y}$ into itself, provided that $\|\bm{w}_0\|_{{\mathscr X}_{2+\alpha}}\le\rho_0$ and $\rho_0$, $\rho$ are sufficiently small.
Hence, for every $\bm{w_0}\in\overline{B(0,\rho_0)}\subset\mathscr{X}_{2+\alpha}$, problem \eqref{FNLP-NL} admits a unique solution which belongs to 
$\mathscr{X}_{1+\alpha/2,2+\alpha}(\omega_0,+\infty)$, with $\mathscr{X}_{1+\alpha/2,2+\alpha}(\omega_0,+\infty)$-norm which does not exceed 
$\rho$. Using a standard unique continuation argument, it is easy to infer that $\bm{u}$ is is the unique solution which belongs to ${\mathscr X}_{1+\alpha/2,2+\alpha}(\omega_0,+\infty)$. Estimate \eqref{stima-finale} follows at once.
\end{proof}

\section{Convergence to the attenuated traveling wave}\label{stability_phi}

In this section, collecting all the previous results we go back to the original problem \eqref{equ_phi} and \eqref{interface_phi} and state the main result of the paper. To enlighten the notation given a function $\psi$, we write $\psi^{\sharp}$ both to denote the functions
$\frac{\psi}{q_L}$ and $\frac{\psi}{q_H}$. No confusion may arise since 
when we use the notation $\psi^{\sharp}$ on a left-halfline it means that we are considering the function $\frac{\psi}{q_L}$ and when the use the same notation on a right-halfline it means that we are considering the function $\frac{\psi}{q_H}$.

\begin{theorem}
For every $\omega_0\in \left (0,\min\left\{\frac{c_L^2}{8}(c_L^2-1)^2,\frac{c_H^2}{8}(c_H^2-1)^2\right\}\right )$ there exists $\rho_0>0$ such that, for each $\phi_0$ such that
the function $\phi_0^{\sharp}-K$ belongs to $C^{2+\alpha}_b((-\infty,\eta(0)])$ and to $C^{2+\alpha}_b([\eta(0),+\infty))$
and $\|\psi_0^{\sharp}-K\|_{C^{2+\alpha}_b((-\infty,\eta(0)])}+
\|\psi_0^{\sharp}-K\|_{C^{2+\alpha}_b([\eta(0),+\infty))}\le\rho_0$,
the Cauchy problem \eqref{equ_phi}, \eqref{interface_phi} admits a unique solution $(\phi,\eta)$, such that in the variable $t$ and $y=x+ct-\eta(t)+\eta^*$, $u$ belongs to ${\mathscr X}_{1+\alpha/2,2+\alpha}(\omega_0,+\infty)$. Moreover, there exists a positive constant $C$ such that
\begin{equation}
|s(t)+ct-\eta^*|\le Ce^{-\omega_0t}|\phi_0(\eta(0))-K(\eta(0))|,\qquad\;\,t\ge 0,
\label{main-1}
\end{equation}
and
\begin{align*}
\|\phi^{\sharp}(t,\cdot)-e^{-rt}K^{\sharp}\|_{C^{2+\alpha}_b((-\infty,s(t)])}\le &
Ce^{-\sigma_-t}
\big (\|\phi_0^{\sharp}-K^{\sharp}\|_{C^{2+\alpha}_b((-\infty,s(0)])}
+\|\phi_0^{\sharp}-K^{\sharp}\|_{C^{2+\alpha}_b([s(0),+\infty)}\bigg )
\end{align*}
and
\begin{align*}
\|\phi^{\sharp}(t,\cdot)-e^{-rt}K^{\sharp}\|_{C^{2+\alpha}_b([s(t),+\infty))}\le Ce^{-\sigma_+t}
\big (\|\phi_0^{\sharp}-K^{\sharp}\|_{C^{2+\alpha}_b((-\infty,s(0)])}
+\|\phi_0^{\sharp}-K^{\sharp}\|_{C^{2+\alpha}_b([s(0),+\infty)}\bigg )
\end{align*}
for every $t\ge 0$, where $\sigma_-=-\frac{1-C_L}{2}c+\omega_0+r$ and $\sigma_+=\frac{1-c_H}{2}c+\omega_0+r$ are positive numbers.
\end{theorem}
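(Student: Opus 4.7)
The strategy is to transfer the stability result of Theorem \ref{stability} back through the chain of substitutions $u = e^{rt}\phi$ (see \eqref{u-phi}), $\xi = x+ct$, $y = \xi - \eta(t) + \eta^*$, and $v = u - K = fK' + w$ (see \eqref{ansatz}) that led from \eqref{equ_phi}--\eqref{interface_phi} to the fully nonlinear problem \eqref{FNLP}, and then to convert the weighted estimate on $\bm{w}$ in the fixed-interface $y$-coordinate into a weighted $C^{2+\alpha}_b$-estimate on $\phi$ in the original $x$-coordinate.

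First, I reconstruct an initial datum for problem \eqref{FNLP} by setting $\hat u_0(y) = \phi_0(y + s(0) - \eta^*)$, $f(0) = s(0) - \eta^*$, and $\bm{w}_0 = \hat u_0 - K - f(0)K'$. The smallness hypothesis on $\phi_0^{\sharp} - K$ together with the Lipschitz character of $\Psi$ at the origin (Remark \ref{remark_linear}) yield $\|\bm{w}_0\|_{\mathscr{X}_{2+\alpha}} \le \rho$ for sufficiently small $\rho_0$. The free-interface conditions \eqref{interface_phi} at $t=0$ translate into ${\mathcal B}\bm{w}_0 = (0,{\mathcal G}(\bm{w}_0))$, as was essentially shown when deriving \eqref{interf2}--\eqref{interf3_bis}, while the remaining compatibility condition $B_0({\mathcal L}\bm{w}_0 + {\mathcal F}(\bm{w}_0)) = 0$ follows from the consistency of the PDE at the interface. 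Theorem \ref{stability} then produces a unique global solution $\bm{w}$ with $\|\bm{w}(t,\cdot)\|_{\mathscr{X}_{2+\alpha}} \le C e^{-\omega_0 t}\|\bm{w}_0\|_{\mathscr{X}_{2+\alpha}}$.

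Second, the inversion formula \eqref{invert} gives $f(t) = \Psi(w(t,\eta^*))$. Since $\Psi(0)=0$ and $\Psi$ is smooth near the origin, the pointwise bound $|f(t)| \le C|w(t,\eta^*)| \le Ce^{-\omega_0 t}\|\bm{w}_0\|_{\mathscr{X}_{2+\alpha}}$ follows, and recalling that $f(t) = s(t)+ct-\eta^*$, this establishes \eqref{main-1}. Third, unwinding the substitutions and using the identity $x - s(t) + \eta^* = (x+ct) - f(t)$, one obtains
\begin{align*}
\phi(t,x) - e^{-rt}K(x+ct) = e^{-rt}\bigl[K((x+ct)-f(t)) - K(x+ct) + f(t)K'((x+ct)-f(t)) + w(t,(x+ct)-f(t))\bigr].
\end{align*}
Taylor-expanding $K$ and $K'$ about $x+ct$ combines the first three terms inside the brackets into a quadratic remainder of size $O(|f(t)|^2) = O(e^{-2\omega_0 t})$, leaving $e^{-rt} w(t,(x+ct)-f(t))$ as the leading linear residual. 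The weighted $C^{2+\alpha}_b$-norm of this residual in the $x$-coordinate is then estimated by passing from the weights $q_J$ evaluated at $y = (x+ct)-f(t)$ to those evaluated at $x$ via the quotient $q_J(y)/q_J(x) = \exp(-\tfrac{1}{2}(c_J-1)(ct-f(t)))$ for $J \in \{L,H\}$, which together with the $e^{-rt}$ factor and the $e^{-\omega_0 t}$ decay of $\bm{w}$ produces the stated rates $\sigma_\pm$.

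The main obstacle is the careful bookkeeping of weights across the two coordinate changes: the $\mathscr{X}_{2+\alpha}$-weights are anchored to the fixed-interface $y$-coordinate, and re-expressing them in the original $x$-coordinate introduces the exponential-in-$t$ factors that are responsible for the $c$-dependence of $\sigma_\pm$. One must also control the full $C^{2+\alpha}_b$-regularity under the $t$-dependent shift $x \mapsto (x+ct)-f(t)$, whose $t$-dependence modifies the piecewise structure at the moving interface $s(t)$, and verify the compatibility conditions for $\bm{w}_0$ using the jump formulas for $K'$ and $K''$ derived in Section \ref{linearized_model}.
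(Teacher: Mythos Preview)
Your proposal is correct and follows essentially the same route as the paper: apply Theorem~\ref{stability} to the $\bm w$-problem, recover $f(t)=\Psi(w(t,\eta^*))$ to get \eqref{main-1}, and then undo the coordinate changes $y\to\xi\to x$ while tracking how the exponential weights $q_L,q_H$ transform, which produces the rates $\sigma_\pm$.

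The one genuine (minor) difference is how the $K$-shift is handled. The paper works with $\bm v=fK'+\bm w$ as a single object: since $|f(t)|\le Ce^{-\omega_0 t}$ and $K'$ has bounded weighted $C^{2+\alpha}$-norm, one gets $\|\bm v(t,\cdot)\|_{\mathscr X_{2+\alpha}}\le Ce^{-\omega_0 t}$ directly, and then passes from $y$ to $\xi$ using $q_J(\xi)=q_J(y)q_J(\eta(t)-\eta^*)$ (the second factor is bounded by \eqref{main-1}). You instead unpack $v$ and write $K(\xi-f)-K(\xi)+fK'(\xi-f)+w$, then Taylor-expand the first three terms to $O(f^2)$. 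This is correct but sharper than needed: the term $K(\xi-f)-K(\xi)$ alone is already $O(|f|)=O(e^{-\omega_0 t})$ in the weighted $C^{2+\alpha}$-norm, so the quadratic cancellation buys nothing for the stated estimate. The paper also inserts an explicit inversion of the map $\Lambda:w_0(\eta^*)\mapsto v_0(\eta^*)=w_0(\eta^*)+\gamma e^{\eta^*}(1-2\delta/\sigma_L^2)\Psi(w_0(\eta^*))$ to re-express the bound in terms of $\bm v_0$ (hence $u_0-K$ and ultimately $\phi_0-K$), which you subsume into your first paragraph; this step is routine but worth making explicit.
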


\begin{proof}
Throughout the proof, by $C$ we denote a positive constant, which can vary from line to line but it is independent of $t\ge 0$ and the functions that will appear.

To begin with, we observe that, by \eqref{ansatz} and\eqref{invert}, the smoothness of $\Psi$ and Theorem \ref{stability}, it follows that the function $\bm{v}=(v_1,v_2)$, where
\begin{equation}
v_j(t,\cdot)=\Psi(w(t,\eta^*))K'+ w_j(t,\cdot),\qquad\;\,t>0,\;\,j=1,2,
\label{piani-studio}
\end{equation}
belongs to $\mathscr{X}_{1+\alpha/2,2+\alpha}(\omega,+\infty)$ and
\begin{align}
\|\bm{v}(t,\cdot)\|_{\mathscr{X}_{2+\alpha}}
\le |\Psi(w_1(t,\eta^*))|\bigg (\bigg\|\frac{K'}{q_H}\bigg\|_{C^{2+\alpha}_b((-\infty,\eta^*])}
+\bigg\|\frac{K'}{q_L}\bigg\|_{C^{2+\alpha}_b([\eta^*,+\infty))}\bigg )
+Ce^{-\omega_0t}\|\bm{w}_0\|_{\mathscr{X}_{2+\alpha}}
\label{stima-v}
\end{align}
for every $t\ge 0$. Now, since
\begin{align*}
\Psi(w_1(t,\eta^*))=\Psi(w_1(t,\eta^*))-\Psi(0)
=w_1(t,\eta^*)\int_0^1\Psi'(\sigma w_1(t,\eta^*))d\sigma,\qquad\;\,t\ge 0
\end{align*}
we deduce that
$|\Psi(w_1(t,\eta^*))|\le Ce^{-\omega_0t}\|\bm{w}_0\|_{\mathscr X}$ for every $t\ge 0$.
Hence,
\begin{align*}
\|\bm{v}(t,\cdot)\|_{\mathscr{X}_{2+\alpha}}
\le Ce^{-\omega_0 t}\|\bm{w}_0\|_{\mathscr{X}_{2+\alpha}},\qquad\;\,t\ge 0.
\end{align*}
To get rid of the norm of $\bm{w}_0$ from the right-hand side of the previous estimate, we denote by $\bm{w}_0=(w_{0,1},w_{0,2})$ the value of $\bm{w}$ at $t=0$ and, similarly, by $\bm{v}_0=(v_{0,1},v_{0,2})$ we denote the value of $\bm{v}$ at $t=0$. From formula \eqref{piani-studio} we can infer that
$\bm{v}_0=\bm{w}_0+\Psi(w_{0,1}(\eta^*))K'$, 
so that
\begin{align*}
v_{0,1}(\eta^*)=w_{0,1}(\eta^*)+\Psi(w_{0,1}(\eta^*))K'(\eta^*)
=w_{0,1}(\eta^*)+\gamma e^{\eta^*}\left (1-\frac{2\delta}{\sigma_L^2}\right )\Psi(w_{0,1}(\eta^*)).
\end{align*}

Since $1-\frac{2\delta}{\sigma^2_L}=1-c_L>0$ by \eqref{admissible_bis}, the function $x\mapsto \Lambda(x):=x+\gamma e^{\eta^*}\left (1-\frac{2\delta}{\sigma^2_L}\right )\Psi(x)$ is strictly increasing in a neighborhood of the origin. Hence, $\Lambda$ is invertible in a neighborhood of the origin and we can write $w_{0,1}(\eta^*)=\Lambda^{-1}(v_{0,1}(\eta^*))$. 
We have so proved that 
\begin{eqnarray*}
\bm{w}_0=\bm{v}_0-\Psi(\Lambda^{-1}(v_{0,1}(\eta^*)))K'
\end{eqnarray*}
The functions $\Lambda^{-1}$ and $\Psi$ vanish at zero and are smooth in a neighborhood of the origin. From this remark, we obtain that
$\|\bm{w}_0\|_{\mathscr{X}_{2+\alpha}}\le C\|\bm{v}_0\|_{\mathscr{X}_{2+\alpha}}$, so that, from \eqref{stima-v}, we deduce that
\begin{align}
\|\bm{v}(t,\cdot)\|_{\mathscr{X}_{2+\alpha}}\le Ce^{-\omega_0t}\|\bm{v}_0\|_{\mathscr{X}_{2+\alpha}},\qquad\;\,t\ge 0.
\label{stima-vv}
\end{align}

Next, we go back to function $u$, by means of formula \eqref{form-v-f}, which shows that
$u(t,\xi)=v_1(t,\xi-\eta(t)+\eta^*)+K(\xi)$ if $t\ge 0$ and $\xi<\eta(t)$ and $u(t,\xi)=v_2(t,\xi-\eta(t)+\eta^*)+K(\xi)$ if $t\ge 0$ and $\xi>\eta(t)$.
Moreover, $\eta(t)-\eta^*=f(t)=\Psi(w_1(t,\eta^*))$ for every $t\ge 0$, again due to formula \eqref{invert}.
It turns out that $\eta\in C^{1+\alpha/2}_b([0,+\infty))$ and
\begin{equation}
|\eta(t)-\eta^*|\le C_4e^{-\omega_0t}|v_{0,1}(\eta^*)|=Ce^{-\omega_0t}|u_0(\eta(0))-K(\eta^*)|,\qquad\;\,t\ge 0.
\label{stima-eta}
\end{equation}
We have so proved estimate \eqref{main-1}.
Moreover, since $q_L(\xi)=q_L(\xi-\eta(t)+\eta^*)q_L(\eta^*-\eta(t))$ and a similar formula holds true with $q_L$ being replaced by $q_H$, we can infer that
\begin{eqnarray*}
\frac{u(t,\xi)}{q_L(\xi)}-\frac{K(\xi)}{q_L(\xi)}
=\frac{v_1(t,\xi-\eta(t)+\eta^*)}{q_L(\xi)}
=\frac{v_1(t,\xi-\eta(t)+\eta^*)}{q_H(\xi-\eta(t)+\eta^*)}q_L(\eta(t)-\eta^*),
\end{eqnarray*}
for every $t\ge 0$ and $\xi\le\eta(t)$, and
\begin{eqnarray*}
\frac{u(t,\xi)}{q_H(\xi)}-\frac{K(\xi)}{q_H(\xi)}
=\frac{v_1(t,\xi-\eta(t)+\eta^*)}{q_H(\xi-\eta(t)+\eta^*)}q_H(\eta(t)-\eta^*),
\end{eqnarray*}
functions $\frac{u}{q_L}$ and $\frac{u}{q_H}$ are smooth in space and time and 
\begin{align*}
&\|u^{\sharp}(t,\cdot)-K^{\sharp}\|_{C^{2+\alpha}_b((-\infty,\eta(t)])}\le \|v_1^{\sharp}(t,\cdot)\|_{C^{2+\alpha}_b((-\infty,\eta^*])}
q_L(\eta(t)-\eta^*)\le Ce^{-\omega_0t}\|\bm{v}_0\|_{{\mathscr X}_{2+\alpha}}\\
&\|u^{\sharp}(t,\cdot)-K^{\sharp}\|_{C^{2+\alpha}_b([\eta(t),+\infty)))}\le \|v_2^{\sharp}(t,\cdot)\|_{C^{2+\alpha}_b([\eta^*,+\infty))}
q_H(\eta(t)-\eta^*)\le Ce^{-\omega_0t}\|\bm{v}_0\|_{{\mathscr X}_{2+\alpha}}
\end{align*}
for every $t\ge 0$, where we took into account \eqref{stima-vv} and \eqref{stima-eta}, which shows that the functions $q_L(\eta-\eta^*)$ and $q_H(\eta-\eta^*)$ are bounded in $[0,+\infty)$. Since
\begin{eqnarray*}
\|\bm{v}_0\|_{\mathscr{X}_{2+\alpha}}\le C\big (\|u_0^{\sharp}-K^{\sharp}\|_{C^{2+\alpha}_b((-\infty,\eta^*])}
+\|u_0^{\sharp}-K^{\sharp}\|_{C^{2+\alpha}_b([\eta^*,+\infty)}\big ),
\end{eqnarray*}
where $u_0=u(0,\cdot)$, from the previous two estimate we can infer that
\begin{align*}
&\|u^{\sharp}(t,\cdot)-K^{\sharp}\|_{C^{2+\alpha}_b((-\infty,\eta(t)])}\le Ce^{-\omega_0t}\big (\|u_0^{\sharp}-K^{\sharp}\|_{C^{2+\alpha}_b((-\infty,\eta^*])}
+\|u_0^{\sharp}-K^{\sharp}\|_{C^{2+\alpha}_b([\eta^*,+\infty)}\big )\\
&\|u^{\sharp}(t,\cdot)-K^{\sharp}\|_{C^{2+\alpha}_b([\eta(t),+\infty)))}\le Ce^{-\omega_0t}\big (\|u_0^{\sharp}-K^{\sharp}\|_{C^{2+\alpha}_b((-\infty,\eta^*])}
+\|u_0^{\sharp}-K^{\sharp}\|_{C^{2+\alpha}_b([\eta^*,+\infty)}\big ).
\end{align*}

Finally, we go back to problem \eqref{equ_phi}, \eqref{interface_phi}. Using condition \eqref{u-phi}, it is immediate to check that
$s(t)=\eta(t)-ct$ fot every $t\ge 0$ and
$\phi(t,x)=v(t,x+ct)$ for every $t\ge 0$ and $x\in\R$. Therefore,
\begin{eqnarray*}
\frac{\phi(t,x)}{q_L(x)}-\frac{K(x)}{q_L(x)}=\frac{u(t,x+ct)}{q_L(x)}-\frac{K(x+ct)}{q_L(x+xt)}
=\bigg (\frac{u(t,x+ct)}{q_L(t,x+ct)}-\frac{K(t,x+ct)}{q_L(t,x+ct)}\bigg )q_L(ct)
\end{eqnarray*}
for every $t\ge 0$ and $x\le s(t)$ and
\begin{eqnarray*}
\frac{\phi(t,x)}{q_H(x)}-\frac{K(x)}{q_H(x)}=\frac{u(t,x+ct)}{q_H(x)}-\frac{K(x+ct)}{q_H(x+xt)}
=\bigg (\frac{u(t,x+ct)}{q_H(t,x+ct)}-\frac{K(t,x+ct)}{q_H(t,x+ct)}\bigg )q_H(ct)
\end{eqnarray*}
for every $t\ge 0$ and $x\le s(t)$.
It thus follows that 
\begin{align*}
\|\phi^{\sharp}(t,\cdot)-e^{-rt}K^{\sharp}\|_{C^{2+\alpha}_b((-\infty,s(t)])}\le &\|u^{\sharp}(t,\cdot)-K^{\sharp}\|_{C^{2+\alpha}_b((-\infty,\eta(t)]}q_L(ct)\\
\le &
Ce^{-\sigma_-t}
\big (\|\phi_0^{\sharp}-K^{\sharp}\|_{C^{2+\alpha}_b((-\infty,s(0)])}
+\|\phi_0^{\sharp}-K^{\sharp}\|_{C^{2+\alpha}_b([s(0),+\infty)}\bigg )
\end{align*}
and
\begin{align*}
\|\phi^{\sharp}(t,\cdot)-e^{-rt}K^{\sharp}\|_{C^{2+\alpha}_b([s(t),+\infty))}\le &\|u^{\sharp}(t,\cdot)-K^{\sharp}\|_{C^{2+\alpha}_b([s(t),+\infty))}q_L(ct)\\
\le &Ce^{-\sigma_+t}
\big (\|\phi_0^{\sharp}-K^{\sharp}\|_{C^{2+\alpha}_b((-\infty,s(0)])}
+\|\phi_0^{\sharp}-K^{\sharp}\|_{C^{2+\alpha}_b([s(0),+\infty)}\bigg )
\end{align*}
for every $t\ge 0$, where $\sigma_-=-\frac{1-C_L}{2}c+\omega_0+r$ and $\sigma_+=
\frac{1-c_H}{2}c+\omega_0+r$.

Since $c=r-\delta$ we can write
\begin{eqnarray*}
\sigma_-=r\frac{1-c_L}{2}+\omega_0+\frac{1-C_L}{2}\delta,
\end{eqnarray*}
\begin{eqnarray*}
\sigma_+=\frac{1-c_H}{2}c+\omega_0+r=\frac{3-c_H}{2}r+\omega+\frac{c_H-1}{2}\delta
\end{eqnarray*}
and the right-hand sides of the previous formulas are positive since we are assuming that $c_L\in (0,1)$ and $c_H\in (1,3]$ (see  \eqref{admissible_bis}). The proof is complete.
\end{proof}

\section*{acknowlegments}
C.-M. B. greatly acknowledges the School of Mathematical Sciences of Tongji University for the warm hospitality during his visiting position from 2019 to 2021. He also expresses his gratitude to the Department of Mathematics and Computer Sciences of the University of Parma. C.-M. B. and J. L. acknowledge partial support from National Natural Science Foundation of China (No. 12071349). Y. D. acknowledges partial support from National Natural Science Foundation of China (No. 12071333 \& No. 12101458). L. Lorenzi is member of G.N.A.M.P.A. of the Italian Istituto Nazionale di Alta Matematica (INdAM).

\end{document}